\documentclass[10pt]{article}
\usepackage{amsfonts}
\usepackage{graphicx}
\usepackage{amsmath}
\usepackage{amsthm}
\usepackage{amssymb}
\usepackage{amscd}
\usepackage{graphicx}
\usepackage{microtype}
\usepackage{enumerate}
\usepackage{pinlabel}
\usepackage[top=1.3in,bottom=1.6in,left=1.3in,right=1.3in]{geometry}

\theoremstyle{definition}
\newtheorem{theorem}{Theorem}[section]

\newtheorem{corollary}[theorem]{Corollary}

\newtheorem{example}[theorem]{Example}

\newtheorem{fact}[theorem]{Fact}
\newtheorem{lemma}[theorem]{Lemma}
\newtheorem{notation}[theorem]{Notation}

\newtheorem{observation}[theorem]{Observation}

\newtheorem{proposition}[theorem]{Proposition}
\newtheorem{remark}[theorem]{Remark}

\newtheorem{question}[theorem]{Question}
\newtheorem{definition}[theorem]{Definition}
\newtheorem*{dimthm}{Theorem \ref{main cor}}

\mathchardef\ordinarycolon\mathcode`\:
\mathcode`\:=\string"8000
\begingroup \catcode`\:=\active
  \gdef:{\mathrel{\mathop\ordinarycolon}}
\endgroup

\newcommand{\R}{\mathbb{R}}

\newcommand{\Z}{\mathbb{Z}}
\newcommand{\N}{\mathbb{N}}
\newcommand{\Q}{\mathbb{Q}}

\newcommand{\T}{\mathbb{T}}

\newcommand{\boldhead}[1]{%
 {\bigskip \noindent \bfseries #1 \\ }}

\DeclareMathOperator{\Diff}{Diff}
\DeclareMathOperator{\Homeo}{Homeo}

\DeclareMathOperator{\interior}{int}

\DeclareMathOperator{\supp}{supp}
\DeclareMathOperator{\fix}{Fix}

\bibliographystyle{plain}

\title{Homomorphisms between diffeomorphism groups}
\author{Kathryn Mann}
\date{}

\begin{document}

\maketitle

\begin{abstract}
For $r \geq 3$, $p \geq 2$, we classify all actions of the groups $\Diff_c^r(\R)$ and $\Diff^r_+(S^1)$ by $C^p$-diffeomorphisms on the line and on the circle.  This is the same as describing all nontrivial group homomorphisms between groups of compactly supported diffeomorphisms on 1-manifolds.   We show that all such actions have an elementary form, which we call \emph{topologically diagonal}.

As an application, we answer a question of Ghys in the 1-manifold case: if $M$ is any closed manifold, and $\Diff^\infty(M)_0$ injects into the diffeomorphism group of a 1-manifold, must $M$ be 1 dimensional?  We show that the answer is yes, even under more general conditions.  

Several lemmas on subgroups of diffeomorphism groups are of independent interest, including results on commuting subgroups and flows.  
\end{abstract}

\setcounter{section}{0}

\section{Introduction}

For a manifold $M$ (assumed connected and without boundary but not necessarily closed), let $\Diff_c^r(M)$ denote the group of compactly supported, orientation-preserving, $C^r$-diffeomorphisms of $M$ isotopic to the identity.  We study all nontrivial homomorphisms $\Phi: \Diff_c^r(M_1) \to \Diff_c^p(M_2)$ between two such diffeomorphism groups.  
Here are some easy examples to keep in mind.
\begin{example} \label{easy} \hspace{1pt}
\begin{enumerate}[a)]
\item If $M_1$ is an open submanifold of $M_2$,  there is a natural inclusion $\Diff_c^r(M_1) \hookrightarrow \Diff_c^r(M_2)$ by extending any compactly supported diffeomorphism of $M_1$ to be the identity on $M_2 \setminus M_1$.
\item A $C^r$-diffeomorphism $f: M_1 \to M_2$ induces an isomorphism $\Phi: \Diff_c^r(M_1) \to \Diff_c^r(M_2)$ defined by $\Phi(g) = fgf^{-1}$.   
\item Combining the two examples above, a diffeomorphism $f$ from $M_1$ to an open subset of $M_2$ can be used to define a natural injection $\Phi: \Diff_c^r(M_1) \hookrightarrow \Diff_c^r(M_2)$ by 
$$\Phi(g)(x) = \left\{ \begin{array}{rcl}  &f g f^{-1}(x) &  \mbox{if }x \in f(M_1) \\  &x & \mbox{otherwise} \end{array}\right.$$
If $f$ is instead $C^p$ for some $p < r$, then the image of $\Phi$ will lie in $\Diff_c^p(M_2)$.
\item Finally, there are some special cases where $M_1 \subset M_2$ is a non-open submanifold and diffeomorphisms of $M_1$ still extend to $M_2$ in a natural way.  This happens for example if $M_2 = M_1 \times N$ for a compact manifold $N$, or if $M_2$ is the unit tangent bundle of $M_1$.  
  
\end{enumerate}  
\end{example}

Our goal is to find conditions on $M_1$ and $M_2$ that guarantee the existence of nontrivial homomorphisms  $\Diff_c^r(M_1) \to \Diff_c^r(M_2)$,  and to describe as much as possible what these homomorphisms look like.  Note that even though all the examples of homomorphisms above are continuous, we make no assumptions on continuity. 

The analogous question for \emph{isomorphisms} rather than \emph{homomorphisms} between diffeomorphism groups is completely answered by a theorem of Filipkiewicz. It says that case b) of Example \ref{easy} is all that can occur.

\begin{theorem}[Filipkiewicz, \cite{Fi}] \label{fil}
Let $M_1$ and $M_2$ be smooth manifolds and suppose there is an isomorphism $\Phi: \Diff^r_c(M_1) \to \Diff^p_c(M_2)$. Then $M_1$ and $M_2$ are diffeomorphic.  In fact, more is true: $r = p$ and $\Phi$ is induced by a $C^r$-diffeomorphism $f: M_1 \to M_2$.  
\end{theorem}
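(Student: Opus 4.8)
The plan is to reconstruct each manifold, together with its smooth structure, from the abstract group $\Diff^r_c(M)$, and to show that this reconstruction is canonical enough that an isomorphism $\Phi$ is forced to be geometric. Write $G := \Diff^r_c(M)$.

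\textbf{Step 1: recovering the underlying topological space algebraically.} The first goal is to recover $M$ as a topological space from $G$. The guiding observation is that diffeomorphisms with disjoint supports commute, and more usefully, for an open set $U \subseteq M$ the \emph{support subgroup} $G_U := \{ g \in G : \supp g \subseteq U \}$ satisfies $[G_U, G_V] = 1$ whenever $U \cap V = \emptyset$, while fragmentation (every element of $G_U$ is a product of diffeomorphisms supported in coordinate balls inside $U$) together with the high transitivity of $G_U$ on $U$ gives $G_U \subseteq G_V \iff U \subseteq V$ for regular open $U,V$. Following Whittaker and Filipkiewicz (equivalently, invoking Rubin's reconstruction theorem, whose hypotheses $\Diff^r_c(M)$ satisfies because the groups $G_U$ act "locally densely"), one then characterizes the subgroups of the form $G_U$ purely in terms of the group law, via a condition phrased through commuting elements and the subgroup order; simplicity of $\Diff^r_c(M)$ (Mather, Thurston), applied inside each $G_U \cong \Diff^r_c(U)$, is what excludes exotic subgroups that would otherwise spoil the characterization. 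Because the characterization is algebraic, $\Phi$ must carry support subgroups to support subgroups, hence induces an order- and disjointness-preserving bijection between the lattices of regular open sets of $M_1$ and $M_2$; this yields a homeomorphism $f_0 : M_1 \to M_2$ (so in particular $\dim M_1 = \dim M_2 =: n$ by invariance of domain) with $\Phi(g) = f_0 g f_0^{-1}$ for every $g$.

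\textbf{Step 2: upgrading $f_0$ to a diffeomorphism and forcing $r = p$.} Now $f_0$ is only a homeomorphism, but it conjugates $\Diff^r_c(M_1)$ onto $\Diff^p_c(M_2)$, and $f_0^{-1}$ conjugates $\Diff^p_c(M_2)$ onto $\Diff^r_c(M_1)$. Work in a chart identified with an open subset of $\R^n$. For each coordinate direction, choose a compactly supported $C^r$ vector field agreeing with $\partial_i$ on a ball and let $\phi^i_t \in \Diff^r_c(M_1)$ be its flow; then $\psi^i_t := f_0\,\phi^i_t\,f_0^{-1}$ is a continuous one-parameter subgroup of $\Diff^p_c(M_2)$, hence (Bochner--Montgomery / Montgomery--Zippin for one-parameter groups of diffeomorphisms) its orbits are $C^p$ curves depending $C^p$-smoothly on the base point. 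Since $f_0$ sends the straight orbit segments of $\phi^i$ to orbits of $\psi^i$ compatibly with the time parameter, i.e.\ $f_0(x + t e_i) = \psi^i_t(f_0(x))$, the map $f_0$ has a continuous partial derivative in the direction $e_i$; letting $i$ range over a basis gives $f_0 \in C^1$. A bootstrap, re-running this after pushing vector fields forward by the now-$C^1$ map and differentiating the conjugacy identity, raises the regularity of $f_0$ one derivative at a time, giving $f_0 \in \Diff^{\min(r,p)}$; applying the same to $f_0^{-1}$ and comparing the regularity of pushed-forward vector fields on each side then forces $r = p$ and $f_0 \in \Diff^r(M_1,M_2)$.

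\textbf{Main obstacle.} The conceptual heart, and the hardest step, is Step 1: producing a group-theoretic characterization of the support subgroups robust to the fact that $\Phi$ is a priori only an abstract isomorphism with no continuity assumed. This is precisely where simplicity, fragmentation, and the commuting structure must be combined carefully, and it is the substance of Whittaker's and Filipkiewicz's reconstruction arguments (or of Rubin's theorem); one must also mind the exceptional exponent $r = n+1$, where simplicity of $\Diff^r_c$ is not known. Step 2 is analytically delicate as well — the interplay of flows, orbits, and differentiability, and the bootstrap at low regularity — but it is essentially routine given the classical literature on one-parameter subgroups of diffeomorphism groups.
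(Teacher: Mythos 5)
This statement is quoted background: the paper does not prove Filipkiewicz's theorem, it cites \cite{Fi}, so there is no in-paper proof to match your argument against. Judged on its own terms, your outline is a correct high-level account of how such reconstruction theorems are proved, but it follows a genuinely different route in Step 1 from the one Filipkiewicz actually takes (and which the paper summarizes in Section 2): Filipkiewicz works with \emph{point stabilizers}, characterized algebraically among maximal subgroups, and builds the point-to-point map $f_0$ directly from $\Phi(\stab(x)) = \stab(f_0(x))$; you instead work with the \emph{support subgroups} $G_U$ and the lattice of regular open sets, which is closer in spirit to Rubin's later reconstruction theorem. Both routes are viable; the stabilizer approach is what is in \cite{Fi}, while the lattice approach is more general (it applies to locally dense actions with much less structure) at the cost of a heavier characterization lemma. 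Your Step 2 (Bochner--Montgomery on the conjugated one-parameter groups, bootstrap, then comparison of regularities to force $r=p$) is essentially the standard and correct argument; note that the continuity in $t$ of $\psi^i_t = f_0\phi^i_t f_0^{-1}$, which you need before invoking Bochner--Montgomery, comes for free only once $f_0$ is already known to be a homeomorphism, so the order of the two steps matters. Two caveats: the entire substance of Step 1 --- the purely group-theoretic characterization of the distinguished subgroups --- is outsourced to citations rather than argued, which is the actual content of the theorem; and your worry about the exceptional exponent $r = n+1$ is misplaced for this particular theorem, since Filipkiewicz's statement covers all $1 \le r, p \le \infty$ and his argument does not route through simplicity of $\Diff^r_c(M)$.
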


Filipkiewicz's original statement was for isomorphisms between the identity components of the full diffeomorphism groups, but his theorem holds for the group of compactly supported diffeomorphisms as well.  More recently, Filipkiewicz's theorem has been generalized to isomorphisms between other groups of diffeomorphisms (e.g. symplectomorphisms, contact diffeomorphisms) by Banyaga, Rybicki, and others (see for example \cite{Ba2}, \cite{Ry}).   The spirit of all these results is the same: given a ``sufficiently large" class of diffeomorphisms $\mathcal{C}$, the existence of an isomorphism between $\mathcal{C}(M_1)$ and $\mathcal{C}(M_2)$ implies that $M_1$ and $M_2$ are diffeomorphic and the isomorphism is induced by a diffeomorphism.  

By contrast, we know almost nothing about homomorphisms between diffeomorphism groups.  As an example of our ignorance, consider the following question of Ghys.  Here $\Diff^{\infty}(M)_0$ denotes the identity component of the group of $C^\infty$ diffeomorphisms of $M$.

\begin{question}[Ghys, \cite{Gh}] \label{ghq}
Let $M_1$ and $M_2$ be closed manifolds, and suppose that there is an (injective) homomorphism $\Diff^{\infty}(M_1)_0 \to \Diff^{\infty}(M_2)_0$.  Does it follow that $\dim(M_1) \leq \dim(M_2)$?
\end{question}

In fact, $\Diff^{\infty}(M)_0$ is a simple group, so any nontrivial homomorphism is necessarily injective.  That $\Diff^{\infty}(M)_0$ is simple follows from a deep result due to Mather and Thurston, nontrivial even in the case of 1-manifolds (!).  Simplicity holds for a larger class of groups as well; for any connected manifold $M$ and any $r \neq \dim(M) + 1$, the group $\Diff^r_c(M)_0$ is simple (\cite{Ma1}, \cite{Ma2}).  Thus, it makes sense to ask a more general version of Question \ref{ghq} for noncompact manifolds, replacing $\Diff^\infty(M_i)_0$ with $\Diff^r_c(M_i)$.  

Ghys' question appeared in print in 1991, and to our knowledge, this paper is the first to give even a partial answer in a special case -- this will be a corollary of our Theorem \ref{main cor} below.  
However, since publication of this paper S. Hurtado and S. Matsumoto have made very promising further progress.  In \cite{Hu}, Hurtado gives a complete answer to Ghys' question (in its original form) in the affirmative.  His results use the assumption of $C^\infty$ smoothness of diffeomorphisms in an essential way.   In \cite{Mat}, Matsumoto answers our more general version of Ghys' question under the assumption that $\dim(M_2) = 1$ and assuming only $C^r$ regularity of diffeomorphisms ($r \geq 2$), without the classification of homomorphisms that we give in Theorem \ref{main cor}.   See also \cite{Mat2} for a special case treating homeomorphisms.

\boldhead{Our results}
We study the special case of homomorphisms $\Phi: \Diff_c^r(M_1) \to \Diff_c^p(M_2)$ when $M_2$ is a 1-manifold.  The reader will see the difficulty of the problem and some of the richness in examples already apparent at the 1-dimensional level.  However, we are able to give a complete answer to the question of which manifolds $M_1$ admit nontrivial homomorphisms $\Phi: \Diff_c^r(M_1) \to \Diff_c^r(M_2)$ or 
$\Phi: \Diff^r(M_1)_0 \to \Diff_c^r(M_2)$ for some 1-manifold  $M_2$, and describe precisely what all such homomorphisms look like.  
Essentially, $M_1$ must be 1-dimensional, and $\Phi$ described by a slight generalization of the obvious embedding of diffeomorphism groups given in Example \ref{easy} c).  This generalization is constructed by taking multiple embeddings $M_1 \hookrightarrow M_2$ and having diffeomorphisms of $M_1$ act ``diagonally" on $M_2$.  We call this a \emph{topologically diagonal embedding}.  

\begin{definition}[\textbf{Topologically diagonal embedding}]
Let $M_1$ and $M_2$ be manifolds, and suppose $\{ f_1, f_2, f_3, ...\}$ is a collection (finite or infinite) of $C^r$ embeddings $M_1 \to M_2$ whose images are pairwise disjoint and contained in some compact subset of $M_2$.  Define $\Phi: \Diff_c^r(M_1) \to \Diff_c^r(M_2)$ by 
$$\Phi(g)(x) = \left\{ \begin{array}{rcl}  &f_i g f_i^{-1}(x) &  \mbox{if }x \in f_i(M_1) \\  &x & \mbox{otherwise} \end{array}\right.$$
A $C^r$-\emph{topologically diagonal embedding} is a homomorphism $\Phi: \Diff_c^r(M_1) \to \Diff_c^r(M_2)$ obtained by this construction. 

A \emph{topologically diagonal embedding} is an embedding $\Phi: \Diff_c^r(M_1) \to \Diff_c^p(M_2)$ obtained by the same construction when the embeddings $f_i$ are only required to be continuous. 
\end{definition}

\noindent Our main theorem says that these define basically all homomorphisms between 1-manifold diffeomorphism groups.

\begin{theorem} \label{main}
Let $r \geq 3$, $p \geq 2$ and let $M_1$ and $M_2$ be 1-manifolds.  Every homomorphism $\Phi: \Diff_c^r(M_1) \to \Diff^p_c(M_2)$ is topologically diagonal.  If $r=p$, then $\Phi$ is $C^r$-topologically diagonal. 
\end{theorem}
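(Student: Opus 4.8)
The plan is to reduce to the cases $M_1, M_2 \in \{\R, S^1\}$ and to recover the embeddings $f_i$ from the internal algebraic structure of $\Diff^r_c(M_1)$, visible after transporting it via $\Phi$ into $\Diff^p_c(M_2)$. The key idea is that "fragmentation data" is algebraic: for an open set $U \subset M_1$, the subgroup $\Diff^r_c(U) \le \Diff^r_c(M_1)$ is characterized (up to the subtleties the paper must handle) by algebraic properties — being a maximal subgroup of a certain kind, or being the centralizer of some other subgroup, etc. So I would first build a dictionary between small open intervals $J \subset M_1$ and their images $\Phi(\Diff^r_c(J)) \le \Diff^p_c(M_2)$, and show each such image is supported (in $M_2$) on a set that behaves like a disjoint union of intervals — this is where the topologically diagonal structure is born, one "strand" $f_i(J)$ for each component of the support. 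Compatibility of these supports as $J$ ranges over a basis of intervals, together with a Mayer–Vietoris / fragmentation argument using that $\Diff^r_c(M_1)$ is generated by the $\Diff^r_c(J)$ and is simple (or perfect), should then assemble the $f_i$ into globally defined continuous embeddings $M_1 \to M_2$ with disjoint images in a compact set.

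Concretely, the first major step is to understand $\Phi$ restricted to $\Diff^r_c(J)$ for $J$ a small interval. I expect to use: (i) that $\Diff^r_c(J)$ is perfect and simple, so $\Phi$ is injective on it or trivial on it — and a connectedness/normality argument across overlapping intervals forces injectivity everywhere once $\Phi$ is nontrivial; (ii) a structure result (one of the "lemmas of independent interest" on commuting subgroups and flows advertised in the abstract) saying that if $A, B \le \Diff^r_c(J)$ have disjoint supports then $\Phi(A), \Phi(B)$ have disjoint supports in $M_2$ — in one dimension "commuting" is very close to "disjointly supported," so commuting subgroups in the source must go to essentially disjointly supported subgroups in the target. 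Iterating (ii) over a chain of nested and disjoint subintervals pins down, for each point $x \in M_1$, a well-defined closed subset $\Gamma_x \subset M_2$ — the "support of the germ at $x$" — and shows $x \mapsto \Gamma_x$ is upper semicontinuous with the $\Gamma_x$ disjoint for distinct $x$ and their union relatively compact. The components of $\bigcup_x \Gamma_x$, suitably indexed, become the images $f_i(M_1)$.

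The second major step is to promote this set-level picture to genuine embeddings $f_i$ and to verify regularity. For topological-diagonality it suffices to get continuous injections; here I would use that the action of $\Diff^r_c(J)$ on each strand is conjugate to its standard action (by a rigidity/uniqueness argument: a faithful action of $\Diff^r_c(\R)$ on an interval realizing the correct germ structure must be the standard one up to a homeomorphism — this is a Rubin-type or Whittaker-type reconstruction, and it is the conceptual heart). That conjugating homeomorphism, read off on each strand and checked to be compatible over overlapping $J$'s, is $f_i$. When $r = p$, one upgrades $f_i$ from continuous to $C^r$ by a bootstrapping argument: the conjugacy intertwines $C^r$ flows on the source with $C^p = C^r$ flows on the target, and differentiability of a conjugacy between flows/actions with enough regularity (using $r \ge 3$, which gives room for a Kopell-lemma or Sternberg-linearization type step) forces $f_i \in C^r$. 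Finally one checks the $M_1 = S^1$ and $M_2 = S^1$ cases separately — compactness of $S^1$ makes "contained in a compact set" automatic but introduces a rotation number / finite-cover subtlety that has to be ruled out or absorbed into the $f_i$.

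The main obstacle, I expect, is the rigidity step: showing that the abstractly reconstructed action of $\Diff^r_c(J)$ on a single strand really is topologically conjugate to the standard action, with no exotic faithful actions on the interval appearing. This is delicate because $\Phi$ is not assumed continuous, so one cannot use Montgomery–Zippin–style automatic continuity directly; one must instead extract continuity of the strand action from purely algebraic input (e.g. identifying one-parameter subgroups algebraically as "maximal divisible abelian" or as centralizers, then showing their orbits vary continuously), and simultaneously rule out that a strand could be a point (degenerate $f_i$) or that infinitely many strands could accumulate badly. Controlling the global topology of $\bigcup \Gamma_x$ — ensuring the strands are honest sub-one-manifolds and not, say, a Cantor-like set — is the place where the hypotheses $r \ge 3$, $p \ge 2$, and the one-dimensionality of $M_2$ all get used at once, via Kopell's lemma and Hölder/Denjoy-type constraints on centralizers of $C^r$ diffeomorphisms.
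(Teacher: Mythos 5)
Your overall architecture---localize via fragmentation, use commuting subgroups to transport disjointness of supports, build the $f_i$ strand by strand, and upgrade regularity at the end---matches the paper's, and your instinct that Filipkiewicz's maximal-subgroup trick must be replaced by centralizers is exactly right. But the proposal defers the two hardest steps to tools that are either not available or not actually the mechanism, and these are genuine gaps rather than omitted routine detail.

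First, the ``rigidity'' step. You propose to show that the action of $\Diff^r_c(J)$ on a single strand is standard by a Rubin/Whittaker-type reconstruction theorem. Those theorems reconstruct the space from a group of homeomorphisms satisfying strong hypotheses (local density of orbits, etc.), and for an a priori discontinuous homomorphic image you have no way to verify those hypotheses without already knowing what you are trying to prove. The paper avoids reconstruction entirely: to each $x$ it associates $A_x = \fix\bigl(\Phi(G^x)\bigr)$ where $G^x = G^{(-\infty,x]}\times G^{[x,\infty)}$ is a \emph{direct product} of two commuting nonabelian subgroups (this product structure, via Corollary \ref{simpler prop cor}, is what guarantees $A_x \neq \emptyset$), shows the points of $A_x$ are isolated, and then \emph{defines} $f(y) = \Phi(\phi^s)(z)$ by pushing a single point $z \in A_x$ around with images of flows. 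Your $\Gamma_x$ (intersection of supports over shrinking $J$) is the dual object, but you would still need to rule out that a strand degenerates to a point or a Cantor set, and nothing in the proposal does that.

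Second, and more fundamentally, the continuity of the image of a flow is asserted (``showing their orbits vary continuously'') rather than proved, and this is the engine of the whole construction: without it, Example \ref{noncont} shows the map $t \mapsto \Phi(\phi^t)$ could be wildly discontinuous and no $f_i$ can be built. The paper's mechanism has two parts you would need to supply: (a) Szekeres' theorem, which says the centralizer of a fixed-point-free element of $\Diff^2_+(I)$ is exactly a $C^1$ flow $\{\psi^t\}$, so that $\Phi(\phi^t) = \psi^{\alpha(t)}$ for some additive $\alpha:\R\to\R$; and (b) an algebraic detection of whether $\psi^{\alpha(t)}(U)$ meets $U$, namely that $U\cap V$ contains an open set iff $\langle G^U, G^V\rangle$ has nonabelian centralizer (via Proposition \ref{simpler prop}), which converts the continuity of $\phi^t$ in the source into continuity of $\alpha$. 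Your suggestion to identify one-parameter subgroups in the \emph{target} intrinsically (``maximal divisible abelian'') is both harder and unnecessary; the point is to start from a flow in the source and trap its image inside a genuine flow in the target. Until (a) and (b) are in place, the passage from ``supports behave like disjoint unions of intervals'' to ``there exist continuous embeddings $f_i$'' does not go through.
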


\noindent We also have a stronger version.
  
\begin{theorem} \label{main cor}
Let $r \geq 3$, $p \geq 2$, let $M_1$ be any manifold and $M_2$ a 1-manifold.  Suppose that $\Phi: \Diff_c^r(M_1) \to \Diff_c^p(M_2)$ is an injective homomorphism.  Then $\dim(M_1) = 1$ and $\Phi$ is topologically diagonal.  If $r=p$, then $\Phi$ is $C^r$-topologically diagonal. 
\end{theorem}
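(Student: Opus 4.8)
The plan is to derive this from Theorem \ref{main}: it suffices to show $\dim(M_1)=1$, since then $M_1$ is a $1$-manifold and Theorem \ref{main} gives the conclusion (the injectivity hypothesis only strengthens it). So suppose, for contradiction, that $n:=\dim(M_1)\geq 2$.

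\emph{Reduction to Euclidean space.} Fix an embedded open ball $U\subseteq M_1$, so that an isomorphic copy of $\Diff_c^r(\R^n)\cong\Diff_c^r(U)$ sits inside $\Diff_c^r(M_1)$; then $\Phi$ restricts to an \emph{injective}, hence nontrivial, homomorphism $\Psi:\Diff_c^r(\R^n)\to\Diff_c^p(M_2)$. (By fragmentation the subgroups $\Diff_c^r(U)$ in fact generate $\Diff_c^r(M_1)$, so the problem is equivalent to ruling out such a $\Psi$.) It is therefore enough to show that a nontrivial homomorphism $\Diff_c^r(\R^n)\to\Diff_c^p(M_2)$ produces a continuous embedding $\R^n\hookrightarrow M_2$ --- which is absurd, since an open subset of $\R^n$ with $n\geq 2$ is not homeomorphic to an open subset of a $1$-manifold (invariance of domain).

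\emph{Producing the embedding.} The key point is that essentially all of the work in the proof of Theorem \ref{main} --- the lemmas on commuting subgroups and on $C^r$ flows, the step that locates the ``support'' of the homomorphism inside $M_2$, and the Filipkiewicz-type recognition of the local model on each piece of that support (cf.\ Theorem \ref{fil}) --- does not use that $M_1$ is $1$-dimensional. These inputs remain available for $\Diff_c^r(\R^n)$, which carries an abundance of commuting conjugate copies of itself and of compactly supported $C^r$ flows with prescribed support, and from which the underlying manifold $\R^n$ can be reconstructed abstractly. Running this analysis for $\Psi$ should show that $\Psi$ is topologically diagonal in the natural sense: determined by a finite or countable family of continuous --- and $C^p$ if $r=p$ --- embeddings $\R^n\hookrightarrow M_2$ with pairwise disjoint images in a compact set, acting by conjugation on each. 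For $n\geq 2$ this family must be empty, so $\Psi$ is trivial, a contradiction; hence $\dim(M_1)=1$ and Theorem \ref{main} finishes the proof.

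\emph{The main obstacle.} The crux is exactly the adaptation just sketched: checking that each step of the proof of Theorem \ref{main} either survives verbatim for $\Diff_c^r(\R^n)$ or can be reproved using only the structure this group genuinely has --- simplicity and perfectness (for $r\neq n+1$), commuting subgroups, flows, and group-theoretic reconstruction of the ambient manifold --- all the while keeping the target regularity at $C^p$, $p\geq 2$, so that Kopell's lemma and the flow lemma still apply. An alternative, more self-contained route for this step is to exhibit a single finitely generated subgroup of $\Diff_c^r(\R^n)$, built from a compactly supported diffeomorphism whose recurrence is genuinely two-dimensional together with a non-abelian diffeomorphism subgroup it suitably interacts with, that admits no faithful $C^p$ action on any $1$-manifold; isolating such a configuration and proving the non-embedding would then be the heart of the matter.
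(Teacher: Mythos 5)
Your opening reduction --- it suffices to prove $\dim(M_1)=1$ and then invoke Theorem \ref{main}, and for this it suffices to rule out an injective homomorphism from $\Diff_c^r$ of an embedded ball --- is exactly how the paper begins. But the heart of the matter, the actual obstruction to such a homomorphism when $n\geq 2$, is missing from your proposal. You propose to rerun the proof of Theorem \ref{main} with domain $\R^n$ and then discard the resulting embeddings $\R^n\hookrightarrow M_2$ by invariance of domain, but several steps of that proof use $\dim(M_1)=1$ on the \emph{domain} side in an essential way: the subgroup $G^x=\langle G^{(-\infty,x]},G^{[x,\infty)}\rangle$ splits as a direct product of two commuting nonabelian subgroups precisely because a point disconnects $\R$, and it is this product structure, via Corollary \ref{simpler prop cor}, that yields $A_x=\fix(\Phi(G^x))\neq\emptyset$ and lets one build the equivariant map point by point. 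For $n\geq 2$ there is no analogous decomposition, and nonemptiness of the candidate sets is exactly where the construction must break down (it has to fail somewhere, since the embedding you aim to produce cannot exist). Your own text concedes that this adaptation is ``the crux'' without carrying it out, so the proposal as written does not contain a proof.

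The paper's actual mechanism is different and much more elementary. By uniqueness of the flow in Szekeres' theorem, every $g\in\Diff_c^p(\R)$ satisfies $C(g)=C(g^2)$ (Lemma \ref{cent lemma}); but in $\Diff_c^r(B^n)$ with $n\geq 2$ one can choose $g$ acting as rotation by $\pi$ on an embedded cylinder $B^2\times[0,1]^{n-2}$, so that $g^2$ is the identity on the cylinder and some $h$ supported there commutes with $g^2$ but not with $g$. Applying $\Phi$ contradicts the lemma, settling the case $M_2=\R$. For $M_2=S^1$, where $C(g)=C(g^2)$ genuinely fails (rotations), the paper first applies Theorem \ref{main} and Proposition \ref{finitelymany} to the image of a copy $H\cong\Diff_c^r(\R)$ inside $\Diff_c^r(B^2)$ acting on the radii of an annulus, obtaining finitely many components $N$ of $S^1\setminus\fix(\Phi(H))$, and then repeats the rotation trick with angle $\pi/N!$ to reduce to the interval case. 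This concrete ``finite order after squaring'' obstruction, valid for compactly supported diffeomorphisms in dimension $\geq 2$ but impossible for fixed-point-free interval diffeomorphisms, is the idea your proposal lacks; without it (or a worked-out substitute such as the finitely generated subgroup you allude to at the end), the argument is incomplete.
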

\noindent In particular, this answers the 1-manifold case of Ghys' Question (\ref{ghq}) in the affirmative.  

\boldhead{The algebra--topology link}
There are two points to note regarding Theorems \ref{main} and \ref{main cor}.  First, our results parallel that of Filipkiewicz by establishing a relationship between the topology of a manifold and the algebraic structure of its group of diffeomorphisms.  In fact, our proof strategy will be to gradually pin down this relationship as closely as possible.  

Second, our results say that the richness of the algebraic structure of diffeomorphism groups \emph{forces} continuity: group homomorphisms $\Phi: \Diff_c^r(M_1) \to \Diff^p_c(M_2)$, where $M_2$ is a 1-manifold, are \emph{continuous} (assuming appropriate constraints on $p$ and $r$).  This need not be true if $\Diff_c^r(M_1)$ is replaced with another topological group!  Consider the following example.

\begin{example}[\textbf{A non-continuous, injective group homomorphism} $\R \to \Diff^r_c(M)$] \label{noncont}
Let $M$ be a manifold, and let $\alpha: \R \to \R$ be a non-continuous, injective additive group homomorphism.  Such a homomorphism may be constructed by permuting the elements of a basis for $\R$ over $\Q$ and extending linearly to $\R$.  Then for any compactly supported $C^r$ flow $\psi^t$ on $M$, the map $t \mapsto \psi^{\alpha(t)}$ is a non-continuous, injective homomorphism from $\R$ to $\Diff^r_c(M)$.   

For an even more pathological example, consider a second flow $\phi^t$ with support disjoint from $\psi^t$.  Then 
$$ t \mapsto \left\{ \begin{array}{rcl} \psi^{t} & \mbox{for} & t \in \Q 
\\ \phi^{\alpha(t)} & \mbox{for} & t \notin \Q \\ 
\end{array}\right.$$
also gives a non-continuous, injective homomorphism from $\R$ to $\Diff^r_c(M)$.  
\end{example}

A key step in our proof will be showing that this example cannot occur within the context of diffeomorphism groups:  if $\Phi: \Diff_c^r(M_1) \to \Diff^p_c(M_2)$ is a homomorphism between 1-manifold diffeomorphism groups, then any topological $\R$-subgroup of $\Diff_c^r(M_1)$ maps continuously into $\Diff_c^p(M_2)$.

\boldhead{Outline}
In Section \ref{comm} we establish a preliminarily algebraic--topological relationship using commuting subgroups.  This is done by developing the elementary theory of 1-manifold diffeomorphism groups stemming from Kopell's Lemma and H\"older's Theorem.  

In Section \ref{cont} we attack continuity.  Using the tools of Section \ref{comm}, we show that $\R$-subgroups of diffeomorphisms of a 1-manifold (flows) behave well under homomorphisms of diffeomorphism groups.  This rules out the kind of behavior in Example \ref{noncont}.

We prove Theorem \ref{main} in Section \ref{mainpf}, strengthening the algebraic--topological relationship from Section \ref{comm} and using continuity of $\R$-subgroups to construct continuous maps $f_i$ from $M_1$ to $M_2$ that define a topologically diagonal embedding.   

In Section \ref{ghys} we then compare 1-manifold diffeomorphism groups to $n$-manifold diffeomorphism groups and show that $\dim(M)=1$ can be detected as an algebraic invariant of a diffeomorphism group.  This is used to prove Theorem \ref{main cor}, answering the 1-manifold case of Ghys' question.  

Finally, Section \ref{SSS} concludes with some remarks on the necessity of our hypotheses (e.g. compact supports) and further questions.

\bigskip
\noindent The author wishes to thank Benson Farb, Amie Wilkinson and Bena Tshishiku for many helpful conversations regarding this work, and John Franks and Emmanuel Milton for their comments and careful reading.   Thanks also to the anonymous referee for suggestions to correct and clarify work in Section \ref{comm}.


\section{Commuting groups of diffeomorphisms} \label{comm}

\textbf{Algebraic--topological associations}\\
Our first goal is to associate the algebraic data of a group of diffeomorphisms of a manifold to the topological data of the manifold.  One example of the kind of association we have in mind appears in Filipkiewicz's proof of Theorem \ref{fil}.  Given an isomorphism between the diffeomorphism groups of two manifolds, Filipkiewicz builds a map between the manifolds by associating points of each manifold with their stabilizers -- the subgroups $\{g \in \Diff_c^r(M) : g(x) = x \}$.  Point stabilizers are maximal subgroups of $\Diff_c^r(M)$, so this gives a loose association between an algebraic property (maximality) and a topological object (point).  This association is eventually used to build a point-to-point map out of the algebraic data of a group isomorphism.  

However, Filipkiewicz's point/maximal subgroup association won't work for our purposes.  The main problem is that the maximality isn't preserved under group homomorphisms.  Whereas an \emph{isomorphism} $\Phi: \Diff_c^r(M_1) \to \Diff_c^r(M_2)$ maps maximal subgroups of $\Diff_c^r(M_1)$ to maximal subgroups of $\Diff_c^r(M_2)$, a \textit{homomorphism} may not.  To prove that group homomorphisms $\Diff_c^r(M_1) \to \Diff_c^r(M_2)$ have nice topological properties, we need to make use of an algebraic--topological correspondence robust under homomorphisms.  We will look at commuting, rather than maximal, subgroups. Our analysis starts with a trivial observation:

\begin{observation} Let $M$ be a manifold, $N \subset M$ a submanifold, and $H$ the subgroup of $\Diff^r(M)$ that fixes $N$ pointwise.  Then $H$ commutes with the subgroup of diffeomorphisms supported on $N$.  In other words, the centralizer of $H$ in $\Diff^r(M)$ is ``large".  
\end{observation}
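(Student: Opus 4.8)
The plan is to verify the commuting statement directly and then read off the conclusion; there is essentially nothing subtle to do. Interpret ``the subgroup of diffeomorphisms supported on $N$'' as $G_N := \{ g \in \Diff^r(M) : g|_{M \setminus N} = \mathrm{id}\}$ (when $N$ is a nonempty open subset this is just $\Diff^r_c(N)$ sitting inside $\Diff^r(M)$ by extension by the identity). It then suffices to show that each $h \in H$ commutes with each $g \in G_N$, i.e. that $h g h^{-1} = g$, or equivalently that $h(g(y)) = g(h(y))$ for every $y \in M$.

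First I would record two elementary consequences of the definitions. Since $g$ is the identity off $N$, it permutes $N$ and also $M \setminus N$; in particular $g(N) = N$. Since $h$ fixes $N$ pointwise we have $h(N) = N$, and since $h$ is a bijection this forces $h(M \setminus N) = M \setminus N$ as well.

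Then I would split into two cases. If $y \in N$, then $h(y) = y$, so $g(h(y)) = g(y)$; and $g(y) \in g(N) = N$, so $h(g(y)) = g(y)$ too, whence $h(g(y)) = g(h(y))$. If $y \notin N$, then $g(y) = y$, so $h(g(y)) = h(y)$; and $h(y) \in h(M \setminus N) = M \setminus N$, so $g(h(y)) = h(y)$, again giving $h(g(y)) = g(h(y))$. Thus $h$ and $g$ commute, so $G_N$ lies in the centralizer of $H$ in $\Diff^r(M)$; when $N$ is a nonempty open set this centralizer contains the entire group $\Diff^r_c(N)$, which is ``large'' in any reasonable sense.

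I do not expect any real obstacle here, since the statement is deliberately elementary. The only points requiring the slightest care are fixing the convention for ``supported on $N$'' so that $g$ genuinely restricts to the identity outside $N$, and observing that a bijection fixing $N$ pointwise must also preserve its complement. The substance of the observation is not the argument but the template it sets up: a large centralizer will serve as the algebraic shadow of ``a submanifold is fixed pointwise'', and the subsequent sections develop refinements of precisely this correspondence that survive arbitrary homomorphisms.
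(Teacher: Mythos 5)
Your proof is correct and is exactly the routine verification the paper has in mind (the paper omits the argument entirely, calling the statement a trivial observation). The case split on $y \in N$ versus $y \notin N$, together with the remarks that $g(N)=N$ and $h(M\setminus N)=M\setminus N$, is the intended and complete justification.
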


It turns out that when $M$ is one-dimensional, this property is \emph{characteristic} of subgroups of $\Diff^r_c(M)$ that fix a submanifold.  Precisely, we have the following proposition.

\begin{proposition}  \label{simpler prop}
If $G$ is a nonabelian subgroup of $\Diff^r_c(\R)$ or $\Diff^r(I)$ and $G$ has nonabelian centralizer, then there is an open interval contained in $\fix(G)$.  
\end{proposition}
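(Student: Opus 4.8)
The plan is to analyze the dynamics of $G$ and its centralizer $Z(G)$ together on the 1-manifold, using Kopell's Lemma and Hölder's Theorem as the main tools. Since $G$ is nonabelian, by Hölder's Theorem $G$ cannot act freely on any invariant open interval; more usefully, we want to produce a point or interval that is fixed. First I would pass to $\supp(G)$, the union of the (open) support intervals; write $U = \interior(\fix(G))^c$ — equivalently, look at the connected components of the complement of $\fix(G)$. The goal is to show one such component is the whole of some open interval on which $G$ acts, and then \emph{derive a contradiction} from the existence of a nonabelian centralizer unless $\fix(G)$ already contains an open interval. So suppose for contradiction that $\fix(G)$ contains no open interval, i.e. $\supp(G)$ is dense.

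Next I would use the centralizer. Let $h \in Z(G)$ be chosen together with some $g \in G$ so that $\langle g, h\rangle$ is nonabelian — possible since both $G$ and $Z(G)$ are nonabelian (if every element of $Z(G)$ commuted with every element of $G$ \emph{and} $Z(G)$ were abelian we'd still need to be careful, but nonabelianness of $Z(G)$ gives two non-commuting elements $h_1, h_2$, at least one of which fails to commute with some $g \in G$ only if... — actually the cleaner route: $Z(G)$ nonabelian means $Z(G) \not\subset Z(\Diff^r_c)$, and elements of $Z(G)$ preserve $\fix(G)$ and $\supp(G)$ setwise). The key structural point: each $h \in Z(G)$ permutes the connected components of $\supp(G)$, and on each component $J$ the group $G|_J$ is nonabelian (for at least one component, else $G$ would be a restricted product of abelian groups hence abelian — here one must check that if $G$ acts abelianly on every component it acts abelianly overall, using that the components are disjoint). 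Fix such a component $J$; its stabilizer in $Z(G)$ has finite-index-like behavior, and after replacing elements by powers I can assume a nontrivial $h \in Z(G)$ fixes $J$ setwise and commutes with the nonabelian group $G|_J$ acting on the interval $J$.

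The crux is then a Kopell-type argument on the single interval $J$. Since $G|_J$ is nonabelian and acts on the interval $J \cong \R$ with no interior fixed points (because $J$ is a single component of $\supp(G)$, so $\fix(G) \cap J$ has empty interior), I want to show its centralizer in $\Diff^r(J)$ must be trivial or at least abelian — contradicting that $h$ (nontrivial, in that centralizer) together with appropriate elements generates something nonabelian, or more directly contradicting nonabelianness of $Z(G)$ restricted suitably. The mechanism: take $g \in G|_J$ with an isolated fixed point $x_0$ in $J$ (such exists when the action is nonabelian and $\fix$ has empty interior — one can always find an element with a fixed point whose complement meets $\supp(G)$, hence by further restriction an isolated one, invoking that $r \geq 2$ lets us apply Kopell). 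Kopell's Lemma says the centralizer of $g$ in $\Diff^r$ of a half-open interval with endpoint $x_0$ is trivial (acts freely on the open part or fixes pointwise). So $h$, commuting with $g$, must fix a neighborhood of $x_0$ pointwise or have very constrained behavior; iterating this across the (dense) fixed points of various elements of $G|_J$ forces $h$ to be the identity on $J$, and running this over all components forces $h \in \fix(G)$-type triviality, making $Z(G)$ abelian — the desired contradiction.

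The main obstacle I expect is the bookkeeping in the Kopell step: ensuring one genuinely gets an element of $G$ with an \emph{isolated} fixed point inside a support component (a priori the fixed set of each individual $g$ could be a Cantor set with no isolated points), and handling the case where $G|_J$ is nonabelian but has no element with isolated fixed point. The fix is probably to argue that if $\fix(G) \cap J$ has empty interior then \emph{either} some $g \in G$ has an isolated fixed point in $J$ (apply Kopell directly) \emph{or} $G|_J$ acts with all fixed sets perfect — but in the latter case a minimal-set / semiconjugacy argument (Hölder again, on a suitable sub-interval, or Denjoy-type reasoning using $r \geq 2$) shows $G|_J$ would have to be abelian, contradiction. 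Assembling these cases cleanly, and making sure "centralizer of $G$" rather than "centralizer of a single element" is what gets used at the end (so that we really conclude $\fix(G)$ has nonempty interior, not just $\fix(g)$), is where the care is needed.
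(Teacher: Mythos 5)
Your overall strategy (prove the contrapositive: if $\fix(G)$ has empty interior then the centralizer is abelian, using Kopell and H\"older) is the right one and matches the paper's, but the crux of your argument has a genuine gap. The problem starts with your choice of decomposition: you work with connected components $J$ of $\supp(G)$ (equivalently of $I \setminus \fix(G)$), on which no single element of $G$ need act without fixed points, and this is exactly what forces you into the isolated-fixed-point difficulty you flag at the end. Your proposed mechanism does not work even when an isolated fixed point $x_0$ of some $g$ exists: Kopell's Lemma applied on a half-open interval adjacent to $x_0$ says only that a commuting $h$ is either the identity or fixed-point-free there; combined with H\"older this makes the centralizer of $g$ on that piece \emph{abelian}, not trivial. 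So the conclusion you want to iterate --- that $h$ ``must fix a neighborhood of $x_0$ pointwise'' and hence is the identity on $J$ --- is false in general: the centralizer of a nonabelian $G|_J$ with dense support can be a nontrivial abelian group (e.g.\ a subgroup of the Szekeres flow of some fixed-point-free element). Your fallback for the case where every $\fix(g)$ is a Cantor set (``minimal-set / semiconjugacy / Denjoy-type reasoning'') is not an argument, and that case is not exotic.

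The repair is to decompose by individual elements: for each $g \in G$, look at the components $K$ of $I \setminus \fix(g)$, where $g$ itself is fixed-point-free, so the H\"older--Kopell combination applies directly and gives that the centralizer of $g|_{\bar{K}}$ is abelian --- no isolated fixed points needed. The one nontrivial step (which your proposal is missing) is showing that each $h$ in the centralizer actually preserves each such $K$; this is done by showing $h$ fixes the endpoints of $K$: if some $x \in \fix(g)$ were moved by $h$, it would lie in a component $J'$ of $I \setminus \fix(h)$ which is $g$-invariant, and applying H\"older--Kopell to $g|_{\bar{J'}}$ (which has the fixed point $x$ while $h|_{J'}$ has none) forces $g|_{\bar{J'}} = id$. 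Once each $K$ is $H$-invariant, every commutator $[h_1,h_2]$ is the identity on $\bigcup_{g \in G} (I \setminus \fix(g)) = I \setminus \fix(G)$, hence on its closure; so either $\fix(G)$ contains an open interval or the centralizer is abelian. Note also that you only need abelianness of the centralizer on a dense set, so restricting attention to a single component where $G$ acts nonabelianly, as you propose, is both unnecessary and insufficient (you would still have to control the centralizer over the components where $G|_J$ happens to be abelian).
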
 
 
Here, and in the sequel, $\fix(G)$ denotes the set $\{x \in M : g(x) = x \text{ for all } g \in G\}$.  Similarly, for a single diffeomorphism $g$, we set $\fix(g): =  \{x \in M : g(x) = x\}$.  

A similar statement to Proposition \ref{simpler prop} holds for $S^1$; it is stated as Proposition \ref{keypropS1} below.  But before we consider the $S^1$ case, let us first develop some of the necessary background for the proof of Proposition \ref{simpler prop}.

\boldhead{H\"older - Kopell Theory}
To prove Proposition \ref{simpler prop} and its counterpart for diffeomorphisms of $S^1$, we extend the theory of commuting diffeomorphisms of 1-manifolds that stems from two classical theorems (Theorems \ref{kopell} and \ref{holder} below) due to Otto H\"older and Nancy Kopell.  This theory for 1-manifolds is already surprisingly rich -- one can use Theorems \ref{kopell} and \ref{holder} to prove, for instance, that all nilpotent groups of $\Diff^2_+(S^1)$ are abelian, and that certain dynamically characterized subgroups of $\Diff^2_+(\R)$ are all conjugate into the affine group.  Details and some background can be found in \cite{FF} and in chapter 4 of \cite{Na}.  Here we restrict our attention to results needed to prove Proposition \ref{simpler prop}.  

\begin{theorem}[Kopell's Lemma, \cite{Ko}]  \label{kopell}
Let $g$ and $h$ be commuting $C^2$ diffeomorphisms of the half-open interval $I:= [0,1)$.  If $g$ is fixed point-free on the interior of $I$ and $h$ is not the identity, then $h$ is also fixed point free on $\interior(I)$.  
\end{theorem}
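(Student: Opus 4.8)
The plan is to argue by contradiction: suppose $h \neq \mathrm{id}$ yet $h$ fixes some point of $\interior(I) = (0,1)$, and derive that $h = \mathrm{id}$. First I would normalize: after replacing $g$ by $g^{-1}$ if necessary (which does not affect the conclusion), assume $g(x) < x$ on $(0,1)$, so that $g^n(x) \to 0$ for every $x \in (0,1)$. Then I would exploit the commutation relation: $hg = gh$ forces $\fix(h)$ to be $g$-invariant, and $\fix(h)$ is closed and — since $h \neq \mathrm{id}$ — proper. Iterating $g$ on a fixed point of $h$ in $(0,1)$ shows $\fix(h)$ accumulates at $0$; and $\fix(h)$ cannot abut $1$, for otherwise it would attain a maximum, which $g^{-1}$ would push strictly rightward, contradicting $g^{-1}$-invariance. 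Hence $(0,1) \setminus \fix(h)$ has a connected component $(a,b)$ with $0 < a < b < 1$ and $a, b \in \fix(h)$; after replacing $h$ by $h^{-1}$ if necessary, assume $h(x) > x$ on $(a,b)$. Being an increasing homeomorphism that fixes $a$ and $b$, $h$ maps $[a,b]$ onto itself.

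The crucial input is a bounded-distortion estimate for the iterates of $g$, and this is the one place the $C^2$ hypothesis is essential. Since $\fix(h)^c$ is $g$-invariant, the intervals $g^n((a,b))$, $n \geq 0$, are pairwise distinct connected components of $\fix(h)^c$ (if $g^k$ preserved $(a,b)$ for some $k \geq 1$ it would fix $a$), hence pairwise disjoint subintervals of $(0,1)$, so $\sum_{n \geq 0} |g^n((a,b))| \leq 1$. Writing $C_1 := \sup_{[0,b]} |g''/g'| < \infty$, the mean value theorem gives, uniformly in $x \in [a,b]$ and $n$,
\[
\Bigl| \log \tfrac{(g^n)'(x)}{(g^n)'(a)} \Bigr| = \Bigl| \sum_{k=0}^{n-1}\bigl(\log g'(g^k x) - \log g'(g^k a)\bigr) \Bigr| \leq C_1 \sum_{k \geq 0} |g^k((a,b))| \leq C_1 ,
\]
and this series converges uniformly on $[a,b]$. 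Therefore $(g^n)'(\cdot)/(g^n)'(a)$ converges uniformly on $[a,b]$ to a continuous, strictly positive function $\Lambda$.

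To conclude I would feed $\Lambda$ back into the commutation relation. Differentiating $h \circ g^n = g^n \circ h$ on $[a,b]$ and dividing by $(g^n)'(a)$ yields $h'(g^n(x)) \cdot \tfrac{(g^n)'(x)}{(g^n)'(a)} = \tfrac{(g^n)'(h(x))}{(g^n)'(a)} \cdot h'(x)$; letting $n \to \infty$, using $g^n(x) \to 0$ and the continuity of $h'$ at $0$, this passes to the limit as $h'(0)\, \Lambda(x) = \Lambda(h(x))\, h'(x)$ for all $x \in [a,b]$. Let $P$ be a $C^1$ antiderivative of $\Lambda$ on $[a,b]$; as $\Lambda > 0$, $P$ is a diffeomorphism onto its image, and the identity above reads $\tfrac{d}{dx}P(h(x)) = \tfrac{d}{dx}\bigl(h'(0)\, P(x)\bigr)$, so $P(h(x)) = h'(0)\, P(x) + c$ for a constant $c$. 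Thus $P \circ h \circ P^{-1}$ is an affine map; since it fixes the two distinct points $P(a)$ and $P(b)$, it is the identity, so $h = \mathrm{id}$ on $[a,b]$ — contradicting $h(x) > x$ on $(a,b)$.

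The step I expect to be the main obstacle is this distortion estimate together with the passage to the limit: it is what converts the purely topological disjointness of the intervals $g^n((a,b))$ into the analytic statement that the iterates $g^n$ are asymptotically affine on $[a,b]$, and it is exactly here that $C^2$ cannot be dropped, since Kopell's Lemma is false for $C^1$ diffeomorphisms. The extraction of the component $(a,b)$ beforehand and the affine-conjugacy contradiction afterwards are comparatively routine bookkeeping.
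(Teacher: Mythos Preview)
The paper does not actually prove Kopell's Lemma: it states the result, cites Kopell, and refers the reader to Section~4.1.1 of Navas \cite{Na} for a proof. So there is nothing to compare against in the paper itself. Your argument is correct and is in fact the standard proof (essentially the one in Navas): bounded distortion of the iterates $g^n$ on a fundamental interval, obtained from the summability $\sum_n |g^n((a,b))| < \infty$ and the $C^2$ hypothesis, followed by the observation that in the limit $h$ is conjugate on $[a,b]$ to an affine map with two fixed points.

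One small wording issue, not a mathematical gap: the sentence ``$\fix(h)$ cannot abut $1$'' is the opposite of what you want to say. Your argument (``otherwise it would attain a maximum, which $g^{-1}$ would push strictly rightward'') shows that $\fix(h)$ \emph{does} accumulate at $1$, i.e.\ is not bounded away from $1$; equivalently, no component of $(0,1)\setminus\fix(h)$ has right endpoint $1$. Combined with the accumulation at $0$ you already noted, this is exactly what forces the component $(a,b)$ to satisfy $0 < a < b < 1$. You should rephrase that clause, but the mathematics underneath it is fine.
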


\begin{theorem} [H\"older, \cite{Ho}]  \label{holder}
Let $G$ be a group of orientation-preserving homeomorphisms of $S^1$ or of the open interval.  If $G$ acts freely (i.e. no element other than the identity has a fixed point), then $G$ is abelian.  
\end{theorem}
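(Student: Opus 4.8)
\noindent This is a classical theorem, and the plan follows the standard route: show that a group acting freely by orientation-preserving homeomorphisms on a line or circle carries a bi-invariant \emph{Archimedean} order, and then invoke H\"older's structure theorem, according to which every Archimedean ordered group embeds into $(\R,+)$ --- and hence is abelian.

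\noindent\textbf{Step 1: reduce $S^1$ to the line.} Write $S^1 = \R/\Z$ with covering projection $p\colon \R \to S^1$ and deck translation $T\colon x \mapsto x+1$. Given $G \subset \Homeo_+(S^1)$ acting freely, let $\widehat G$ be its full preimage under the central extension $\widetilde{\Homeo}_+(S^1) \to \Homeo_+(S^1)$. Then $\widehat G$ is a group of orientation-preserving homeomorphisms of $\R$ containing $\langle T \rangle$ as a central subgroup, with $\widehat G / \langle T \rangle \cong G$, and I claim $\widehat G$ acts \emph{freely} on $\R$: a nontrivial element of $\langle T \rangle$ is a nonzero power of $T$, hence fixed-point free, while any element of $\widehat G \setminus \langle T \rangle$ covers a nontrivial element of $G$, so a fixed point of it on $\R$ would project to a forbidden fixed point on $S^1$. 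Since $G$ is a quotient of $\widehat G$, it then suffices to treat groups acting freely on $\R$ (an open interval being order-isomorphic to $\R$).

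\noindent\textbf{Step 2: a free action on $\R$ is an Archimedean ordered group.} Let $G \subset \Homeo_+(\R)$ act freely. For $g \neq \mathrm{id}$ the function $x \mapsto g(x) - x$ is continuous and nowhere zero, hence of constant sign; let $P \subset G$ be the set of $g$ for which it is positive. Evaluating at points and using that elements of $G$ are increasing, one checks routinely that $P \cdot P \subset P$, that $P \cap P^{-1} = \varnothing$, that $G = P \sqcup \{\mathrm{id}\} \sqcup P^{-1}$, and that $h P h^{-1} \subset P$ for all $h$; thus $G$ is a bi-ordered group with positive cone $P$. To see the order is Archimedean, fix $g \in P$, $h \in G$, and a basepoint $x_0$. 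The sequence $\bigl(g^n(x_0)\bigr)_{n \ge 1}$ is strictly increasing; if it were bounded above, its limit $L$ would satisfy $g(L) = L$ by continuity of $g$ --- impossible by freeness. Hence $g^n(x_0) \to +\infty$, so $g^n(x_0) > h(x_0)$ for large $n$; then $g^n h^{-1}$ sends the point $h(x_0)$ strictly to its right, so $g^n h^{-1} \in P$, i.e.\ $g^n > h$. Thus $(G, P)$ is Archimedean.

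\noindent\textbf{Step 3: H\"older's theorem on ordered groups.} Fix $a \in P$. By the Archimedean property $\langle a \rangle$ is cofinal in $(G, P)$ in both directions, so for each $g \in G$ and $n \ge 1$ there is a unique integer $\ell(g, n)$ with $a^{\ell(g,n)} \le g^n < a^{\ell(g,n)+1}$. Bi-invariance gives the quasi-additivity $\ell(g, m) + \ell(g, n) \le \ell(g, m+n) \le \ell(g, m) + \ell(g, n) + 1$, so $\phi(g) := \lim_n \ell(g, n)/n$ exists by Fekete's subadditive lemma. One then verifies that $\phi \colon G \to \R$ is a monotone homomorphism --- its additivity being the one genuinely delicate point --- and that it is injective, since for $g \in P$ the Archimedean property yields $g^k > a$ for some $k$, whence $g^{mk} > a^m$ and $\phi(g) \ge 1/k > 0$. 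Therefore $\phi$ embeds $G$ into $(\R, +)$ and $G$ is abelian. This step is entirely classical and may alternatively simply be quoted (see e.g.\ \cite{Na}).

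\noindent\textbf{Main obstacle.} The only point where continuity, rather than mere order structure, enters is the Archimedean verification in Step 2 --- excluding a bounded orbit $g^n(x_0)$ by passing to its limit --- and this is the conceptual heart of the matter. If one prefers not to quote H\"older's ordered-group theorem, then its bookkeeping (convergence of $\ell(g,n)/n$, and above all the additivity of the translation number $\phi$) is the most laborious part of the write-up.
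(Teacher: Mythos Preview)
Your argument is correct and follows the classical route to H\"older's theorem: reduce $S^1$ to $\R$ via the universal cover, show that a free orientation-preserving action on $\R$ induces a bi-invariant Archimedean order, and embed into $(\R,+)$ via translation numbers. Each step is handled soundly; in particular, the Archimedean verification in Step~2 (ruling out a bounded orbit by continuity) is the right place to use the topology.

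As for comparison with the paper: the paper does not actually prove this theorem. It is quoted as background (Theorem~\ref{holder}) with the remark that an elementary proof can be found in \cite{FF}, and is then used only as a black box in the H\"older--Kopell Lemma. So there is nothing to compare your approach against here; you have supplied a complete proof where the paper simply cites one.
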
  

Both of these theorems admit elementary proofs.  A nice proof of Theorem \ref{kopell} is given in Section 4.1.1 of \cite{Na}, and one of  Theorem \ref{holder} in \cite{FF}.  
The result most useful to us is the following combination of H\"older's Theorem and Kopell's Lemma. 

\begin{corollary}[\textbf{the H\"older-Kopell Lemma}] \label{holderkopell}
Let $g \in \Diff^2_+(I)$.  If $g$ has no interior fixed points, then the centralizer of $g$ in $\Diff^2_+(I)$ is abelian.  
\end{corollary}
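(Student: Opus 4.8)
The plan is to combine Kopell's Lemma (Theorem \ref{kopell}) and H\"older's Theorem (Theorem \ref{holder}) essentially directly. Write $Z$ for the centralizer of $g$ in $\Diff^2_+(I)$, where $I = [0,1)$, and recall that every element of $\Diff^2_+(I)$ fixes the endpoint $0$, so the only interesting dynamics takes place on $\interior(I) = (0,1)$.

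The first step is to observe that every non-identity element of $Z$ acts freely on $\interior(I)$. Indeed, let $h \in Z$ with $h \neq \mathrm{id}$. Then $h$ and $g$ are commuting $C^2$ diffeomorphisms of $I = [0,1)$, and $g$ has no fixed points on $\interior(I)$ by hypothesis; Kopell's Lemma then forces $h$ to have no fixed points on $\interior(I)$ either. The second step is to pass to the open interval: since a $C^2$ diffeomorphism of $[0,1)$ is determined by its restriction to $(0,1)$, restriction defines an injective homomorphism $Z \hookrightarrow \Homeo_+\bigl((0,1)\bigr)$, and by the first step the image is a group of orientation-preserving homeomorphisms of the open interval acting freely. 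The final step is to apply H\"older's Theorem to this image, which gives that it is abelian; since the restriction map is injective, $Z$ itself is abelian.

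I do not expect any genuine obstacle here — the argument is a clean two-line splice of the two cited theorems. The only points that require a word of care are that the endpoint $0$ of $I$ is fixed by all of $\Diff^2_+(I)$ automatically (so one really is looking at a free action on the \emph{open} interval, which is the hypothesis H\"older's Theorem needs), and that the restriction $Z \to \Homeo_+((0,1))$ is faithful, so that abelianness transfers back to $Z$. Both are immediate, so the proof is short.
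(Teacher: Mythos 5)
Your argument is correct and is essentially identical to the paper's proof: apply Kopell's Lemma to each nonidentity element of the centralizer to conclude it acts freely on $\interior(I)$, then invoke H\"older's Theorem. The extra remarks about restricting to the open interval and faithfulness are fine but not needed beyond what the paper already implicitly uses.
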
 

\begin{proof} Each element $h$ in the centralizer $C(g)$ commutes with $g$, so by Kopell's Lemma $h$ is either the identity or acts without fixed points on $\interior(I)$.  Thus, $C(g)$ acts freely and so by H\"older's Theorem $C(g)$ is abelian.  

\end{proof} 

\begin{remark} 
Corollary \ref{holderkopell} also follows from a more general theorem of Szekeres (Theorem \ref{szek} in Section \ref{cont}).  We don't need the generality of Szekeres' theorem here, and as it is a deeper result than H\"older's theorem and Kopell's lemma we postpone its introduction to Section \ref{cont}.  
\end{remark}

We now make use of Corollary \ref{holderkopell} to prove Proposition \ref{simpler prop}.  We note that this Proposition can be easily deduced from Lemma 3.2 and Theorem 3.5 of \cite{FF1} (which also proves some similar results for $S^1$), but we give a self-contained proof here for completeness.

\begin{proof}[Proof of Proposition \ref{simpler prop}]

\noindent We will repeatedly use the following elementary fact: 
\begin{fact} \label{easyfact}
$\fix(hgh^{-1}) = h \fix(g)$.  In particular, if $g$ and $h$ commute, then $\fix(g)$ is $h$-invariant.  
\end{fact}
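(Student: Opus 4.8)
The plan is to verify the set equality $\fix(hgh^{-1}) = h\,\fix(g)$ directly by unwinding the definition of the fixed-point set, using only that $h$ is a bijection of $M$. First I would observe that for a point $x \in M$, the condition $hgh^{-1}(x) = x$ is equivalent, after applying $h^{-1}$ to both sides (legitimate since $h$ is invertible), to $g\bigl(h^{-1}(x)\bigr) = h^{-1}(x)$, i.e.\ to $h^{-1}(x) \in \fix(g)$, which in turn says $x \in h\bigl(\fix(g)\bigr)$. Since each implication in this chain is reversible, reading it left-to-right gives $\fix(hgh^{-1}) \subseteq h\,\fix(g)$ and reading it right-to-left gives the reverse inclusion, so the two sets coincide. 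The only thing worth stating carefully is that both inclusions come from the \emph{same} string of biconditionals, so a single computation suffices in place of two separate arguments.

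For the ``in particular'' clause I would simply note that if $g$ and $h$ commute, then $hgh^{-1} = g$, so the identity just proved specializes to $\fix(g) = h\,\fix(g)$; that is, $h$ carries $\fix(g)$ onto itself, which is exactly the assertion that $\fix(g)$ is $h$-invariant.

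There is no genuine obstacle here: the argument is a formal consequence of $h$ being a bijection and never uses that $g$ or $h$ are diffeomorphisms, continuous, or even that $M$ is a manifold. Consequently the same one-line proof applies verbatim to the fixed-point set $\fix(h'gh'^{-1})$ of a single diffeomorphism, to $\fix(hGh^{-1}) = h\,\fix(G)$ for a subgroup $G$ (intersect the per-element statement over all $g \in G$), and in the $S^1$ setting, which is why the fact can be invoked freely throughout the sequel without further comment.
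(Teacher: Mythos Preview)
Your proof is correct. The paper itself does not give a proof of this fact at all---it is simply asserted as an ``elementary fact'' and used without further justification---so your direct biconditional chain is exactly the kind of one-line verification the paper has in mind.
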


Let $G$ be a nonabelian subgroup of $\Diff^r_+(I)$ with $r \geq 2$.   Let $H$ be the centralizer of $G$ in $\Diff^r_+(I)$, and assume that $H$ is nonabelian.  

First, note that each element $g \in G$ must have a fixed point in $(0,1)$, for if $g$ is fixed point free, then Corollary \ref{holderkopell} says that $C(g)$ is abelian, but $H$ (which we assumed to be non-abelian) is a subset of $C(g)$.  

We claim that in fact every endpoint of each component of $I \setminus \fix(g)$ is fixed by each $h \in H$.  To see this, let $h \in H$.  If some point $x \in \fix(g)$ is not fixed by $h$, then it lies in some connected component $J$ of $I \setminus \fix(h)$.  Since $g$ and $h$ commute, $\fix(h)$ is $g$-invariant. In particular $g$ permutes the connected components of $I \setminus \fix(h)$.  Since $x \in J$ is fixed by $g$, the interval $J$ must be $g$-invariant.  Apply the H\"older-Kopell lemma to $g|_{\bar{J}}$.  Since $h$ has no fixed points in $J$, but  $g$ does, we conclude that $g|_{\bar{J}} = id$.  Thus, the only fixed points of $g$ not necessarily fixed by $h$ are interior points of $\fix(g)$.  

Since the endpoints of each component of $I \setminus \fix(g)$ are fixed by each $h \in H$, each component of $I \setminus \fix(g)$ is $H$-invariant.  Consider again a particular connected component $J$ of $I \setminus \fix(g)$.  Kopell's lemma implies that $H|_{\bar{J}}$ is abelian.  Thus, if we let $\supp(g)$ denote the closure of $I \setminus \fix(g)$, we know that $H$ is abelian when restricted to $\bigcup_{g \in G} \supp(g)$, and hence on the closure of $\bigcup_{g \in G} \supp(g)$.  It follows that either there is some open set fixed by each $g \in G$ or $H$ is abelian everywhere.   

The proof for $G \subset \Diff_c^r(\R)$ follows quickly from the following observation 

\begin{observation} \label{embed observation}
There is an embedding $\iota: \Diff_c^r(\R) \to \Diff^r(I)$ whose image is the set of diffeomorphisms of $I$ supported away from the endpoints of $I$.  In particular, for $x \in \interior(I)$ we may define $\iota(g)(x)$ by $fg(f^{-1}(x))$ where $f: \R \to \interior(I)$ is a $C^\infty$ diffeomorphism.  
\end{observation}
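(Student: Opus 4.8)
The plan is to take $\iota$ to be conjugation by a fixed smooth identification of $\R$ with $\interior(I)$, extended by the identity across $\del I$; with this choice the only thing requiring thought is that the compact-support hypothesis on $\Diff_c^r(\R)$ is exactly what forces the extension to be $C^r$ at the endpoints.

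Concretely, I would fix once and for all an orientation-preserving $C^\infty$ diffeomorphism $f : \R \to \interior(I)$ and, for $g \in \Diff_c^r(\R)$, define $\iota(g)$ to equal $f g f^{-1}$ on $\interior(I)$ and the identity on $\del I$. The homomorphism property $\iota(gh) = \iota(g)\iota(h)$ is then the formal identity $f(gh)f^{-1} = (fgf^{-1})(fhf^{-1})$ on $\interior(I)$, both sides being trivial on $\del I$, and injectivity is immediate since $\iota(g) = \mathrm{id}_I$ forces $g = \mathrm{id}_\R$. To identify the image with the set of $\phi \in \Diff^r(I)$ whose support is a compact subset of $\interior(I)$, I would argue in the reverse direction: such a $\phi$ is the identity near $\del I$, so $f^{-1}\phi f$ extends by the identity outside the compact set $f^{-1}(\supp \phi)$ to a compactly supported $C^r$ diffeomorphism of $\R$ — necessarily orientation-preserving and hence isotopic to the identity — which $\iota$ sends back to $\phi$.

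The one step that is not pure formal manipulation is checking that $\iota(g)$ is genuinely $C^r$ on all of $I$, including at $\del I$, where $f^{-1}$ and its derivatives blow up. Here I would use compact support directly: choosing $N$ with $\supp(g) \subset (-N,N)$, the diffeomorphism $\iota(g)$ is literally the identity on the one-sided neighborhoods $f((N,\infty))$ and $f((-\infty,-N))$ of the endpoints of $I$, so it is locally trivial near $\del I$ and therefore $C^r$ there; the same applies to $\iota(g)^{-1} = \iota(g^{-1})$, so $\iota(g) \in \Diff^r(I)$. This regularity-at-the-boundary point is the only potential obstacle, and it disappears precisely because the diffeomorphisms of $\R$ under consideration are compactly supported — which is also why the image is supported away from the endpoints rather than being all of $\Diff^r(I)$.
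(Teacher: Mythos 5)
Your proof is correct and takes the same approach the paper intends: the paper states this only as an observation, giving just the formula $\iota(g)(x) = fg(f^{-1}(x))$ with no further justification, and your argument supplies exactly the missing details. In particular, your verification that $\iota(g)$ is $C^r$ at $\del I$ — because compact support of $g$ forces $\iota(g)$ to be literally the identity on one-sided neighborhoods of the endpoints — is the one genuinely non-formal point, and you have identified and handled it correctly.
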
 

Hence, if $G \subset \Diff_c^r(\R)$ has nonabelian centralizer, then so does $\iota(G)$ as in Observation \ref{embed observation}.  Our proof above shows that $\fix(\iota(G))$ contains an open interval, hence $\fix(G)$ does as well.  

\end{proof}

 \begin{figure*}
   \labellist 
  \small\hair 2pt
   \pinlabel $\fix(G)$ at 130 70 
   \pinlabel $J$ at 245 75
   \pinlabel $\fix(H)$ at 200 30 
   \endlabellist
  \centerline{
    \mbox{\includegraphics[width=4in]{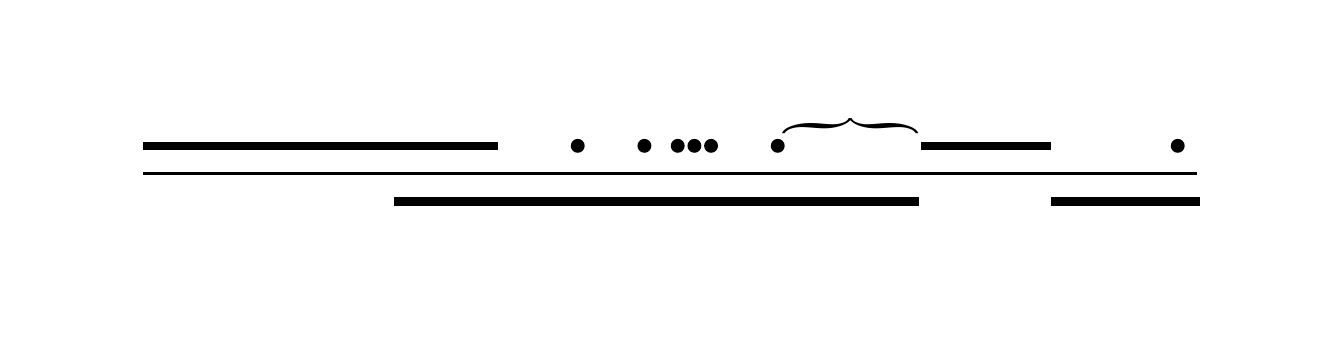}}}
 \caption{Fix sets of $G$ and $H$ indicated above and below the line.}
\centerline{If $H$ has a fixed point in $J$ and is not abelian on $J$, then $J \subset \fix(H)$.}
  \label{fixGfixH}
  \end{figure*}
  
A schematic picture of what fixed sets of nonabelian commuting $G$ and $H$ may look like is given in Figure \ref{fixGfixH}.    In particular we note the following corollary of the proof.  

\begin{corollary} \label{simpler prop cor} 
Let $G \subset \Diff^r(I)$ be as in proposition \ref{simpler prop}, and let $H$ be the centralizer of $G$.  Then $G$ and $H$ have a common fixed point in $\interior(I)$.  Similarly, if $G\subset \Diff_c^r(\R)$ and $H$ is the centralizer of $G$, then $G$ and $H$ have a common fixed point.  
\end{corollary}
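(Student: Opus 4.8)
The plan is to extract the desired common fixed point from the \emph{proof} of Proposition \ref{simpler prop} rather than just its statement. That proof gives two facts I will use as black boxes: first, $\fix(G)$ contains an open interval (this is the conclusion of Proposition \ref{simpler prop}, since $H$ is nonabelian); second, for every $g\in G$, every endpoint of every connected component of $I\setminus\fix(g)$ is fixed by every $h\in H$. Note also that both $\fix(G)$ and $\fix(H)$ are closed, and that $G$ is nontrivial since it is nonabelian.

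For the interval case I would first choose $U$ to be a connected component of the open set $\interior(\fix(G))$, which is nonempty by the first fact; write $U=(c,d)$ with $0\le c<d\le 1$. Since $G$ is nontrivial, $\interior(\fix(G))\neq\interior(I)$, so $U\neq\interior(I)$ and at least one endpoint of $U$ lies in $\interior(I)$; say $c\in(0,1)$, the case $d\in(0,1)$ being entirely symmetric (work with $d$ and approach from the right instead). Then $c\in\fix(G)$ because $\fix(G)$ is closed and $(c,d)\subseteq\fix(G)$, while $c\notin\interior(\fix(G))$ because $U$ is a maximal component. Hence every left-neighborhood of $c$ meets $I\setminus\fix(G)$ — the right side $(c,d)$ being entirely inside $\fix(G)$ — so I can pick $x_n\nearrow c$ with $x_n\notin\fix(g_n)$ for suitable $g_n\in G$. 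Letting $J_n$ be the component of $I\setminus\fix(g_n)$ containing $x_n$, the inclusion $(c,d)\subseteq\fix(g_n)$ forces $J_n$ to lie to the left of $c$, so its right endpoint $r_n$ satisfies $x_n<r_n\le c$ and hence $r_n\to c$; by the second fact $r_n\in\fix(H)$, and since $\fix(H)$ is closed, $c\in\fix(H)$. Thus $c$ is a common fixed point of $G$ and $H$ in $\interior(I)$.

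For the case $G\subset\Diff_c^r(\R)$ I would transport the problem to $I$ via the embedding $\iota\colon\Diff_c^r(\R)\to\Diff^r(I)$ of Observation \ref{embed observation}, induced by a $C^\infty$ diffeomorphism $f\colon\R\to\interior(I)$. The group $\iota(G)$ is nonabelian and its centralizer in $\Diff^r(I)$ contains the nonabelian group $\iota(H)$, so the interval case applies to $\iota(G)$: it shares a fixed point $c\in\interior(I)$ with its centralizer, and in particular with $\iota(H)$. Since $\iota(g)(x)=fgf^{-1}(x)$ on $\interior(I)$, the point $f^{-1}(c)$ is fixed by every $g\in G$ and every $h\in H$, which is the required common fixed point. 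The only place any care is needed is the point-set bookkeeping in the second paragraph: one must check that the chosen boundary point $c$ actually lies in $\interior(I)$ (this is exactly where nontriviality of $G$ is used) and that the complementary components $J_n$ sit entirely to the left of $c$ so that their right endpoints converge to $c$; everything else follows immediately from the two recycled facts.
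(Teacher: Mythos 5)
Your proof is correct and follows the same route as the paper's (one-line) argument: take a maximal open interval in $\fix(G)$ and show its endpoint is fixed by $H$ using the fact, from the proof of Proposition \ref{simpler prop}, that endpoints of components of $I\setminus\fix(g)$ are $H$-fixed. Your version just makes explicit the limiting/closedness argument and the check that the endpoint lies in $\interior(I)$, which the paper leaves implicit.
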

\begin{proof}
We know that $\fix(G)$ contains a nontrivial interval, and the proof of proposition \ref{simpler prop} implies that each endpoint of this interval is fixed by all $h \in H$.  
\end{proof}

As another immediate consequence of Proposition \ref{simpler prop}, we have
\begin{corollary} \label{perfectcor}
Let $r \geq 2 $ and let $G$ and $H$ be commuting, nonabelian subgroups of $\Diff^r_+(I)$ or $\Diff^r_c(\R)$. 
If $H$ is perfect, then $H$ acts trivially on the complement of $\fix(G)$. 
\end{corollary}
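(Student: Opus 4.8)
The plan is to notice that the proof of Proposition \ref{simpler prop} already proves something stronger than its statement: it shows that $H$ acts on (the closure of) the complement of $\fix(G)$ as an \emph{abelian} group of diffeomorphisms. The only extra input needed for Corollary \ref{perfectcor} is the elementary fact that a perfect group which acts abelianly acts trivially. Concretely, I would first reduce to the case $G, H \subseteq \Diff^r_+(I)$: if $G \subseteq \Diff^r_c(\R)$, transport everything into $\Diff^r(I)$ via the embedding $\iota$ of Observation \ref{embed observation}; then $\iota(G)$ and $\iota(H)$ are still commuting nonabelian subgroups, $\iota(H)$ is still perfect, and $\iota$ carries $\fix(G)$ onto $\fix(\iota(G)) \cap \interior(I)$, so it is enough to treat $I$. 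Since $H$ is nonabelian and $H \subseteq C(G)$, the hypotheses of Proposition \ref{simpler prop} hold for $G$.

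Next I would import, verbatim, the intermediate conclusions of the proof of Proposition \ref{simpler prop}: every $g \in G$ has an interior fixed point; for each $g \in G$, each component $J$ of $I \setminus \fix(g)$ is $H$-invariant because all of $H$ fixes its endpoints; and $H|_{\bar{J}}$ is abelian by the H\"older--Kopell Lemma (Corollary \ref{holderkopell}), applied to $g|_{\bar{J}}$, which is fixed-point-free on $\interior(\bar{J})$. Now add the new step: the restriction map from $H$ to the group of $C^r$ diffeomorphisms of $\bar{J}$ has abelian image, so since $H = [H,H]$ that image equals its own commutator subgroup and is therefore trivial; hence $H$ fixes $\bar{J}$ pointwise. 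Because $I \setminus \fix(G) = \bigcup_{g \in G}(I \setminus \fix(g))$ and every point of $I \setminus \fix(g)$ lies in some component $J$, it follows that $H$ fixes every point of $I \setminus \fix(G)$. (Equivalently, in the language of the proof of Proposition \ref{simpler prop}: $H$ is abelian when restricted to $\overline{\bigcup_{g \in G}\supp(g)}$, which contains $I \setminus \fix(G)$; being both perfect and abelian there, $H$ is trivial there.)

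I do not expect a real obstacle here — the content is genuinely a short consequence of the work already done. The one thing to be careful about is that the bare \emph{statement} of Proposition \ref{simpler prop} (``$\fix(G)$ contains an interval'') is too weak for this corollary, so the argument must quote the finer claim established in its proof, namely that $H$ acts by an abelian group on the complement of $\fix(G)$; it is worth spelling out explicitly that this abelian-action property is precisely what combines with perfectness of $H$ to force triviality. A secondary point to verify is that the $\R$-to-$I$ reduction loses nothing: $\iota$ is injective, so perfectness of $H$ passes to $\iota(H)$, and $\iota(G)$ is supported away from $\partial I$, so no spurious fixed points are created at the endpoints.
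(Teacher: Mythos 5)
Your proposal is correct and follows essentially the same route as the paper: the paper's proof likewise quotes the intermediate conclusion from the proof of Proposition \ref{simpler prop} that the centralizer of $g$ restricts to an abelian group on $\R \setminus \fix(g)$ (equivalently on each component $\bar{J}$), and then notes that a perfect group with abelian image under restriction must restrict trivially. Your write-up just makes explicit the $\R$-to-$I$ reduction and the component-by-component bookkeeping that the paper leaves implicit.
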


\begin{proof} 
We showed above that for any $g \in G$, the restriction of its centralizer to $\R \setminus \fix(g)$ is abelian.  Since $H$ is contained in the centralizer, its restriction to $\R \setminus \fix(g)$ is abelian as well.  If $H$ is perfect, this restriction must be trivial. 
\end{proof}
Since the groups $\Diff_c^r(M)$ and $\Diff^r(M)_0$ are perfect as long as $r \neq \dim(M) + 1$, we will be able to apply Corollary \ref{perfectcor} in the sequel.

We would like to have an analog of Proposition \ref{simpler prop} for diffeomorphisms of the circle.  To do this, we develop a version of the H\"older-Kopell Lemma for $S^1$.

\boldhead{Circle diffeomorphisms} A major tool for studying homeomorphisms (or diffeomorphisms) of $S^1$ is the \emph{rotation number}.  This is an $\R/\Z$-valued number, well defined for each $g \in \Homeo_+(S^1)$.  If we consider $S^1$ as $\R/\Z$, the rotation number of an element $g$ is given by $\lim \limits_{n \to \infty} \frac{g^n(0)}{n}$, taken mod $\Z$.   A good exposition of the basic theory can be found in \cite{Na}.  We need only three facts here. 
\begin{enumerate}
\item A homeomorphism has rotation number 0 if and only if it has a fixed point.  
\item Homeomorphisms with rational rotation number of the form $m/k$ are precisely those with periodic points of period $k$. 
\item (Denjoy's theorem) If $g$ is a $C^2$ diffeomorphism of the circle with irrational rotation number $\theta$, then $g$ is topologically conjugate to rotation by $\theta$. 
\end{enumerate}

Using Fact 2, we see that if $g$ is a diffeomorphism with rational rotation number, then some iterate $g^k$ has a fixed point, hence can be viewed as a diffeomorphism of the interval.  This puts us in a situation where Kopell's Lemma applies.  If instead $g$ has irrational rotation number, Fact 3 tells us that $g$ is conjugate to an irrational rotation.  Since any homeomorphism that commutes with an irrational rotation must also be a rotation (see \cite{Na}), we conclude:

\begin{corollary} \label{hks1} Let $g \in \Diff^2_+(S^1)$.  Then there is either some $k \in \N$ such that $g^k$ has a fixed point, or the centralizer of $g$ in $\Diff^2_+(S^1)$ is abelian.  
\end{corollary}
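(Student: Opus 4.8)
The plan is to split on the rotation number of $g$, using the three facts about rotation numbers recalled above; write $\rho(g) \in \R/\Z$ for that number.

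First I would dispose of the rational case. If $\rho(g) = m/k$ in lowest terms, then Fact 2 says $g$ has a periodic point of period $k$, i.e.\ there is $x \in S^1$ with $g^k(x) = x$. So $g^k$ has a fixed point and the first alternative of the corollary holds; there is nothing further to prove here. (This includes $\rho(g) = 0$, where $k = 1$ and $g$ itself fixes a point.) So from now on we may assume $\rho(g) = \theta$ is irrational.

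Second, I would handle the irrational case. Since $g \in \Diff^2_+(S^1)$, Denjoy's theorem (Fact 3) provides $\phi \in \Homeo_+(S^1)$ with $\phi g \phi^{-1} = R_\theta$, the rigid rotation by $\theta$. Now let $h$ be any element of the centralizer of $g$ in $\Diff^2_+(S^1)$. Then $\phi h \phi^{-1}$ commutes with $\phi g \phi^{-1} = R_\theta$, and since any homeomorphism of $S^1$ commuting with an irrational rotation is itself a rotation (see \cite{Na}), $\phi h \phi^{-1}$ is a rotation. Hence $\phi$ conjugates the centralizer of $g$ into the rotation group, which is abelian, so the centralizer of $g$ is abelian. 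This is the second alternative, and combining the two cases proves the corollary.

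There is no real obstacle: the two nontrivial inputs, Denjoy's theorem and the rigidity of irrational rotations, are exactly what is quoted in Facts 2--3 and \cite{Na}. The only point worth a word of care is that the conjugating map $\phi$ furnished by Denjoy need not be smooth; this is harmless, since conjugation by a homeomorphism is a group isomorphism and we only use abelianness as an abstract group. In fact the same argument shows that in the irrational case the centralizer of $g$ even in $\Homeo_+(S^1)$ is abelian, and it is precisely the appeal to Denjoy that forces the $C^2$ (rather than merely $C^1$) hypothesis.
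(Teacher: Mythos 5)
Your proof is correct and follows exactly the paper's argument: split on whether $\rho(g)$ is rational (in which case Fact 2 gives a periodic point, so some $g^k$ has a fixed point) or irrational (in which case Denjoy's theorem conjugates $g$ to a rotation, and the quoted rigidity of irrational rotations forces the whole centralizer into the abelian rotation group). Your added remark that the Denjoy conjugacy need only be a homeomorphism is a fair point of care, and consistent with how the paper uses the statement.
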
 

This is the analogue of the H\"older-Kopell Lemma for $S^1$.  Now we state and prove our analog of Proposition \ref{simpler prop}.

\begin{proposition} \label{keypropS1}
Let $r \geq 2$ and let $G \subset \Diff^r_+(S^1)$ be nonabelian.  Let $H$ be contained in the centralizer of $G$.  Then either 
\begin{enumerate}[a)]
\item (H is abelian after powers) For each pair of elements $h_1, h_2 \in H$, there are integers $k_1$ and $k_2$ such that $h_1^{k_1}$ and $h_2^{k_2}$ commute, or 
\item (G fixes an interval after powers) There is a subinterval $J \subset S^1$ and for each $g \in G$ an integer $k_g$ such that $J$ is fixed pointwise by $g^{k_g}$.
\end{enumerate}
\end{proposition}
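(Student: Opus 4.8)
The plan is to mimic the proof of Proposition \ref{simpler prop}, using the $S^1$-version of the H\"older--Kopell Lemma (Corollary \ref{hks1}) in place of Corollary \ref{holderkopell}, with the dichotomy in the conclusion arising from the dichotomy in Corollary \ref{hks1}. First I would observe that since $G$ is nonabelian and $H$ lies in its centralizer, there must exist $g \in G$ whose centralizer is nonabelian (otherwise $H$ itself would be abelian, giving conclusion a) trivially). For such a $g$, Corollary \ref{hks1} forces some power $g^k$ to have a fixed point; by Fact 2 on rotation numbers this means $g^k$ can be regarded as a diffeomorphism of an interval (the complement of one of its fixed points, or rather of $\fix(g^k)$), and moreover $g^k$ commutes with everything $g$ commutes with, so $H \subset C(g^k)$.

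Next, the heart of the argument: I would run the interval-analysis from Proposition \ref{simpler prop} on the diffeomorphisms $g^{k_g}$ for $g \in G$. Pick any $g \in G$ and let $k = k_g$ be such that $g^k$ has a fixed point. Working on $S^1 \setminus \fix(g^k)$, which is a disjoint union of open intervals permuted by $g$ (but each invariant under $g^k$ after possibly passing to a further power — here one must be slightly careful, replacing $k$ by a multiple so that each component of the complement of $\fix(g^k)$ is genuinely $g^k$-invariant), I would argue exactly as before: for $h \in H$, the set $\fix(h)$ is $g^k$-invariant, so on a component $J$ of $S^1 \setminus \fix(g^k)$ either $h$ has no fixed point in $J$, or $J$ is $h$-invariant and Kopell's Lemma applied to $g^k|_{\bar J}$ (which is fixed-point-free on the interior of $J$) forces $h|_{\bar J}$ to be fixed-point-free too unless $h|_{\bar J} = \mathrm{id}$. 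In either case the endpoints of the components of $S^1 \setminus \fix(g^k)$ are fixed by every $h \in H$, and Kopell's Lemma (via the argument of Corollary \ref{holderkopell}) shows $H|_{\bar J}$ is abelian for each such component $J$. Hence $H$ restricted to $\bigcup_{g \in G}\overline{\operatorname{supp}(g^{k_g})}$ is, componentwise, a freely-acting group of interval diffeomorphisms, so by H\"older's Theorem it is abelian on each such piece.

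Then I would extract the dichotomy. If the union $\bigcup_{g \in G} \overline{\operatorname{supp}(g^{k_g})}$ is not all of $S^1$, its complement is a nonempty open subinterval $J$, and by construction each $g^{k_g}$ fixes $J$ pointwise --- this is conclusion b). Otherwise $H$ is abelian on a dense subset of $S^1$, but I cannot immediately conclude $H$ is abelian on all of $S^1$ because an element of $H$ with irrational rotation number escapes the interval analysis; so here I would invoke Corollary \ref{hks1} again, componentwise on pairs: given $h_1, h_2 \in H$, either some power of each has a fixed point, in which case the above shows suitable powers commute on the closure of the relevant supports and hence (by density and continuity, plus the fact that a diffeomorphism is determined on a dense set) commute everywhere; or one of them, say $h_1$, has all powers fixed-point-free, meaning $h_1$ has irrational rotation number, whence by Denjoy (Fact 3) $h_1$ is conjugate to an irrational rotation and anything commuting with it (in particular $h_2$, or a power thereof) is a rotation, so $h_1$ and $h_2$ commute. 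Either way conclusion a) holds.

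The main obstacle I anticipate is the bookkeeping with powers: passing from ``$g^{k}$ has a fixed point'' to ``$g^{k'}$ fixes each component of its complement pointwise'' requires replacing $k$ by a controlled multiple, and one must ensure the integers $k_g$ can be chosen coherently enough that the set $\bigcup_g \overline{\operatorname{supp}(g^{k_g})}$ behaves well and its complement, if nonempty, is pointwise fixed by the chosen powers. The second delicate point is the passage from ``abelian on a dense set'' to the genuine statement a): I need to handle the irrational-rotation-number elements of $H$ separately via Denjoy rather than hoping the interval argument covers them, and to confirm that ``two diffeomorphisms agreeing on a dense set'' really forces equality --- which it does, by continuity --- so that componentwise commuting of powers upgrades to global commuting of powers.
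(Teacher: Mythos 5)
Your strategy is the paper's: use Corollary \ref{hks1} to force rational rotation numbers, pass to powers with fixed points, cut the circle and rerun the interval argument of Proposition \ref{simpler prop}, then extract the dichotomy. Two steps, however, do not go through as written. The first is the extraction of case b). You claim that if $\bigcup_{g\in G}\supp(g^{k_g})\neq S^1$ then its complement is a nonempty \emph{open} interval. A union of closed sets need not be closed, so its complement need not be open, and even a nonempty complement need not contain an interval: the neighbourhoods of a point $x$ fixed by the various $g^{k_g}$ may shrink to $\{x\}$. When the union does equal $S^1$ you correctly get density of $\bigcup_g\bigl(S^1\setminus\fix(g^{k_g})\bigr)$ and hence case a), but your first case does not deliver case b), so the dichotomy as you set it up is not exhaustive. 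The correct dichotomy is taken pointwise in $h_1,h_2$: the set where $h_1^{k_1}$ and $h_2^{k_2}$ fail to commute is \emph{open}, and the interval analysis shows it is disjoint from $S^1\setminus\fix(g^{k_g})$ for every $g\in G$; so either it is empty (case a)), or it contains an open interval $J$ with $J\subset\fix(g^{k_g})$ for all $g$ (case b)). This is how the paper concludes.

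The second problem is the Denjoy step, which is circular: when $h_1$ has irrational rotation number you argue that ``anything commuting with it, in particular $h_2$, is a rotation,'' but $h_2$ is not assumed to commute with $h_1$ --- that is precisely what is to be proved. The correct use of Denjoy (i.e.\ of Corollary \ref{hks1}) is that irrational rotation number for $h_1$ would make $C(h_1)$, hence $G\subset C(h_1)$, abelian, contradicting the hypothesis; so every element of $H$ has rational rotation number and the irrational case never arises. Relatedly, in your second paragraph the assertion that the endpoints of components of $S^1\setminus\fix(g^{k})$ are fixed by \emph{every} $h\in H$ is too strong: an $h$ with no fixed point can permute those components nontrivially, and the endpoint argument (which runs through components of $S^1\setminus\fix(h)$ and therefore needs $\fix(h)\neq\emptyset$) applies only to the powers $h^{k_h}$ that have fixed points --- which is all the proposition requires.
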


\begin{proof}
Let $G$ and $H$ be as in the statement of Proposition \ref{keypropS1}.  
Let $g \in G$.  Since $H$ is nonabelian and centralizes $g$, Corollary \ref{hks1} implies that $g$ has rational rotation number.  Thus, after replacing $g$ with $g^{k_g}$ if necessary we can assume that $g$ has a fixed point.  We may also assume that $g$ was chosen so that $g^{k_g} \neq 1$, for if \emph{every} element of $G$ has finite order, then case b) holds. (In fact, in this case one can use H\"older's Theorem to show that $G$ is abelian).  

Now let $h_1$ and $h_2$ be any two elements of $H$.  Since $H$ and $G$ commute, Corollary \ref{hks1} implies again that $h_1$ and $h_2$ have rational rotation numbers, so there are integers $k_1$ and $k_2$ such that $h_1^{k_1}$ and $h_2^{k_2}$ have fixed points.  The same argument as in Proposition \ref{simpler prop} shows that endpoints of components of $S^1 \setminus \fix(g^{k_g})$ are fixed by each $h_i^{k_i}$.  Cutting $S^1$ at any one of the points of $\fix(g^{k_g})$ reduces the action of $g^{k_g}$, $h_1^{k_1}$ and $h_2^{k_2}$ to an action on the interval, and the proof of Proposition \ref{simpler prop} again shows that $h_1^{k_1}$ and $h_2^{k_2}$ must commute when restricted to $S^1 \setminus \fix(g^{k_g})$.  

Thus, if $h_1^{k_1}$ and $h_2^{k_2}$ do not commute everywhere, then there is some open set $J \subset S^1$ that is fixed pointwise by some iterate $g^{k_g}$ for every $g \in G$.   This is exactly what we wanted to prove.  

\end{proof}

\begin{remark} \label{rot0}
Note that the collection of elements $g^{k_g}$ that fix $J$ in case b) of Proposition \ref{keypropS1} are precisely the elements of $G$ with rotation number zero.  
In general, a collection of elements of $\Diff_+^2(S^1)$ that all have rotation number zero need not be a group, since the composition of two diffeomorphisms with fixed points need not have a fixed point.  But in this case the set of elements with rotation number zero it is equal to the subgroup of diffeomorphisms that fix $J$ pointwise.  
\end{remark}

We remark also that Proposition \ref{keypropS1} fails without the ``after powers" hypothesis -- there exist examples of two nonabelian, commuting subgroups of $S^1$, neither of which fixes any interval pointwise.  However, it would be interesting to know whether the ``after powers" condition could be replaced by ``virtually" (i.e. after taking a finite index subgroup).

\section{Flows and continuity} \label{cont}

Example \ref{noncont} in the introduction shows that not all $\R$-actions on a manifold are continuous.  Our goal in this section is to see (roughly) that whenever $M_1$ and $M_2$ are 1-manifolds, and $\Diff^r_c(M_1)$ includes in $\Diff^p_c(M_2)$, then $\R$ subgroups of $\Diff^r_c(M_1)$ \emph{do} act continuously -- at least on part of $M_2$.  

\begin{definition} By a \emph{flow} on a manifold $M$, we mean a continuous family of $C^r$-diffeomorphisms $\phi(t, x): \R \times I \to I$; in other words, an $\R$-subgroup of $\Diff^r(M)$ or $\Diff^r_c(M)$.  We denote $\phi(t, x)$ by $\phi^t(x)$, and by $\{\phi^t\}$ we mean the full $\R$-subgroup $\{\phi^t \in G : \, t\in \R\}$.  
\end{definition}

We will use the tools we developed in Section \ref{comm}, as well as the following further result in the spirit of H\"older-Kopell theory.  Keeping our notation from Section \ref{comm}, let $I:=[0,1)$.  We will also use $C_G(H)$ to denote the centralizer of a subgroup $H$ in a group $G$ and $C_G(g)$ for the centralizer of $g \in G$.  

\begin{theorem}[Szekeres, see 4.1.11 in \cite{Na}] \label{szek}
Let $r \geq 2$ and let $\psi \subset \Diff^r_+(I)$ be a diffeomorphism with no interior fixed points.  Then there is a unique flow $\psi^t$ of class $C^{r-1}$ on $(0,1)$ and $C^1$ on $I$ such that $\psi = \psi^1$ and the centralizer of $\psi$ in $\Diff^1_+(I)$ is $\{\psi^t\}$.  
\end{theorem}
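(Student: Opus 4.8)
This is a classical result of Szekeres (see also Sergeraert and Yoccoz), and essentially all of the content is in constructing the generating vector field; the centralizer statement then follows almost formally. After replacing $\psi$ by $\psi^{-1}$ if necessary — which merely negates the flow, $\psi^t \mapsto \psi^{-t}$, and leaves centralizers unchanged — I would assume $\psi(x) < x$ for all $x \in (0,1)$, so that the iterates $\psi^n(x)$ decrease monotonically as $n \to \infty$ to a fixed point of $\psi$ in $I$, which must be $0$. The candidate generator is the \emph{Szekeres vector field}
\[
\mathfrak{X}(x) := \lim_{n\to\infty}\mathfrak{X}_n(x), \qquad \mathfrak{X}_n(x) := \frac{\psi^n(x) - \psi^{n-1}(x)}{(\psi^{n-1})'(x)} .
\]
The heuristic behind this formula: if $\mathfrak{X}$ generates a flow with $\psi^1=\psi$, then $\mathfrak{X}$ is $\psi$-invariant, so $\mathfrak{X}(\psi^{n-1}(x)) = (\psi^{n-1})'(x)\,\mathfrak{X}(x)$, while $\psi^n(x)-\psi^{n-1}(x)=\psi(\psi^{n-1}(x))-\psi^{n-1}(x)\approx \mathfrak{X}(\psi^{n-1}(x))$ because $\psi$ is the time-one map and $\psi^{n-1}(x)$ is near the fixed point $0$; dividing gives $\mathfrak{X}_n(x)\to\mathfrak{X}(x)$.

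\textbf{Convergence --- where $C^2$ is used.} A mean value theorem computation gives
\[
\frac{\mathfrak{X}_{n+1}(x)}{\mathfrak{X}_n(x)} = \frac{\psi'(\eta_n)}{\psi'(\psi^{n-1}(x))}, \qquad \eta_n \in \bigl(\psi^n(x),\,\psi^{n-1}(x)\bigr),
\]
so $\bigl|\log\mathfrak{X}_{n+1}(x)-\log\mathfrak{X}_n(x)\bigr| \le \|(\log\psi')'\|_{L^\infty[0,x]}\bigl(\psi^{n-1}(x)-\psi^n(x)\bigr)$; here one uses that $\log\psi'$ is $C^1$, which is exactly the point at which $\psi\in C^2$ (with $\psi'>0$) enters. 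Summing over $n$ the right-hand side telescopes to $\|(\log\psi')'\|_{L^\infty[0,x]}\cdot x < \infty$, so $\mathfrak{X}_n(x)$ converges to a nonzero limit $\mathfrak{X}(x)$ at each $x\in(0,1)$. I would then upgrade this to $C^1$ convergence on $I$ by running analogous bounded-distortion estimates on $\mathfrak{X}_n'$, obtaining $\mathfrak{X}\in C^1(I)$ with $\mathfrak{X}(0)=0$, and bootstrap the same estimates with the $C^r$ hypothesis to get $\mathfrak{X}\in C^{r-1}((0,1))$. Since $\mathfrak{X}$ vanishes at $0$ and $I$ is half-open, its flow $\{\psi^t\}$ is complete, of class $C^{r-1}$ on $(0,1)$ and $C^1$ on $I$. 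That $\psi$ is actually the time-one map: the limit formula makes $\mathfrak{X}$ $\psi$-invariant, so $\psi$ commutes with $\{\psi^t\}$ and hence $\psi|_{(0,1)}=\psi^c|_{(0,1)}$ for one constant $c$ (the flow is a free transitive $\R$-action on $(0,1)$); the normalization built into $\mathfrak{X}_n$ forces $c=1$, which I would verify by passing to the time coordinate $T(x)=\int_{x_0}^{x}\tfrac{dy}{\mathfrak{X}(y)}$ and checking $T(\psi(x))-T(x)=1$.

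\textbf{The centralizer.} Let $g\in\Diff^1_+(I)$ commute with $\psi$, hence with every $\psi^k$. Using $g\circ\psi^k=\psi^k\circ g$ and the mean value theorem exactly as above,
\[
\mathfrak{X}(g(x)) = g'(x)\,\lim_{n\to\infty}\frac{g'(\xi_n)}{g'(\psi^{n-1}(x))}\,\mathfrak{X}_n(x), \qquad \xi_n\in\bigl(\psi^n(x),\,\psi^{n-1}(x)\bigr).
\]
Both $\xi_n$ and $\psi^{n-1}(x)$ converge to the fixed point $0\in I$, and $g$ is $C^1$ at $0$ with $g'(0)>0$, so the ratio $g'(\xi_n)/g'(\psi^{n-1}(x))\to 1$; hence $\mathfrak{X}(g(x))=g'(x)\mathfrak{X}(x)$, i.e. $g_*\mathfrak{X}=\mathfrak{X}$. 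Therefore $g$ commutes with $\{\psi^t\}$, and since the flow is a free transitive $\R$-action on $(0,1)$, $g$ acts there as a translation in the time coordinate, i.e. $g|_{(0,1)}=\psi^{t_0}|_{(0,1)}$ for a fixed $t_0$; continuity extends this to $g=\psi^{t_0}$ on $I$. The reverse inclusion $\{\psi^t\}\subseteq C_{\Diff^1_+(I)}(\psi)$ is clear, giving equality, and uniqueness of the flow follows at once: any flow $\{\tilde\psi^s\}$ with $\tilde\psi^1=\psi$ lies in this centralizer, so $\tilde\psi^s=\psi^{\tau(s)}$ for a continuous additive homomorphism $\tau\colon\R\to\R$ with $\tau(1)=1$, whence $\tau=\mathrm{id}$.

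\textbf{Main obstacle.} The genuinely difficult step is the $C^1$ (and, for the higher regularity, $C^{r-1}$) convergence of the $\mathfrak{X}_n$: the pointwise estimate above already pinpoints the role of $C^2$ and why the telescoping sum converges, but controlling the derivatives of the iterates $\psi^n$ uniformly near the fixed point requires the distortion estimates that form the technical heart of the theorem. Everything else — the time-one normalization and the centralizer computation — is then a fairly direct consequence of the canonical limit formula for $\mathfrak{X}$.
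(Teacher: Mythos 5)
The paper does not actually prove this statement: it is quoted as a classical theorem of Szekeres with a pointer to Theorem 4.1.11 of Navas's book, so there is no internal argument to compare yours against. Judged on its own terms, your outline follows the standard route (Szekeres/Sergeraert/Yoccoz, as presented in Navas): build the generating vector field $\mathfrak{X}$ as a limit of renormalized displacements along the orbit, then deduce the centralizer statement from $g_*\mathfrak{X}=\mathfrak{X}$. The parts you carry out are correct. The ratio identity $\mathfrak{X}_{n+1}/\mathfrak{X}_n=\psi'(\eta_n)/\psi'(\psi^{n-1}(x))$ and the telescoping bounded-distortion sum do give locally uniform convergence of $\log\mathfrak{X}_n$ on $(0,1)$, and they correctly isolate where the $C^2$ hypothesis enters. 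The centralizer computation is also right, including the key observation that only $C^1$-regularity of $g$ at the fixed boundary point is needed to force $g'(\xi_n)/g'(\psi^{n-1}(x))\to 1$ — which is exactly why the centralizer may be taken in $\Diff^1_+(I)$ rather than $\Diff^2_+(I)$. The normalization $c=1$, the free transitive action on $(0,1)$, and the uniqueness of the flow all follow as you say.

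The one genuine gap is the regularity of the limit: you assert, but do not prove, that $\mathfrak{X}$ is $C^1$ up to $x=0$ (with $\mathfrak{X}(0)=0$) and $C^{r-1}$ on the interior. This is not a routine rerun of the pointwise estimate; it requires uniform control of the nonlinear distortion $(\psi^{n})''/(\psi^{n})'$ along orbits accumulating at the fixed point, and it is precisely this step that makes Szekeres' theorem the "deeper result" the paper says it is, compared with Kopell and H\"older. Without $C^1$-ness of $\mathfrak{X}$ at $0$ you also cannot conclude that the time-$t$ maps extend to $C^1$ diffeomorphisms of $I=[0,1)$, which the statement claims. You flag this honestly as the main obstacle, but as written the argument is a correct strategy with the technical heart deferred rather than a complete proof.
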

Compare this theorem with the H\"older-Kopell Lemma (Corollary \ref{holderkopell}).  Under the same hypotheses, the H\"older-Kopell Lemma concludes that the centralizer of $\psi$ in $\Diff^r_+(I)$ is abelian.  Using Szekeres, we can conclude that $C_{\Diff^r_+(I)}(\psi)$ is a subgroup of a $C^1$ flow on $I$.

\boldhead{Flows on the interval}
Though our ultimate goal is a theorem about $\Diff^r_+(S^1)$ and $\Diff^r_c(\R)$, we first prove a result about flows in $\Diff^r_+(I)$.  This allows us to use Szekeres' theorem in a straightforward way, and to make the simplifying assumption that our flows have full support.   

\begin{proposition} \label{flows}
Let $p, r \geq 2$ and let $\Phi: \Diff^r_+(I) \to \Diff^p_+(I)$ be an injective
 group homomorphism.  Let $\{\phi^t\} \subset \Diff^r_+(I)$ be a flow that is fixed point free on $(0,1)$, and suppose that there is some $s$ such that $\Phi(\phi^s)$ is fixed point free on $(0,1)$.  
Then $\Phi(\{\phi^t\})$ is continuous in $t$.  Moreover, there is a flow $\{\psi^t\}$ such that $\Phi(\phi^t) = \psi^{\lambda t}$ for some $\lambda \in \R$.  
\end{proposition}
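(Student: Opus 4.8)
The plan is to use Szekeres' theorem (Theorem~\ref{szek}) to pin down the image of the flow $\{\phi^t\}$ as a subgroup of a genuine $C^1$ flow, and then to show that the homomorphism, restricted to this flow, is a continuous (hence linear) embedding of $\R$. First I would fix the element $\psi := \Phi(\phi^s)$, which by hypothesis is fixed point free on $(0,1)$. By Szekeres' theorem, there is a unique $C^1$ flow $\{\psi^u\}$ on $I$ (of class $C^{p-1}$ on the interior) with $\psi^1 = \psi$, and moreover $C_{\Diff^1_+(I)}(\psi) = \{\psi^u\}$. Now every $\phi^t$ commutes with $\phi^s$, so every $\Phi(\phi^t)$ commutes with $\psi = \Phi(\phi^s)$; since $\Phi(\phi^t) \in \Diff^p_+(I) \subset \Diff^1_+(I)$, Szekeres' uniqueness forces $\Phi(\phi^t) = \psi^{u(t)}$ for a unique real number $u(t)$. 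This already gives a group homomorphism $u: \R \to \R$ (uniqueness of the exponent in the flow shows $u(t_1 + t_2) = u(t_1) + u(t_2)$), and injectivity of $\Phi$ makes $u$ injective. At $t = s$ we have $u(s) = 1$.

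The heart of the matter is then to show that $u$ is continuous, for then $u$ is $\R$-linear, $u(t) = \lambda t$ with $\lambda = 1/s$, and $\Phi(\phi^t) = \psi^{\lambda t}$, giving both continuity in $t$ and the desired form (after relabelling $\psi^u$ as the flow in the statement). Since an additive homomorphism $\R \to \R$ is continuous as soon as it is measurable, or bounded on a set of positive measure, or monotone on an interval, it suffices to establish any one of these weak regularity properties. The natural route is monotonicity: I would try to show that $t \mapsto u(t)$ is monotone by comparing the dynamics of $\phi^t$ on $(0,1)$ (it moves every point monotonically in $t$, say toward $1$) with the dynamics of $\psi^u$ on $(0,1)$ (likewise monotone in $u$). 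Concretely, pick a point $x_0 \in (0,1)$; the map $t \mapsto \phi^t(x_0)$ is a continuous strictly monotone bijection onto $(0,1)$, and similarly $u \mapsto \psi^u(y_0)$ for $y_0 \in (0,1)$. The relation $\Phi(\phi^t) = \psi^{u(t)}$ does \emph{not} directly relate orbits of a single point (there is no reason $\Phi$ maps $x_0$ anywhere canonically), so instead I would use the order structure on the flow itself: $\phi^{t_1}(x) < \phi^{t_2}(x)$ for all $x$ iff $t_1 < t_2$, and the analogous statement for $\psi^u$; combined with the fact that $\phi^{t_1} \le \phi^{t_2}$ pointwise implies $\Phi(\phi^{t_1})$ and $\Phi(\phi^{t_2})$ are comparably ordered — this last implication needs an argument, perhaps via positivity of the commutator $\phi^{t_2}(\phi^{t_1})^{-1}$ acting freely on $(0,1)$ and applying Kopell/Szekeres once more to $\Phi$ of it. Establishing that $\Phi$ respects this partial order on the flow is where I expect the real work to lie.

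Once monotonicity of $u$ is in hand, standard facts about additive maps $\R \to \R$ finish the argument: a monotone additive function is linear, so $u(t) = \lambda t$ with $\lambda = u(1) \neq 0$ (nonzero by injectivity), and then $\Phi(\phi^t) = \psi^{\lambda t}$ depends continuously on $t$ because $\{\psi^u\}$ is a genuine (continuous) flow.

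\medskip
\noindent\textbf{Main obstacle.} The main difficulty is not the existence of the exponent function $u$ (that is immediate from Szekeres' uniqueness of centralizers), but its \emph{continuity}: a priori $u$ could be one of the pathological additive automorphisms of $\R$ as in Example~\ref{noncont}. Ruling this out requires extracting some order-preservation or measurability of $\Phi$ along the flow from purely algebraic hypotheses, and the cleanest path seems to be to show $\Phi$ preserves the natural pointwise partial order on $\{\phi^t\}$ — which in turn should follow from the fact that if $\phi^{t}$ has no interior fixed points and moves points "up", then $\Phi(\phi^t)$, being free on $(0,1)$ (a consequence of $\Phi(\phi^t)$ commuting with the free element $\psi$ and Kopell's Lemma), also moves points "up", consistently across $t$. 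Making this comparison precise, and in particular handling the case $t$ of opposite sign, is the step I'd allocate the most care to.
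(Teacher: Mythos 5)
Your setup coincides with the paper's: applying Szekeres' theorem to $\psi = \Phi(\phi^s)$ gives $\Phi(\phi^t) = \psi^{u(t)}$ for an additive $u:\R\to\R$, and everything reduces to showing $u$ is continuous. But the route you propose for that step has a genuine gap, and I do not believe it can be closed as stated. You want monotonicity of $u$ via the claim that $\Phi$ preserves the pointwise order on the flow, and the mechanism you offer is that $\Phi(\phi^t)$ acts freely on $(0,1)$ (Kopell, since it commutes with the free element $\psi$). Freeness, however, only tells you $u(t)\neq 0$ for $t\neq 0$ --- which you already have from injectivity --- and carries no information about the \emph{sign} of $u(t)$. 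Nothing you use distinguishes the positive from the negative half of the one-parameter group $\{\phi^t\}$ in a way $\Phi$ must respect: the abstract group $(\R,+)$ has additive bijections that scramble the order, and ruling out that $\Phi$ realizes one of them is precisely the content of the proposition. Any proof must therefore invoke more of the ambient group $\Diff^r_+(I)$ than the flow itself together with Kopell/Szekeres applied to its elements; your sketch never does this, and you yourself flag this step as the one where ``the real work'' lies.

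The paper supplies exactly this extra input, and by a different criterion than monotonicity: boundedness of $u$ near $0$. Lemma \ref{detect} shows $u$ is continuous iff $\psi^{u(t)}(V)\cap V\neq\emptyset$ for all sufficiently small $t$, for some open $V$ bounded away from the endpoints. The intersection condition ``$U\cap V$ contains an open set'' is encoded algebraically as ``$\langle G^U, G^V\rangle$ has nonabelian centralizer'' (Corollary \ref{detect cor}, resting on Proposition \ref{simpler prop}), a property manifestly preserved by $\Phi$. Since $\phi^t(U)\cap U\neq\emptyset$ for small $t$ by continuity of the source flow, one deduces $V\cap\psi^{u(t)}(V)\neq\emptyset$ for $V=\interior\left(\fix\Phi(G^U)\right)$, and the fragmentation property (Theorem \ref{frag}) shows $V$ is bounded away from $0$ and $1$. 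If you want to complete your outline, some such conversion of an open-set intersection statement into an algebraic one detectable through $\Phi$ is the missing ingredient; the order-theoretic route you sketch does not provide it.
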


The proof of Proposition \ref{flows} has several steps.  We use Szekeres to find a candidate flow $\{\psi^t\}$ and reduce the question to continuity of a group homomorphism $\R \to \R$.  We next establish a simple criterion for continuity, and then relate this criterion to fixed sets and commutators and apply our work from Section \ref{comm}.  
Our main results of this section (Propositions \ref{rflow} and \ref{sflow}, stated later) are essentially built from this proof with the assumption of full supports removed.

\begin{proof}[Proof of Proposition \ref{flows}]
To simplify notation let $G = \Diff^r_+(I)$.   
Suppose $\Phi: G \to G$ is injective.  Let $\{\phi^t\} \subset G$ be a flow with no fixed points in $(0,1)$ and assume that $\psi := \Phi(\phi^s)$ has no fixed points in $(0,1)$.  Since $\{\phi^t\}$ is contained in  $C_G(\phi^1)$, its image $\{\Phi(\phi^t): t \in \R\}$ is contained in $C_{\Phi(G)}(\psi)$.  By Szekeres' theorem, there is a $C^1$ flow $\{\psi^t\}$ on $I$ such that $\{\Phi(\phi^t)\} \subset \{\psi^t\}$.  Moreover, that $\Phi$ is a group homomorphism implies that $\Phi(\phi^t) = \psi^{\alpha(t)}$ for some additive group homomorphism $\alpha: \R \to \R$.  Thus, we now only need to show that $\alpha$ is continuous.  

We can detect continuity in a very simple way using intersections of open sets.  Essentially, $\alpha$ being continuous just means that if $t$ is small enough then $\phi^{\alpha(t)}$ won't push an open set off of itself.  Formally, we have

\begin{lemma}[\textbf{Continuity criterion}]  \label{detect}
Let $\{\psi^t\}$ be a flow on $(0,1)$ such that $\psi^t$ is fixed point free for some (hence all) $t \neq 0$.  Let $\alpha: \R \to \R$ be a group homomorphism, and let $U \subset (0,1)$ be an open set that is bounded away from $0$ and $1$.  Then $\alpha$ is continuous if and only if there is some $\delta > 0$ such that $(\psi^{\alpha(t)}(U)) \cap U \neq \emptyset$ for all $t < \delta$.  
\end{lemma}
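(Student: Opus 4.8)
The plan is to prove both directions of the equivalence, the forward one being essentially immediate and the reverse one the substance of the lemma.

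\textbf{The easy direction.} If $\alpha$ is continuous, then since $\psi^0 = \mathrm{id}$ and $\psi$ is a flow (continuous in the time parameter), the map $t \mapsto \psi^{\alpha(t)}$ is continuous in the $C^0$ topology near $t = 0$. Pick any point $x_0 \in U$; since $U$ is open and $\psi^{\alpha(t)}(x_0) \to x_0$ as $t \to 0$, there is $\delta > 0$ with $\psi^{\alpha(t)}(x_0) \in U$ for $|t| < \delta$, and hence $x_0 \in \psi^{\alpha(t)}(U) \cap U$, so this intersection is nonempty. (One should be slightly careful about what ``continuous in $t$'' means for the flow, but uniform continuity on the compact set $\bar U$, which is where $U$ being bounded away from the endpoints is used, makes this clean.)

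\textbf{The hard direction.} Assume there is $\delta > 0$ with $\psi^{\alpha(t)}(U) \cap U \neq \emptyset$ for all $|t| < \delta$; we must show $\alpha$ is continuous, equivalently (for an additive homomorphism $\R \to \R$) that $\alpha$ is bounded on some neighborhood of $0$, equivalently $\alpha$ is linear, i.e. $\alpha(t) = \lambda t$. The key geometric input is that a fixed-point-free flow $\psi^s$ on $(0,1)$ moves points monotonically: for each $x \in (0,1)$ the orbit $s \mapsto \psi^s(x)$ is a strictly monotone homeomorphism onto $(0,1)$ (or onto an orbit closure; in any case strictly monotone). Fix $x_0 \in U$ and let $(a,b) \subset (0,1)$ be the connected component of $U$ containing $x_0$; since $\bar U$ is bounded away from $\{0,1\}$, the orbit of $x_0$ eventually leaves $\bar U$, so there exist finite $s^- < 0 < s^+$ with $\psi^{s^+}(x_0) = b$ and $\psi^{s^-}(x_0) = a$ roughly speaking — more precisely, $\{s : \psi^s(x_0) \in (a,b)\}$ is a bounded open interval $(s^-, s^+)$ around $0$. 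Now I claim $\psi^{\alpha(t)}(U) \cap U \neq \emptyset$ forces $\alpha(t) \in (s^- - s^+, s^+ - s^-)$ or some similar bounded range: if $\psi^{\alpha(t)}(U) \cap U \ni y$, then $y$ and $\psi^{-\alpha(t)}(y)$ both lie in $U$, hence (by monotonicity of orbits and the fact that $U$ lies inside a bounded band of flow-time) $\alpha(t)$ cannot be too large in absolute value — one bounds the flow time needed to travel across $U$. The cleanest way to make this precise: choose $U$ small enough that it is contained in a single ``flow box,'' i.e. $U \subset \psi^{(-\epsilon,\epsilon)}(U_0)$ for a small transversal-type set; then $\psi^{\alpha(t)}(U) \cap U \neq \emptyset$ implies $|\alpha(t)| < 2\epsilon$. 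Thus $\alpha$ is bounded by $2\epsilon$ on $(-\delta,\delta)$, and a bounded additive homomorphism $\R \to \R$ is automatically continuous (indeed linear), so $\alpha(t) = \lambda t$ and $\Phi(\phi^t) = \psi^{\lambda t}$ is continuous.

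\textbf{Main obstacle.} The real work is the quantitative statement ``$\psi^{\alpha(t)}(U)\cap U\neq\emptyset$ forces $|\alpha(t)|$ small.'' This needs the flow to genuinely translate points, and it needs $U$ to have finite ``flow-diameter'' — which is exactly why the hypotheses that $\psi^t$ is fixed-point-free on all of $(0,1)$ and that $U$ is bounded away from the endpoints are both essential (near a fixed point, or near an endpoint where orbits accumulate, arbitrarily large flow times keep points inside a fixed open set, and the argument collapses). So the crux is setting up a flow-box / fundamental-domain picture for the fixed-point-free flow on the band containing $\bar U$ and reading off the bound on $|\alpha(t)|$; once that bound is in hand, the passage from ``bounded near $0$'' to ``continuous'' is the standard fact about additive homomorphisms of $\R$.
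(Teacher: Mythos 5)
Your proof is correct and takes essentially the same route as the paper: both directions rest on the standard fact that an additive homomorphism $\R\to\R$ is continuous iff it is bounded near $0$, together with the geometric observation that a fixed-point-free flow pushes the compact set $\bar U\subset(0,1)$ entirely off itself (toward an endpoint) once $|s|$ exceeds some finite $T$; the paper phrases this contrapositively while you argue directly via boundedness. (One small caution: you cannot ``choose $U$ small enough'' to lie in a flow box since $U$ is given in the statement, but your compactness argument --- that $\bar U$ lies in a band of bounded flow-time --- already handles the general case, so nothing is lost.)
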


\begin{proof}
One direction is almost immediate: if $\alpha$ is a continuous $\R \to \R$ homomorphism, then $\alpha(t) = \lambda t$ for some $\lambda \in \R$ (easy exercise).  Given $\delta$ sufficiently small and $U$ open, for any $t < \delta$ we have $\psi^{\lambda t}(U) \cap U \neq \emptyset$.

For the converse, suppose that $\alpha$ is not continuous.  In this case, using the fact that $\alpha$ is an additive group homomorphism it is elementary to show that for any $T > 0$ there are arbitrarily small $t \in \R$ such that $|\alpha(t)| > T$.   Suppose $U$ is bounded away from $0$ and from $1$.  Since $\psi^t$ has no fixed points in $(0,1)$, for $T$ large enough, $\psi^{s}(U)$ will either be contained in a small neighborhood of 0 or a small neighborhood of 1 whenever $|s| > T$.  We can ensure this neighborhood is disjoint from $U$ by taking $T$ large.  In particular, when $|\alpha(t)|>T$ we will have $\psi^{\alpha(t)}(U) \cap U = \emptyset$.  
\end{proof}

The next step in the proof of Proposition \ref{flows} is to detect intersections of sets algebraically (i.e. through the algebraic structure of the diffeomorphism group), using the results of Section \ref{comm}.   The following notation will be useful.

\begin{notation}  For a group $G$ of diffeomorphisms of a manifold $M$ and a subset $U \subset M$, let 
$$G^U := \{g \in G : g(x) = x \text{ for all } x \in U\}.$$  
\end{notation}

The link between set intersections, subgroups, and centralizers comes from an elementary corollary of Proposition \ref{simpler prop}.
\begin{corollary} \label{detect cor}
Let $U \subset I$ and $V \subset I$ be closed subsets properly contained in $I$.  The group $\langle G^U, G^V \rangle$ generated by $G^U$ and $G^V$ has nonabelian centralizer if and only if $U \cap V$ contains an open set.  
\end{corollary}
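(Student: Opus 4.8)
The plan is to establish the two directions of the equivalence separately, in both cases reducing to Proposition \ref{simpler prop} (together with its corollary \ref{simpler prop cor}). First I would record the two elementary facts I will lean on: (i) for any closed set $U \subsetneq I$, the subgroup $G^U$ of diffeomorphisms fixing $U$ pointwise is nonabelian — indeed $I \setminus U$ contains an open interval, and $\Diff_+^r$ of an interval is highly nonabelian; and (ii) $\fix(G^U)$ is \emph{exactly} $U$ is generally false, but what is true and all I need is that $\fix(G^U) \supseteq U$ and that $G^U$ contains diffeomorphisms supported in any prescribed component of $I \setminus U$, so $\fix(G^U)$ contains no open subinterval of $I \setminus U$; hence the interior of $\fix(G^U)$ equals the interior of $U$. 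The same applies to $V$.

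For the ``if'' direction: suppose $U \cap V$ contains an open interval $W$. Then every element of $G^U$ and every element of $G^V$ fixes $W$ pointwise, so $\langle G^U, G^V\rangle$ fixes $W$ pointwise. Therefore the centralizer of $\langle G^U, G^V\rangle$ contains the subgroup of all diffeomorphisms supported in $W$ (by the trivial Observation at the start of Section \ref{comm}), which is nonabelian since $W$ is an open interval. So $\langle G^U, G^V\rangle$ has nonabelian centralizer.

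For the ``only if'' direction: set $\Gamma := \langle G^U, G^V\rangle$ and suppose $\Gamma$ has nonabelian centralizer $H$. I first need $\Gamma$ itself to be nonabelian so that Proposition \ref{simpler prop} applies: this holds because $G^U$ alone is nonabelian (fact (i)), being a subgroup of $\Gamma$. Now Proposition \ref{simpler prop} (in the $\Diff^r(I)$ case, or via Observation \ref{embed observation} if one prefers to phrase things on $\R$) yields an open interval $W \subseteq \fix(\Gamma)$. But $\fix(\Gamma) = \fix(G^U) \cap \fix(G^V)$, and by fact (ii) the interior of this set is $\interior(U) \cap \interior(V) \subseteq U \cap V$. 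Since $W$ is a nonempty open interval contained in $\fix(\Gamma)$, it lies in this interior, so $U \cap V$ contains an open set, as desired.

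The main obstacle I anticipate is the bookkeeping in fact (ii): pinning down that the interior of $\fix(G^U)$ is precisely $\interior(U)$ — i.e., that fixing $U$ pointwise forces no \emph{extra} interval to be fixed by the whole subgroup $G^U$. This is where one genuinely uses that $G^U$ is all compactly-supported $C^r$ diffeomorphisms of each complementary interval (so for any point of $I \setminus U$ there is an element of $G^U$ moving it), rather than an arbitrary group; it is routine but should be stated carefully. Everything else — the nonabelianness of $G^U$, the application of the Observation, and the appeal to Proposition \ref{simpler prop} — is immediate.
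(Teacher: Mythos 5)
Your proposal is correct and follows essentially the same route as the paper: the ``if'' direction via the subgroup of diffeomorphisms supported in an open subset of $U \cap V$, and the ``only if'' direction via Proposition \ref{simpler prop} applied to $\langle G^U, G^V\rangle$, noting that the resulting fixed open interval must lie in $U \cap V$. Your ``fact (ii)'' just makes explicit a step the paper leaves implicit (that $G^U$ moves every point of $I \setminus U$ away from the boundary point, so no extra open set is fixed), which is a reasonable bit of added care.
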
 

\begin{proof} 
The condition that $U$ and $V$ are closed and properly contained in $I$ implies that $G^U$ and $G^V$ are nontrivial and in this case nonabelian.  If $\langle G^U, G^V \rangle$ has nonabelian centralizer, then by Proposition \ref{simpler prop}
$\fix(\langle G^U, G^V \rangle)$ contains an open subset of $I$.  This subset must be fixed pointwise by both $G^U$ and $G^V$, so is a subset of $U \cap V$.  

Conversely, if $U \cap V$ contains an open set $J$, then the group of all diffeomorphisms supported on $J$ commutes with $G^U$ and $G^V$, so commutes with $\langle G^U, G^V \rangle$.  
\end{proof}

Now we can begin the real work of the proof of Proposition \ref{flows}, using our continuity criterion.  Let $U \subset I$ be an open set bounded away from 0 and 1.
Since $\phi^t$ is continuous in $t$, there is some $\delta > 0$ such that for all $t<\delta$ we have $\phi^t(U) \cap U \neq \emptyset$.   By Corollary \ref{detect cor}, this is equivalent to the statement that $\langle G^U, G^{\phi^t(U)} \rangle$ has nonabelian centralizer.  
We want to translate this into a statement about $\Phi(G^U)$ and $\psi^{\alpha(t)}$.  

To do this, note first that 
\begin{align} \label{conj}
G^{\phi^t(U)} := &\{g \in G : g(x) = x \text{ for all } x \in \phi^t(U)\} \notag \\ 
= &\{\phi^t g \phi^{-t}  : g(x) = x \text{ for all } x \in U\}  \\
= &\phi^t(G^U) \phi^{-t}  \notag
\end{align}

and more generally, for any group of diffeomorphisms $H$ and any diffeomorphism $g$, we have
\begin{equation} \label{benson}
g \fix (H) = \fix(g H g^{-1})
\end{equation} 

Now consider $\Phi(G^U)$.  Since $G^U$ has nonabelian centralizer, $\Phi(G^U)$ has nonabelian centralizer as well.  By Proposition \ref{simpler prop}, $\fix \Phi(G^U)$ contains an open set.  
Using (\ref{conj}), we have 
\begin{equation*}
\Phi(G^{\phi^t(U)}) = \Phi(\phi^t)(\Phi(G^U) \Phi(\phi^{-t})= \psi^{\alpha(t)}(\Phi(G^U))\psi^{-\alpha(t)} 
\end{equation*}  

and letting $U'$ denote $\fix \Phi(G^U)$, it follows from (\ref{benson}) that 
\begin{equation}  \label{4}
\fix \Phi(G^{\phi^t(U)})) = \psi^{\alpha(t)}({U'}).
\end{equation}

For any $t < \delta$, consider the subgroup 
$$H_t:= \langle \Phi(G^U),\, \Phi(G^{\phi^t(U)}) \rangle = \langle \Phi(G^U),\, \psi^{\alpha(t)}(\Phi(G^U))\psi^{-\alpha(t)}) \rangle$$
Using (\ref{4}) we have
$$\fix(H_t) = \fix(\Phi(G^U)) \cap \fix(\psi^{\alpha(t)}(\Phi(G^U))\psi^{-\alpha(t)}) = U' \cap  \psi^{\alpha(t)}({U'})$$

 \begin{figure*}
   \labellist 
  \small\hair 2pt
   \pinlabel $U$ at 105 145 
   \pinlabel $\phi^t(U)$ at 160 145
   \pinlabel $V=\interior \left(\fix(\Phi((G^U))\right)$ at 205 095
   \pinlabel $\psi^{\alpha(t)}(V)$ at 245 035
   \endlabellist
  \centerline{
    \mbox{\includegraphics[width=4in]{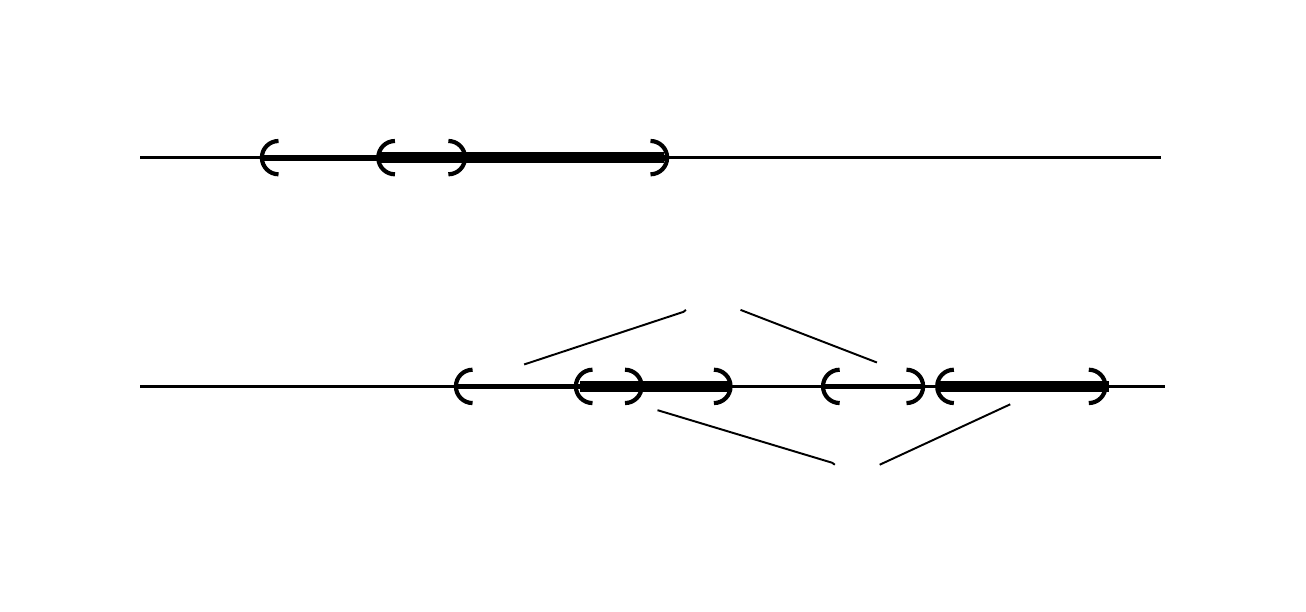}}}
 \caption{Continuity of $\alpha$ detected by intersections of open sets.}
  \label{contfig}
  \end{figure*}

$H_t$ also has nonabelian centralizer, so by Proposition \ref{simpler prop}, $\fix(H_t) = U' \cap  \psi^{\alpha(t)}({U'})$ contains an open set.   If $V = \interior(U')$,  we know that $V \cap \psi^{\alpha(t)}(V)$ contains an open set as well.   (A picture of the situation so far is given in Figure \ref{contfig}.)   If we knew further that $V$ was bounded away from $0$ and $1$, then we could apply Lemma \ref{detect} to $V$ and conclude that $\alpha$ is continuous.   

This would finish our proof.  Thus, it remains only to show that $V$ is bounded away from $0$ and $1$.  This is not hard to see, but uses a well-known theorem about diffeomorphism groups called the \emph{fragmentation property}. 

\begin{theorem}[\textbf{Fragmentation property}] \label{frag}
Let $M$ be a manifold and $\mathcal{B}$ an open cover of $M$.  Then the group $\Diff^r(M)_0$ is generated by the set $\{g \in \Diff^r(M)_0 : \supp(g) \in B \text{ for some } B \in \mathcal{B} \}$.  
\end{theorem}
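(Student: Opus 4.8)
The fragmentation property is classical (it goes back to Palais and appears in Banyaga's book); I would prove it in two stages, first reducing to an ``infinitesimal'' statement near the identity and then proving that statement by induction on the size of a finite subcover. Throughout I treat the case needed in the sequel, in which either $M$ is compact or we restrict to diffeomorphisms supported in a fixed compact set $K\subset M$; the statement for $\Diff^r(M)_0$ in general follows in the same way after replacing $M$ by a compact set containing the support of a chosen isotopy.

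First I would carry out the reduction. Given $g\in\Diff^r(M)_0$, pick a smooth isotopy $g_t$, $t\in[0,1]$, with $g_0=\mathrm{id}$ and $g_1=g$, supported in a compact $K$, and let $V_t:=\dot g_t\circ g_t^{-1}$ be its time-dependent generating vector field, so $\sup_{t,x}\|V_t(x)\|\le C$ for some constant $C$ and $\supp V_t\subset K$. For a partition $0=t_0<t_1<\dots<t_N=1$ set $h_i:=g_{t_i}g_{t_{i-1}}^{-1}$; then $g=h_N\circ\cdots\circ h_1$ because $g_{t_0}=\mathrm{id}$. The reparametrised sub-isotopy $s\mapsto g_{\tau(s)}g_{t_{i-1}}^{-1}$, with $\tau(s)=t_{i-1}+s(t_i-t_{i-1})$, joins $\mathrm{id}$ to $h_i$ and a short computation shows its generating field is $(t_i-t_{i-1})\,V_{\tau(s)}$, of sup-norm at most $(t_i-t_{i-1})C$ and still supported in $K$. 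Hence by refining the partition we may assume each $h_i$ admits an isotopy to the identity with generating field of sup-norm $<\epsilon$ and supported in $K$, for any prescribed $\epsilon>0$. It therefore suffices to prove: for a suitable $\epsilon>0$ (depending on $\mathcal B$ and $K$), every such $h$ is a product of diffeomorphisms each supported in an element of $\mathcal B$.

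To prove this local statement, fix a finite subcover $U_1,\dots,U_k\in\mathcal B$ of $K$ and induct on $k$. For $k=1$ we have $\supp h\subset K\subset U_1$ and there is nothing to do. For the inductive step, choose a shrinking $\bar V_i\subset U_i$ with $\{V_i\}$ still covering $K$, and a bump function $\lambda$ equal to $1$ on a neighbourhood $N$ of the compact set $L:=K\setminus(V_1\cup\cdots\cup V_{k-1})\subset V_k$ and supported in $V_k$. Let $h_s$ be the given isotopy of $h$, with generating field $Y_s$, $\|Y_s\|<\epsilon$, and let $h''$ be the time-$1$ map of the flow of $\lambda Y_s$; then $\supp h''\subset V_k\subset U_k$. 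If $\epsilon$ is small relative to the distance from $L$ to the complement of $N$, trajectories of this flow that start on a smaller neighbourhood $N'$ of $L$ remain inside $N$, where $\lambda\equiv1$, so there the flow of $\lambda Y_s$ and the flow of $Y_s$ solve the same ODE with the same initial data; hence $h''=h$ on $N'$. Consequently $h':=(h'')^{-1}h$ is the identity on $N'\supset L$, so $\supp h'\subset K\setminus L\subset\bar V_1\cup\cdots\cup\bar V_{k-1}\subset U_1\cup\cdots\cup U_{k-1}$, and $h'$ again admits a short isotopy to the identity with small generating field (either compute the generating field of the composition, or note $h'$ is $C^1$-close to $\mathrm{id}$ and use a canonical isotopy). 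Applying the inductive hypothesis to $h'$ with the cover $U_1,\dots,U_{k-1}$ of the \emph{fixed} compact set $\bar V_1\cup\cdots\cup\bar V_{k-1}$ fragments $h'$, and then $h=h'\circ h''$ is the desired product; the threshold $\epsilon=\epsilon_k$ is chosen small enough to give the support control above and to force $h'$ below the (smaller) threshold $\epsilon_{k-1}$.

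The main obstacle is precisely this support control: the flow of the localised field $\lambda Y_s$ can in principle drag points across $\partial V_k$ (or out of the region where $\lambda\equiv1$), so that $h''$ fails to match $h$ near $L$ and $h'$ fails to be supported in $U_1\cup\cdots\cup U_{k-1}$. This is what forces the passage to diffeomorphisms close to the identity and the descending choice of the thresholds $\epsilon_k$; since there are only finitely many pieces $h_i$ and finitely many induction steps, the bookkeeping closes. A secondary subtlety, also built into the argument above, is to apply the inductive hypothesis on a compact set ($\bar V_1\cup\cdots\cup\bar V_{k-1}$) fixed in advance rather than on the varying set $\overline{\supp h'}$, so that the threshold $\epsilon_{k-1}$ does not depend on $h$.
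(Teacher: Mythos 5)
The paper does not actually prove this statement: it is quoted as a classical fact, with the proof deferred to Chapter~2 of Banyaga's book \cite{Ba}. Your argument is precisely the standard proof found there --- subdivide an isotopy so that each factor has small generating vector field, then localize that field with bump functions subordinate to a finite subcover and induct on the number of cover elements --- and it is correct, with the support control and descending thresholds $\epsilon_k$ that you flag being exactly the points that require care.
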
 
Recall that $\supp(g)$ is the \emph{support} of $g$, the closure of $M \setminus \fix(g)$.  A proof of Theorem \ref{frag} can be found in Chapter 2 of \cite{Ba}.  

Since $\{\phi^t\}$ is a flow, for large enough $s$ we have $\phi^s(\bar{U}) \cap \bar{U} = \emptyset$.  It follows from the fragmentation property that $G^U$ and $G^{\phi^s(U)}$ together generate $G$, so $\Phi(G^U)$ and $\Phi(G^{\phi^s(U)})$ generate $\Phi(G)$.  In particular, any point in $I$ that is fixed by both $\Phi(G^U)$ and $\Phi(G^{\phi^s(U)})$ is fixed by every element of $\Phi(G)$.  

To show now that $\interior(U')$ is bounded away from $0$ and $1$, assume for contradiction that $U': = \fix(\Phi(G^U))$ contains a neighborhood of $0$ (the case where it contains a neighborhood of $1$ is identical).  Then $\psi^{\alpha(s)}(U')$ contains a neighborhood of $0$ as well, but $\psi^{\alpha(s)}(U') = \fix \left(\Phi(G^{\phi^s(U)}) \right)$.  By the previous paragraph, we conclude that $\Phi(G)$ pointwise fixes a neighborhood of $0$, contradicting the fact that $\psi$ had no fixed points in $(0,1)$. This concludes the proof of Proposition \ref{flows}.  

\end{proof}

Our next goal is to remove the assumption that our flows had no fixed points and work with $\R$ instead of $I$.  There is some technical work to do here to keep track of supports, but the key ideas are really contained in the proof of Proposition \ref{flows}.   

\begin{proposition} \label{rflow}
Let $r \geq 3, p \geq2$ and
let $\Phi: \Diff_c^r(\R) \to \Diff_c^p(\R)$ be a homomorphism.  
For any $z \in \R$, there is an $n \in \N$ such that any flow $\{\phi^t\} \subset \Diff_c^r(\R)$ without fixed points in $(-n, n)$ has $\Phi(\{\phi_t\})$ continuous in $t$ on some neighborhood containing $z$.  
\end{proposition}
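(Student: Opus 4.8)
The plan is to reduce to Proposition \ref{flows} by localizing near $z$. First I would fix $z \in \R$ and use the fragmentation property together with the structure of $\Diff_c^r(\R)$ to locate a bounded open interval $W$ containing $z$ and an $n \in \N$ (with $[-n,n] \supset \bar W$, say $n$ chosen much larger than needed) together with the following key finiteness input: the image $\Phi(\Diff_c^r(\R))$ cannot pointwise fix a neighborhood of $z$ unless it does something controlled. More precisely, the argument should proceed by picking $W$ and a ``buffer'' interval $(-n,n)$ so that if a flow $\{\phi^t\}$ has no fixed points in $(-n,n)$, then we may replace $\R$ by a copy of the half-open interval $I=[0,1)$ containing $(-n,n)$ as an open subinterval bounded away from the endpoints, via the embedding $\iota$ of Observation \ref{embed observation}. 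This turns $\{\phi^t\}$ into a flow on $I$ which is fixed-point free on a subinterval but not necessarily on all of $(0,1)$; however, since $\Phi$ is defined on all of $\Diff_c^r(\R)$, we may instead restrict attention to the subgroup $\Diff_c^r((-n,n)) \subset \Diff_c^r(\R)$, on which $\{\phi^t\}$ restricts to a flow with \emph{full} support, and apply Proposition \ref{flows} to the induced homomorphism $\Diff_c^r((-n,n)) \cong \Diff^r_+(I) \to \Diff^p_c(\R)$ after further localizing the target.

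The main steps would then be: (1) As in the proof of Proposition \ref{flows}, take an open set $U$ bounded away from the endpoints of the source interval with $z$ contained in the relevant region, and use that $\phi^t$ is continuous in $t$ to get $\phi^t(U)\cap U \neq\emptyset$ for small $t$; translate this via Corollary \ref{detect cor} into the statement that $\langle G^U, G^{\phi^t(U)}\rangle$ has nonabelian centralizer, where now $G = \Diff_c^r((-n,n))$. (2) Apply $\Phi$ and Proposition \ref{simpler prop} to conclude that $\fix\Phi(G^U)$ contains an open set $V$; the conjugation identities (\ref{conj}) and (\ref{benson}) still hold verbatim, giving $\fix\Phi(G^{\phi^t(U)}) = \Phi(\phi^t)(V')$ with $V' = \fix\Phi(G^U)$, so $V\cap\Phi(\phi^t)(V)$ contains an open set for all small $t$. (3) Use Szekeres (Theorem \ref{szek}) applied to $\Phi(\phi^1)$ restricted to an appropriate interval component of its support to produce a $C^1$ flow $\{\psi^t\}$ with $\Phi(\phi^t) = \psi^{\alpha(t)}$ there, reducing to continuity of $\alpha$. (4) Invoke the continuity criterion Lemma \ref{detect}, which requires knowing $V$ is bounded away from the endpoints of the interval on which $\{\psi^t\}$ is fixed-point free — this is where, as in Proposition \ref{flows}, the fragmentation property is used: since the $\phi^s$ eventually push $\bar U$ off itself, $G^U$ and $G^{\phi^s(U)}$ generate $G$, so a point fixed by both $\Phi(G^U)$ and $\Phi(G^{\phi^s(U)})$ is fixed by all of $\Phi(G)$, and one derives a contradiction with $\Phi(\phi^s)$ being fixed-point free on the relevant subinterval. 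Conclude that $\alpha$ is continuous, hence $\alpha(t)=\lambda t$, hence $\Phi(\{\phi^t\})$ is continuous in $t$ on a neighborhood of $z$ (namely, the image of the region where $\psi^t$ acts).

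The main obstacle, and the reason $r\ge 3$ (rather than $r \ge 2$) appears in the hypothesis, is bookkeeping of supports and regularity: $\Phi(\phi^1)$ need not be fixed-point free anywhere on the target, so one must first show that on \emph{some} component of $\R\setminus\fix\Phi(\phi^1)$ the dynamics are rich enough (e.g. that $\Phi(G^U)$ is genuinely nonabelian and $\Phi$ restricted there is still injective on the relevant subgroup, so that Corollary \ref{perfectcor} and the perfectness of $\Diff_c^r((-n,n))_0$, which needs $r \neq 2$, can be applied to rule out degenerate behavior). Showing that the ``good'' neighborhood of $z$ one produces at the end is independent of the particular flow $\{\phi^t\}$ — i.e. that a single $n$ works for all such flows — requires care: the point is that $n$ and $W$ depend only on $z$ and on the combinatorics of how $\Phi$ maps the standard family of subgroups $G^U$, not on $\{\phi^t\}$, so this should follow once the localization in step (1) is set up to be uniform. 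The rest is a faithful adaptation of the proof of Proposition \ref{flows}.
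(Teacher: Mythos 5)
Your overall plan is the right one and matches the paper's in spirit: normalize so that $\fix(\Phi(G))=\emptyset$, rerun the argument of Proposition \ref{flows} on a connected component $J$ of $\R\setminus\fix(\Phi(\phi^s))$ using Szekeres, Corollary \ref{detect cor} and the conjugation identities, and use fragmentation to keep $\fix(\Phi(G^U))$ away from the ends of $J$. You also correctly identify where the difficulty lies. But you do not close it, and that is the actual content of the proposition. The statement demands continuity on a neighborhood of the \emph{given} point $z$, so you must produce (a) a bounded set $U$ with $z\in\interior\left(\fix(\Phi(G^U))\right)$, and (b) for any flow whose support contains $U$, some $s$ with $z\notin\fix(\Phi(\phi^s))$. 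Your proposal defers both to phrases like ``the dynamics are rich enough'' and ``this should follow once the localization is set up to be uniform.'' The paper isolates exactly these two claims as Lemma \ref{littlelem} and proves them: for (a), if no such $U$ existed, then for every bounded open $U$ the group $G_U$ of diffeomorphisms supported in $U$ commutes with $G^U$, is perfect (this is where $r\geq 3$ enters, via $r\neq\dim+1$), hence by Corollary \ref{perfectcor} acts trivially on $\R\setminus\fix(\Phi(G^U))$ and in particular fixes $z$; since the $G_U$ generate $G$ by fragmentation, $z\in\fix(\Phi(G))=\emptyset$, a contradiction. For (b), choose $s$ with $\phi^s(\bar U)\cap\bar U=\emptyset$, so $\langle G^U,\phi^s\rangle=G$ and $z$ cannot be fixed by both $\Phi(G^U)$ and $\Phi(\phi^s)$. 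These two facts are precisely what make $n$ (any $n$ with $(-n,n)\supset U$) depend only on $z$, and what place $z$ inside an open subset of $\fix(\Phi(G^U))\cap J$, which is the hypothesis needed to run the Proposition \ref{flows} argument verbatim on $J$ and conclude continuity \emph{near $z$} rather than merely somewhere.

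A secondary problem: your proposed reduction to the subgroup $\Diff_c^r((-n,n))\cong\Diff^r_+(I)$ with ``$\{\phi^t\}$ restricting to a flow with full support'' does not make sense as stated, because a flow without fixed points in $(-n,n)$ does not preserve $(-n,n)$ and is not an element of (nor does it normalize) $\Diff_c^r((-n,n))$. The paper avoids this by half-closing a connected component of $\R\setminus\fix(\{\phi^t\})$, which \emph{is} flow-invariant, and keeping $G=\Diff_c^r(\R)$ throughout; the interval $I$ only enters through that component and through Szekeres applied to $\Phi(\phi^s)$ on a component of its own support. This is fixable, but as written your localization step would not compile into a proof.
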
 

\begin{remark} It will follow from Theorem \ref{main} that for any flow $\{\phi^t\}$, the image $\Phi(\{\phi_t\})$ will be continuous in $t$ everywhere, but we won't see this until we finish the whole proof of the theorem.
\end{remark}

\begin{proof}[Proof of Proposition \ref{rflow}]
Let $G$ denote $\Diff_c^r(\R)$.  We start by taking some steps to make the set-up in this situation as close as possible to the hypotheses of Proposition \ref{flows}.  

First, we may assume without loss of generality that $\fix(\Phi(G)) = \emptyset$.  This is because each connected component $C$ of $\R \setminus \fix(\Phi(G))$ is $\Phi(G)$ invariant, so we can consider the restriction of elements in the image of $\Phi$ to $C$, giving a homomorphism $\Diff_c^r(\R) \to \Diff_c^p(C) \cong \Diff_c^p(\R)$.  Since $G$ is perfect, this homomorphism is still injective.  

Let $\{\phi^t\} \subset G$ be a flow, and consider a connected component of $\R \setminus \fix(\{\phi^t\})$.  
Identify this interval with $(0,1)$ and consider its half-closure to be $[0,1) = I$.   As in the proof of Proposition \ref{flows}, let $U \subset I$ be an open interval bounded away from 0 and 1.  

Choose any $s \neq 0$ and let $\psi = \Phi(\phi^s)$.  Since $\psi$ is compactly supported, it has nonempty fixed set in $\R$.  Let $J$ be a connected component of $\R \setminus \fix(\psi)$.  Since $\phi^t$ and $\phi^s$ commute, $J$ is $\{\phi^t\}$-invariant and we can apply Szekeres' theorem to $\bar{J}$ and conclude as before that there is some flow $\{\psi^t\}$ on $J$ and an additive group homomorphism $\alpha_J: \R \to \R$ such that $\Phi(\phi_t)|_J = \psi^{\alpha_J(t)}$.  
If $\fix(\Phi(G^U)) \cap J$ contains an open set, then the argument from Proposition \ref{flows} applies verbatim to show that $\alpha_J$ is continuous, and so $\Phi(\{\phi^t\})$ is continuous on $J$.

Thus, the next lemma will finish the proof of Proposition \ref{rflow}.
\begin{lemma}\label{littlelem} 
Let $G = \Diff^r_c(\R)$ and suppose $\Phi: G \to \Diff^p_c(\R)$ satisfies $\fix(\Phi(G)) = \emptyset$. Then
\begin{enumerate}[(a)]
\item for any $z \in \R$, there is a bounded set $U \subset \R$ such that $z$ is in the interior of $\fix(\Phi(G^U))$, and
\item any flow $\{\phi^t\} \subset G$ with sufficiently large support has $z \notin \fix(\Phi(\phi^s))$ for some $s$.  
\end{enumerate}  
\end{lemma}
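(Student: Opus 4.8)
The plan is to prove (a) and (b) separately; each one combines an exhaustion argument with the commuting-subgroup results of Section \ref{comm} and the fragmentation property. Throughout, for an (open or closed) interval $W\subseteq\R$ write $\Diff^r_c(W)$ for the subgroup of $G$ of diffeomorphisms supported in $W$, and recall that $\Phi$ is automatically injective, since $\Diff^r_c(\R)$ is simple and $\Phi$ is nontrivial (as $\fix(\Phi(G))=\emptyset$).

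For part (a): note first that $\Diff^r_c(\R)$ is the increasing union of the subgroups $\Diff^r_c((-m,m))$, since every element has compact support; hence, as $\fix(\Phi(G))=\emptyset$, there is some $m$ with $z\notin\fix\big(\Phi(\Diff^r_c((-m,m)))\big)$, i.e. $z$ lies in the \emph{open} set $\R\setminus\fix\big(\Phi(\Diff^r_c((-m,m)))\big)$. The point is then that the ``tail group'' $T_m:=\Diff^r_c((-\infty,-m))\cdot\Diff^r_c((m,\infty))$ commutes with $\Diff^r_c((-m,m))$, is nonabelian, and (being a product of two perfect groups, using $r\geq 3$) is perfect, so $\Phi(T_m)$ is nonabelian and perfect as well. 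Applying Corollary \ref{perfectcor} inside $\Diff^p_c(\R)$ forces $\Phi(T_m)$ to act trivially on $\R\setminus\fix\big(\Phi(\Diff^r_c((-m,m)))\big)$, hence to fix a neighborhood of $z$ pointwise. Since any diffeomorphism fixing $[-m-1,m+1]$ pointwise is supported off $[-m,m]$, we have $G^{[-m-1,m+1]}\subseteq T_m$, so $\Phi(G^{[-m-1,m+1]})$ fixes the same neighborhood of $z$; thus $U:=[-m-1,m+1]$ works.

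For part (b): keep $U=[-k,k]$ as produced in (a), and read ``sufficiently large support'' as ``$\{\phi^t\}$ has no fixed point in $(-k-1,k+1)$''. Suppose for contradiction that $\Phi(\phi^t)(z)=z$ for all $t$. By (a), $z$ is also fixed by $\Phi(G^U)$, hence by $\Phi(H)$ where $H:=\langle\,\{\phi^t\},\,G^U\,\rangle$; so it suffices to prove $H=G$, which contradicts $\fix(\Phi(G))=\emptyset$. Because $\{\phi^t\}$ is fixed-point-free on the neighborhood $(-k-1,k+1)$ of $[-k,k]$, the flow moves $[-k,k]$ monotonically, and (replacing $t$ by $-t$ if needed) for $t_0$ large $\phi^{t_0}([-k,k])$ lies entirely in $(k,\infty)$ while $\phi^{-t_0}([-k,k])$ lies entirely in $(-\infty,-k)$; here one uses that a finite monotone limit of the orbit would be a flow fixed point inside $(-k-1,k+1)$. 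Conjugating (recall $g\,G^W g^{-1}=G^{g(W)}$), the subgroups $\phi^{t_0}G^U\phi^{-t_0}$ and $\phi^{-t_0}G^U\phi^{t_0}$ are the pointwise stabilizers of $\phi^{\pm t_0}([-k,k])$, and hence contain $\Diff^r_c((-\infty,\alpha))$ and $\Diff^r_c((\delta,\infty))$ respectively, for some $\delta<-k<k<\alpha$. Since the rays $(-\infty,\alpha)$ and $(\delta,\infty)$ cover $\R$, the fragmentation property (Theorem \ref{frag}) shows these two subgroups already generate $G$; as $G^U$, $\phi^{t_0}G^U\phi^{-t_0}$ and $\phi^{-t_0}G^U\phi^{t_0}$ all lie in $H$, we conclude $H=G$.

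I expect the generation statement inside part (b) to be the main obstacle: one must exploit fixed-point-freeness of the flow on a whole neighborhood of $\bar U$ in order to push the single ``blocking'' interval $U$ off to either end, converting it into two far-apart blockers whose complementary rays cover $\R$, and then invoke fragmentation. The remaining ingredients — the exhaustion arguments and the perfectness bookkeeping needed to apply Corollary \ref{perfectcor} — are routine.
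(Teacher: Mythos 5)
Your proof is correct and follows essentially the same route as the paper: part (a) rests on Corollary \ref{perfectcor} applied to a commuting inside/outside pair of subgroups, and part (b) on the fragmentation property applied to translates of $U$ under the flow. The only differences are cosmetic --- you argue (a) directly via the exhaustion by the subgroups $\Diff^r_c((-m,m))$ where the paper argues by contradiction over all bounded intervals, and in (b) you push $U$ off to both ends and cover $\R$ by two rays where the paper uses a single translate $\phi^s(\bar{U})$ disjoint from $\bar{U}$.
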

Given this lemma, we know that any flow with sufficiently large support will have a connected component $J$ of $\R \setminus \fix(\Phi(\phi^s))$ with $z$ contained inside an open set in $\fix(\Phi(G^U)) \cap J$, so the argument we just gave shows that $\Phi(\{\phi^t\})$ is continuous on $J$.

\bigskip
\noindent \textit{Proof of Lemma \ref{littlelem}}.
Let $z \in \R$.  Suppose for contradiction that there is no bounded, open interval $U$ such that $z \in \interior \left(\fix(\Phi(G^U))\right)$.  Let $G_U := \{g \in G: g(x) = x \text{ for all } x \notin U\}$.  By the fragmentation property (Theorem \ref{frag}), the subgroups $G_U$ generate $G$ as $U$ ranges over open intervals of $\R$.  

Now for any open set $U$, the subgroup $G_U$ commutes with $G^U$.  Also, $G_U$ is perfect and $G^U$ nonabelian, so by Corollary \ref{perfectcor}, $\R \setminus \fix(\Phi(G^U)) \subset \fix(\Phi(G_U))$.  It follows that $z \in \fix(\Phi(G_U))$ for every $U$.  Since the $\Phi(G_U)$ generate $\Phi(G)$, we have $z \in \fix(\Phi(G))$, a contradiction.   This proves (a).  

To prove (b), take a bounded set $U$ such that $z$ is in the interior of $\fix(\Phi(G^U))$.  
We claim that any flow $\{\phi^t\} \subset G$ with support containing $U$ satisfies $z \notin \fix(\Phi(\phi^s))$ for some $s$.  
To show this, suppose $\{\phi^t\}$ is such a flow.  We know that there is some $s$ such that $\phi^s(\bar{U}) \cap \bar{U} = \emptyset$, so $\phi^s$ and $G^U$ generate $G$ (using fragmentation again).  In particular, if $z \in \fix(\Phi(G^U)) \cap \fix(\Phi(\phi^s))$, then $z \in \fix(\Phi(G))$, which we assumed to be empty.  So $z \notin \fix(\Phi(\phi^s))$ and this is what we wanted to show.  

This completes the proof of the lemma and the proof of Proposition \ref{rflow}.  

\end{proof}

 \begin{figure*}
   \labellist 
  \small\hair 2pt
   \pinlabel $U$ at 100 060
   \pinlabel $\phi^t(U)$ at 143 060
   \pinlabel $z$ at 117 028
   \pinlabel $\Phi(\phi^s)(z)$ at 177 028
   \endlabellist
  \centerline{
    \mbox{\includegraphics[width=4in]{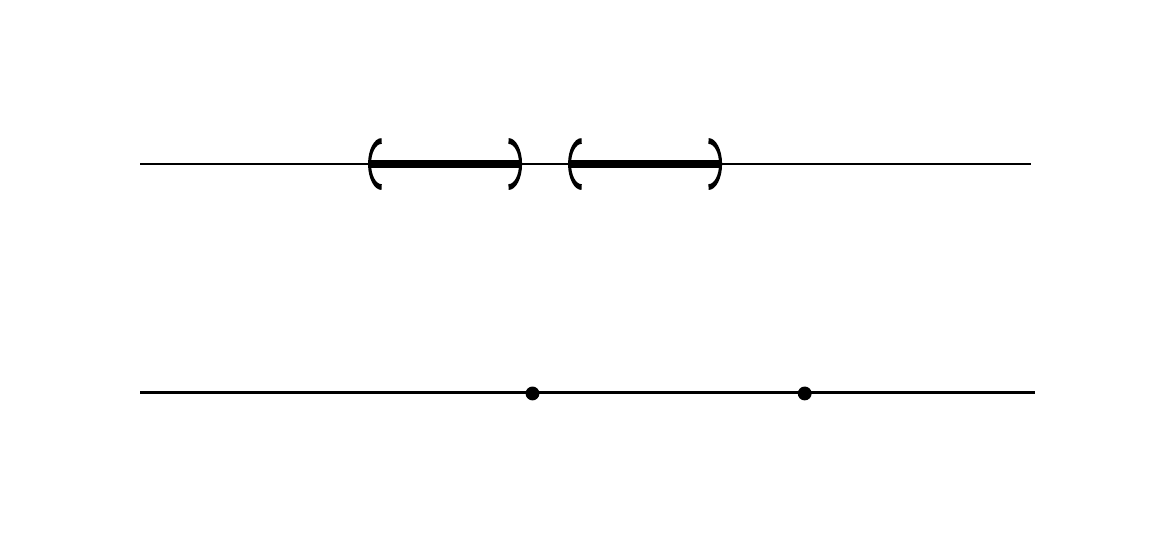}}}
 \caption{If $\phi^s(\bar{U}) \cap \bar{U} = \emptyset$, then $z$ is not fixed by $\Phi(\phi^s)$}.
  \label{contfig}
  \end{figure*}

To extend Proposition \ref{rflow} to homomorphisms $\Phi: \Diff^r_c(\R) \to \Diff^p_+(S^1)$, we need a lemma.  This is the diffeomorphism group analog to the fact that any injection $\R \hookrightarrow S^1$ must miss a point.  

\begin{lemma} \label{notempty}
Let  $r, p \geq 2$ and let $\Phi: \Diff^r_c(\R) \to \Diff^p_+(S^1)$ be a homomorphism.  Then $\fix \left(\Phi(\Diff^r_c(\R)) \right) \neq \emptyset$.
\end{lemma}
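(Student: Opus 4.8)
The plan is to argue by contradiction: suppose $\Phi:\Diff^r_c(\R)\to\Diff^p_+(S^1)$ is a homomorphism with $\fix(\Phi(\Diff^r_c(\R)))=\emptyset$, and derive a contradiction from the structure of $\Diff^r_c(\R)$, which has plenty of room for flows and commuting subgroups with large fixed sets. The key tension is that $\Diff^r_c(\R)$ contains, for any bounded open interval $U$, the nonabelian subgroup $G^U$ of diffeomorphisms fixing $U$ pointwise, together with the commuting nonabelian subgroup $G_U$ of diffeomorphisms supported in $U$; and $\Diff^r_c(\R)$ is generated (fragmentation, Theorem \ref{frag}) by the $G_U$ as $U$ ranges over bounded intervals. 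On the circle side, rotation number obstructs every element of $\Phi(\Diff^r_c(\R))$ from simultaneously having a common fixed point only if some element has nonzero rotation number — but a perfect group has no nontrivial homomorphism to $\R/\Z$, so in fact \emph{every} element of $\Phi(\Diff^r_c(\R))$ has rotation number zero. That is the crucial first observation: since $\Diff^r_c(\R)$ is perfect (as $r\neq 2$ here is not needed — $r\ge 2$ suffices for $r\neq\dim+1=2$; actually we need $r\ge 3$ or $r=1$, but the statement assumes $r\ge 2$; in any case for our application $r\ge 3$), the composition with the rotation number homomorphism on the subgroup generated by a single element's iterates forces each $\Phi(g)$ to have a fixed point.

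First I would record that every $\Phi(g)$ has rotation number zero, hence $\fix(\Phi(g))\neq\emptyset$ for each individual $g$, but possibly with no common fixed point. Next I would use Corollary \ref{hks1} and the commuting-subgroup machinery: pick a bounded open interval $U\subset\R$ and consider $G^U$ and $G_U$, which commute and are both nonabelian. Apply the $S^1$ analysis: since $\Phi(G^U)$ and $\Phi(G_U)$ commute, and both are nonabelian (injectivity plus perfectness — the image of a nonabelian perfect group is nonabelian), Proposition \ref{keypropS1} applies with $G=\Phi(G^U)$ and $H=\Phi(G_U)$. But $\Phi(G_U)$ is perfect, so case (a) of Proposition \ref{keypropS1} ("$H$ abelian after powers") would, via the perfectness, force $\Phi(G_U)$ to be abelian after passing to powers — and since in a perfect group every element is a product of commutators, combined with "abelian after powers" one can derive that $\Phi(G_U)$ is trivial on the relevant set; more directly, since every element of $\Phi(G^U)$ has rotation number zero, case (a) is the relevant alternative and it gives that $\Phi(G_U)$ acts trivially off $\fix(\Phi(G^U))$. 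Either way we conclude $\R\setminus\fix(\Phi(G^U))\subset\fix(\Phi(G_U))$, exactly the conclusion of Corollary \ref{perfectcor} transported to the circle.

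From there the argument mirrors Lemma \ref{littlelem}(a): fragmentation says the $\Phi(G_U)$ generate $\Phi(\Diff^r_c(\R))$, and for every $U$ we have just shown $S^1\setminus\fix(\Phi(G^U))\subset\fix(\Phi(G_U))$. If $\fix(\Phi(G^U))$ has empty interior for every bounded $U$, then each $\Phi(G_U)$ fixes a dense set, hence (being diffeomorphisms) fixes everything, so $\Phi(\Diff^r_c(\R))$ is trivial, contradicting injectivity. So for some $U$, $\fix(\Phi(G^U))$ contains an open arc $V\subsetneq S^1$. Now take a flow $\{\phi^t\}\subset\Diff^r_c(\R)$ whose support contains $\bar U$ and a large translate so that $\phi^s(\bar U)\cap\bar U=\emptyset$ for some $s$; then $G^U$ and $\phi^s$ generate $\Diff^r_c(\R)$ by fragmentation, so $\fix(\Phi(G^U))\cap\fix(\Phi(\phi^s))\subset\fix(\Phi(\Diff^r_c(\R)))=\emptyset$, forcing $V\cap\fix(\Phi(\phi^s))=\emptyset$. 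But $\Phi(\phi^s)$ has rotation number zero, so $\fix(\Phi(\phi^s))\neq\emptyset$; and using that $V$ and $\phi^s(V)$-type considerations cover $S^1$ (taking finitely many translates $U, \phi^{s_1}(U),\dots$ whose complements' fixed sets must then intersect to the global — empty — fixed set while each $\Phi(\phi^{s_i})$ has a fixed point), one runs out of room on the circle. The cleanest finish: the arcs $\{S^1\setminus\fix(\Phi(\phi^{s_i}(U)))\}$ would have to cover $S^1$ since their intersection of fixed sets is empty, but each complement of one such arc (the fixed set) is nonempty closed; choosing the translates so the $\Phi$-images of the corresponding $G^{\phi^{s_i}(U)}$ generate $\Phi(G)$ gives $\bigcap_i \fix(\Phi(G^{\phi^{s_i}(U)}))=\emptyset$, yet each contains the nonempty set $\psi^{\alpha(s_i)}(V)$, and with only finitely many needed (by compactness of $S^1$ and fragmentation requiring only finitely many generators of this type) we get finitely many nonempty open arcs in $S^1$ whose intersection is empty — which is fine in general, so the real contradiction must come from pairing with rotation-number-zero elements directly.

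The main obstacle I anticipate is precisely this last step: unlike the $\R$ case, where "fixes a neighborhood of $0$" plus "fixed-point-free on $(0,1)$" is an immediate contradiction (Proposition \ref{rflow}'s endgame), on $S^1$ one must combine the local fixed-arc information with the global rotation-number-zero constraint to force a genuine contradiction. I expect the right move is: having found $V=\interior(\fix(\Phi(G^U)))\neq\emptyset$, apply fragmentation to cover $S^1$ by finitely many translates of $U$ under a fixed flow so that the corresponding $\Phi(G^{\phi^{s_i}(U)})$ generate $\Phi(G)$, whence $\bigcap_i \psi^{\alpha(s_i)}(V)\subseteq\fix(\Phi(G))=\emptyset$; but then, tracking that each $\psi^{\alpha(s_i)}$ is a fixed-point-free-off-$\fix$ model of the flow on the relevant component and that the flow element $\Phi(\phi^{s_i})$ itself has rotation number zero hence a fixed point outside... the bookkeeping needs care to avoid the vacuous-intersection trap. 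A safer route, which I would adopt, is to instead directly push the $\R$-argument through: show $\fix(\Phi(G^U))$ must be bounded away from being all of $S^1$ but also that its complement is $\Phi(G_U)$-fixed, and then observe that a single flow $\{\phi^t\}$ in $\Diff^r_c(\R)$ with support sweeping across $\R$ has $\Phi(\phi^t)$ moving points of $V$ around $S^1$, yet rotation number zero means $\Phi(\phi^t)$ has a fixed point for every $t$ — and Szekeres applied to a component of $S^1\setminus\fix(\Phi(\phi^s))$ containing a subarc of $V$ lets us run the continuity/contradiction exactly as in Proposition \ref{rflow}, the point being that sweeping $U$ across all of $\R$ would drag $V$ to cover $S^1$, making $\fix(\Phi(\phi^s))$ empty and contradicting rotation number zero.
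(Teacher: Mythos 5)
Your proposal has two genuine gaps, and the second one you essentially concede yourself.

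First, your ``crucial first observation'' --- that perfectness of $\Diff^r_c(\R)$ forces every element of $\Phi(\Diff^r_c(\R))$ to have rotation number zero --- is not justified by the argument you give. Rotation number is \emph{not} a group homomorphism $\Diff^p_+(S^1)\to\R/\Z$ (Remark \ref{rot0} in the paper points out that a product of two diffeomorphisms with fixed points need not have a fixed point), so ``a perfect group admits no nontrivial homomorphism to $\R/\Z$'' does not apply. Indeed $\Homeo_+(S^1)$ and $\Diff^\infty_+(S^1)$ are themselves perfect and contain rotations of every rotation number, so the general principle you invoke is false. The conclusion you want is true here, but the route to it is the one the paper takes: apply Proposition \ref{keypropS1} to a nonabelian subgroup of the image together with its nonabelian commuting partner, land in case (b), observe via Remark \ref{rot0} that the rotation-number-zero elements then form a nontrivial, normal subgroup because they coincide with the pointwise stabilizer of the interval $J$, and invoke simplicity of the source. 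Your later steps (ruling out case (a) by perfectness, transporting Corollary \ref{perfectcor} to the circle) are undercut by the same issue: the ``after powers'' caveat is genuinely present on $S^1$, and it is exactly what the Remark \ref{rot0} plus simplicity step is designed to handle.

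Second, the endgame does not close, as you notice yourself: on $S^1$, producing finitely many nonempty closed fixed sets with empty intersection is, in your words, ``fine in general'' and yields no contradiction, and neither of your two proposed finishes actually produces one. The paper avoids this trap by not arguing by contradiction at all. It writes $G=\bigcup_n G_n$ where $G_n$ is the subgroup supported in $(-n,n)$, pairs each $G_n$ with the commuting nonabelian subgroup $H_n$ of diffeomorphisms fixing $[-n,n]$ pointwise, deduces from Proposition \ref{keypropS1} (with the Remark \ref{rot0}/simplicity step above) that each $\fix(\Phi(G_n))$ is a nonempty closed subset of $S^1$, and then uses that these sets are nested decreasing, so by compactness of $S^1$ their intersection, which is exactly $\fix(\Phi(G))$, is nonempty. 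This exhaustion-plus-compactness mechanism is the missing idea; without it or some substitute, the contradiction you are chasing in your final paragraph will keep slipping away.
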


\begin{proof}
Let $G_n$ be the group of diffeomorphisms with support contained in $(-n, n)$.  Then $G_n \subset G_{n+1}$ and
$$G = \bigcup_{n \in \N} G_n.$$  
 It follows that $\Phi(G_n) \subset \Phi(G_{n+1})$, so 
\begin{equation}  \label{nested}
\fix(\Phi(G_n)) \supset \fix(\Phi(G_{n+1}))
\end{equation} 
  Moreover, since $$\Phi(G) = \bigcup_{n \in \N} \Phi(G_n)$$ we have also 
 \begin{equation}  \label{fills}
\fix(\Phi(G)) = \bigcap_{n \in \N} \fix(\Phi(G_n))
\end{equation}  

Now each $G_n$ commutes with the subgroup $H_n$ of diffeomorphisms fixing $[-n, n]$ pointwise and the subgroups $\Phi(G_n)$ and $\Phi(H_n)$ satisfy the hypotheses of Proposition \ref{keypropS1}.  In particular, if we let $G'_n$ be the group of elements of $\Phi(G_n)$ with rotation number zero, this is a nontrivial \emph{subgroup} (by remark \ref{rot0}) so by simplicity of $G_n$, we have $G'_n = G_n$. It now follows from Proposition \ref{keypropS1} that $\fix(\Phi(G_n)) \neq \emptyset$.  Since fixed sets are closed, it follows from (\ref{nested}) and (\ref{fills}) that $\fix(\Phi(G)) \neq \emptyset$.  
\end{proof}

The following corollary of Proposition \ref{rflow} is now immediate. 

\begin{corollary} \label{sflow} 
Let $r \geq 3$, $p \geq 2$, and suppose $\Phi: \Diff^r_c(\R) \to \Diff^p_+(S^1)$ is a homomorphism.  For any $z \in S^1 \setminus \fix(\Phi(G))$, there is an $n \in \N$ such that if $\{\phi^t\} \subset \Diff_c^r(\R)$ is a flow without fixed points in $(-n, n)$, then $\Phi(\{\phi_t\})$ is continuous in $t$ on a neighborhood containing $z$.    
\end{corollary}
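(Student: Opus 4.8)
The plan is to reduce Corollary~\ref{sflow} directly to Proposition~\ref{rflow} by cutting $S^1$ open at a point of $\fix(\Phi(G))$, where $G:=\Diff^r_c(\R)$. First I would dispose of the degenerate case: since $r\geq 3$, the group $G$ is simple, so $\Phi$ is either trivial or injective; if $\Phi$ is trivial then $\fix(\Phi(G))=S^1$ and there is no $z$ to consider, so assume $\Phi$ is injective. By Lemma~\ref{notempty}, $\fix(\Phi(G))\neq\emptyset$, so $S^1\setminus\fix(\Phi(G))$ is a nonempty proper open subset of $S^1$. Given $z\in S^1\setminus\fix(\Phi(G))$, let $C$ be the connected component of $S^1\setminus\fix(\Phi(G))$ containing $z$. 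Then $C$ is an open arc, hence homeomorphic to $\R$, and, being a component of the complement of a closed $\Phi(G)$-invariant set, it is $\Phi(G)$-invariant. Restricting the elements of $\Phi(G)$ to $C$ therefore yields a homomorphism $\bar\Phi\colon G\to\Diff^p(C)$, with $z$ now a point of $C\cong\R$.

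The one thing that must be checked before invoking Proposition~\ref{rflow} is that $\bar\Phi$ actually takes values in $\Diff^p_c(C)$, i.e.\ that for each $g\in G$ the set $\supp(\Phi(g))\cap C$ is compactly contained in $C$, equivalently that $\Phi(g)$ is the identity on a neighborhood of each endpoint of $C$. This is exactly the reduction performed at the outset of the proof of Proposition~\ref{rflow} (there with $\R$ in place of $S^1$), and I would carry it out the same way. Writing $g\in G_n$, where $G_n\le G$ is the subgroup of diffeomorphisms supported in $(-n,n)$ and $H_n\le G$ the (nonabelian) subgroup fixing $[-n,n]$ pointwise, one has that $\Phi(G_n)$ and $\Phi(H_n)$ are commuting nonabelian subgroups of $\Diff^p_+(S^1)$. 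Feeding this pair into Proposition~\ref{keypropS1}, using that $\Diff^r_c(\R)$ — hence $G_n$ — is not ``abelian after powers'' (immediate from Kopell's Lemma once one notes that nontrivial elements of $\Diff^r_c(\R)$ have infinite order), so that alternative (a) fails, and then using Remark~\ref{rot0} together with H\"older's theorem to upgrade ``$\Phi(G_n)$ fixes an interval after powers'' to ``$\Phi(G_n)$ fixes an interval pointwise,'' one locates subintervals fixed pointwise by $\Phi(G_n)$; combining this with the fragmentation property (Theorem~\ref{frag}) and the fact that $\partial C\subset\fix(\Phi(G))$ forces $\Phi(g)$, as an element of $\Phi(G_n)$, to be trivial near $\partial C$. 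I expect this compact-support verification to be the only real content of the argument; everything else is formal.

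With $\bar\Phi\colon G\to\Diff^p_c(C)\cong\Diff^p_c(\R)$ in hand, apply Proposition~\ref{rflow} to $\bar\Phi$ and the point $z\in C$: there is an $n\in\N$ such that every flow $\{\phi^t\}\subset G$ with no fixed points in $(-n,n)$ has $\bar\Phi(\{\phi^t\})$ continuous in $t$ on a neighborhood of $z$ in $C$. Since $C$ is open in $S^1$ and continuity of a one-parameter family is a local condition, $\Phi(\{\phi^t\})$ is then continuous in $t$ on a neighborhood of $z$ in $S^1$, which is precisely the assertion of Corollary~\ref{sflow}.
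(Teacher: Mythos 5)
Your overall strategy---use Lemma \ref{notempty} to find fixed points of $\Phi(G)$ in $S^1$, cut the circle there, and invoke Proposition \ref{rflow}---is the same as the paper's, and you have correctly identified that the only real issue is controlling the behavior of the image near the cut. But the way you propose to handle that issue has a genuine gap. You restrict to the component $C$ of $S^1\setminus\fix(\Phi(G))$ containing $z$ and claim that each $\Phi(g)|_C$ is compactly supported in $C$, i.e.\ that $\Phi(g)$ is the identity on a punctured neighborhood of each point of $\partial C$ inside $C$. The argument you sketch for this does not deliver it: Proposition \ref{keypropS1} applied to the commuting pair $\Phi(G_n)$, $\Phi(H_n)$, together with Remark \ref{rot0}, only produces \emph{some} interval $J\subset S^1$ fixed pointwise by $\Phi(G_n)$, with no control whatsoever on where $J$ sits relative to $\partial C$. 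The subsequent appeal to fragmentation and to $\partial C\subset\fix(\Phi(G))$ does not bridge this: knowing that the endpoints of $C$ are fixed by every element of $\Phi(G)$ says nothing about whether the support of $\Phi(G_n)$ accumulates on $\partial C$ from inside $C$. (The compact-support claim is true \emph{a posteriori}, as a consequence of Theorem \ref{main}, but that is exactly what cannot be assumed here.) Without it, your restricted homomorphism $\bar\Phi$ is only known to land in diffeomorphisms of $\overline{C}$ fixing the endpoints, not in $\Diff^p_c(C)$, so Proposition \ref{rflow} as stated does not apply.

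The paper sidesteps this entirely with a cheaper device: pick a single point $x\in\fix(\Phi(G))$ (Lemma \ref{notempty}) and observe that for every $g$, all derivatives of $\Phi(g)$ at $x$ agree with those of the identity, because $g\mapsto \log D\Phi(g)(x)$ (and its higher-order analogues) is a homomorphism to an abelian group and $G=\Diff^r_c(\R)$ is perfect. Cutting $S^1$ at $x$ then realizes $\Phi$ as a homomorphism into a group of diffeomorphisms of $S^1\setminus\{x\}\cong\R$ that are infinitely tangent to the identity at the ends, which is enough for the machinery of Proposition \ref{rflow} to go through. If you want to keep your formulation, you need to either prove the compact-support claim honestly or replace it with this derivative argument.
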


\begin{proof} 
By Lemma \ref{notempty}, there is some point $x \in \fix \left( \Phi(\Diff^r_c(\R))\right)$.  
Moreover, the derivative of every element $\Phi(g) \in \Phi(\Diff^r_c(\R))$ at $x$ is zero, for $g \mapsto D\Phi(g)(x)$ is a homomorphism to $\R$, and since $\Diff^r_c(\R)$ is perfect, it must be trivial.  The same applies to higher order derivatives.  
Cutting $S^1$ at $x$, we can consider $\Phi$ to be a homomorphism into the subgroup of diffeomorphisms in $\Diff^p_+(S^1 \setminus\{x\})$ where all derivatives vanish at the endpoints.  This is isomorphic to a subgroup of $\Diff^p_c(\R)$ and  Proposition \ref{rflow} now applies.  

\end{proof}

A similar conclusion holds for homomorphisms $\Phi: \Diff^r_+(S^1) \to \Diff^r_+(S^1)$.  Since we will not use this in the sequel, we state it informally and only sketch the proof, leaving details to the reader.
\begin{corollary} \label{ssflow} 
Let $ r \geq 3$, and suppose $\Phi: \Diff^r_+(S^1) \to \Diff^r_+(S^1)$ is a homomorphism.  Then for any $z \in S^1$, any flow $\{\phi^t\} \subset \Diff_+^r(S^1)$ with sufficiently large support has $\Phi(\{\phi^t\})$ continuous at $z$
\end{corollary}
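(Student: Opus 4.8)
The plan is to adapt the strategy of Proposition \ref{rflow} and Corollary \ref{sflow} to the case where the source is $\Diff^r_+(S^1)$ rather than $\Diff^r_c(\R)$. The essential point is that on the circle, unlike on $\R$, there is no canonical ``fixed point at infinity'' to cut at; instead we must use Proposition \ref{keypropS1} to locate, for each target point $z$, a workable sub-picture.

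\textbf{Step 1: Localize using commuting subgroups.} Fix $z \in S^1$. For an open interval $U \subset S^1$, write $G^U$ for the subgroup of $G := \Diff^r_+(S^1)$ fixing $U$ pointwise and $G_U$ for those supported in $U$. These commute, $G_U$ is perfect, and $G^U$ is nonabelian when $U$ is a proper subinterval. By the fragmentation property (Theorem \ref{frag}) the groups $G_U$ generate $G$ as $U$ ranges over proper open subintervals. Exactly as in part (a) of Lemma \ref{littlelem}, one argues: if no proper interval $U$ had $z \in \interior(\fix(\Phi(G^U)))$, then applying Corollary \ref{perfectcor} to the commuting pair $(G^U, G_U)$ — note $G^U$ and $G_U$ satisfy the hypotheses (both nonabelian, $G_U$ perfect), so $S^1 \setminus \fix(\Phi(G^U)) \subset \fix(\Phi(G_U))$ — gives $z \in \fix(\Phi(G_U))$ for every $U$, hence $z \in \fix(\Phi(G))$. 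So we may assume $z \in \fix(\Phi(G))$ is either already the case, or we obtain a proper interval $U$ with $z$ in the interior of $\fix(\Phi(G^U))$. (If $z \in \fix(\Phi(G))$ the statement is vacuous in spirit; one restricts attention to $z$ outside $\fix(\Phi(G))$, or notes that near such $z$ all flows act trivially.)

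\textbf{Step 2: Produce a Szekeres flow on a target component.} Take a flow $\{\phi^t\} \subset G = \Diff^r_+(S^1)$ with support large enough that $\supp(\phi^t) \supset \bar U$ and additionally large enough that for some $s$ one has $\phi^s(\bar U) \cap \bar U = \emptyset$ — here ``sufficiently large support'' on $S^1$ means the support misses only a small arc. Pick such an $s$ and set $\psi := \Phi(\phi^s)$. One must check $z \notin \fix(\psi)$: since $\phi^s(\bar U)\cap\bar U = \emptyset$, fragmentation shows $\phi^s$ together with $G^U$ generate $G$, so if $z$ were fixed by both $\Phi(\phi^s)$ and $\Phi(G^U)$ it would be fixed by $\Phi(G)$, contrary to the choice of $z$. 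Now let $J$ be the component of $S^1 \setminus \fix(\psi)$ containing $z$. Since $\{\phi^t\}$ commutes with $\phi^s$, $J$ is $\{\phi^t\}$-invariant, and Szekeres' theorem (Theorem \ref{szek}) applied to $\bar J$ produces a $C^1$ flow $\{\psi^t\}$ on $J$ with $\Phi(\phi^t)|_J = \psi^{\alpha_J(t)}$ for an additive homomorphism $\alpha_J : \R \to \R$.

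\textbf{Step 3: Continuity of $\alpha_J$ via the intersection criterion.} The set $\fix(\Phi(G^U)) \cap J$ contains a neighborhood of $z$, in particular a nonempty open set bounded away from the endpoints of $J$ (shrinking if necessary). From this point the argument of Proposition \ref{flows} — using the continuity criterion (Lemma \ref{detect}), the conjugation identity $G^{\phi^t(U)} = \phi^t G^U \phi^{-t}$, $g\,\fix(H) = \fix(gHg^{-1})$, and Corollary \ref{detect cor}/Proposition \ref{simpler prop} to detect the intersection $V \cap \psi^{\alpha_J(t)}(V)$ algebraically through nonabelian centralizers — applies verbatim and forces $\alpha_J$ to be continuous, hence linear. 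Therefore $\Phi(\{\phi^t\})$ is continuous in $t$ on $J$, and in particular at $z$.

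\textbf{Main obstacle.} The delicate point is Step 1–2: on $S^1$ one cannot simply invoke the analogue of Lemma \ref{notempty} to cut at a global fixed point, so one must arrange by hand, for each $z$, a proper interval $U$ and a flow whose image's non-fixed component $J$ straddles $z$ while also having $z$ in $\interior(\fix(\Phi(G^U))) \cap J$. Juggling ``sufficiently large support'' on $S^1$ (support missing an arbitrarily small arc, so that $\phi^s(\bar U)\cap \bar U = \emptyset$ is still attainable) against the need for $\fix(\Phi(G^U))$ to be nonempty near $z$ is where the circle case genuinely differs from Proposition \ref{rflow}; once the picture is set up, Step 3 is a routine transcription of the interval argument. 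This is why we only sketch it and leave the bookkeeping to the reader.
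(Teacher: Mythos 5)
Your route differs from the paper's: rather than re-running the Proposition \ref{rflow} machinery directly on $G=\Diff^r_+(S^1)$, the paper simply observes that a flow with a fixed point $x$ lives in the copy of $\Diff^r_c(\R)$ consisting of diffeomorphisms supported in $S^1\setminus\{x\}$, restricts $\Phi$ to that subgroup, and invokes Corollary \ref{sflow} wholesale. That reduction is what supplies, via Lemma \ref{notempty}, the fact that $\fix(\Phi(\phi^s))\neq\emptyset$. Your Step 2 silently assumes this: you verify $z\notin\fix(\Phi(\phi^s))$ but never that $\fix(\Phi(\phi^s))$ is nonempty, and without that there is no interval $J$ on which to apply Szekeres' theorem. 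For a circle target this is not automatic and requires either Lemma \ref{notempty} or the rotation-number argument of Proposition \ref{keypropS1}. A related soft spot is Step 1: you cite Corollary \ref{perfectcor} for the commuting pair $\Phi(G^U),\Phi(G_U)$, but these are subgroups of $\Diff^r_+(S^1)$, and that corollary is proved only for $\Diff^r_+(I)$ and $\Diff^r_c(\R)$; the circle analogue only gives conclusions ``after powers'' (Proposition \ref{keypropS1}) and one must add the simplicity/rotation-number-zero argument of Lemma \ref{notempty} to upgrade it.

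The more substantive gap is that your proof omits an entire case. You interpret ``sufficiently large support'' as ``support misses only a small arc,'' but flows conjugate to the rotation flow have full support and are certainly covered by the statement; for such a flow there is no fixed point to exploit and $\Phi(\phi^s)$ may well be fixed-point-free for every $s$, so Steps 2--3 never get off the ground. This is precisely the case the paper's sketch spends most of its effort on: one first shows (using torsion elements and H\"older--Kopell theory) that the image of a rotation flow is conjugate to the rotation subgroup, and then, for small $t$, writes rotation by $t$ as a product $f^tg^t$ of two flows each supported in a proper arc with large support, applies Corollary \ref{sflow} to each factor, and extends from small $t$ to all $t$ using the group structure. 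Your write-up identifies the circle-versus-line bookkeeping as the main obstacle but then addresses only the easier half of it; the fixed-point-free flows need a separate argument along the lines above.
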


\begin{proof}[Proof sketch]  There are two classes of $\R$ subgroups of $\Diff_+(S^1)$, those with nonempty fix set and those conjugate to the rotation subgroup.  If $\fix(\{\phi^t\}) \neq \emptyset$, we can cut $S^1$ at a point $x \in \fix(\{\phi^t\})$ and apply Corollary \ref{sflow} with the group of diffeomorphisms of $S^1 \setminus \{x\} \cong \R$ as the domain.  Otherwise, $\{\phi^t\}$ is conjugate to the rotation subgroup, and we can use torsion elements and H\"older-Kopell theory to show that  $\Phi(\{\phi^t\})$ is conjugate to the rotation subgroup as well.  Continuity in $t$ can be demonstrated for small $t$ since for small $t$, we can choose $x$ and $y \in S^1$ and write rotation by $t$ as $f^t g^t$ where $f^t$ is a flow supported on $S^1 \setminus \{x\}$ and $g^t$ a flow supported on $S^1 \setminus \{y\}$.  We may also choose $f^t$ and $g^t$ to have arbitrarily large supports.  Corollary \ref{sflow} then implies that $\Phi(\{f^t\})$ and $\Phi(\{g^t\})$ are continuous at every point, hence so is $\Phi(\{\phi^t\})$ -- at least for small $t$.  But since $\Phi(\{\phi^t\})$ is an $\R$-subgroup, it must be continuous for all $t$.  

\end{proof}

\section{Proof of Theorem \ref{main}} \label{mainpf}

We start by proving Theorem \ref{main} for diffeomorphisms of $\R$.  Namely, we show the following.

\begin{theorem}  \label{r to r}
Let $r \geq 3$, $p \geq 2$, and let $\Phi: \Diff_c^r(\R) \to \Diff_c^p(\R)$ be nontrivial.  Then $\Phi$ is topologically diagonal.  If $r =s$, then $\Phi$ is  $C^r$-topologically diagonal.  
\end{theorem}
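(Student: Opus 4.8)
The plan is to recover from the algebraic structure of $\Phi$ a collection of continuous embeddings $\R \to \R$ witnessing that $\Phi$ is topologically diagonal, by combining the commuting-subgroup dictionary of Section~\ref{comm} with the flow-continuity results of Section~\ref{cont}. Write $G = \Diff^r_c(\R)$. Since $r \neq 2$ the group $G$ is simple, so the nontrivial map $\Phi$ is injective; and since $G$ is perfect, restricting $\Phi$ to each connected component $C$ of $\R \setminus \fix(\Phi(G))$ keeps it injective. I would therefore treat one component at a time: build, for each such $C$, a single continuous embedding $f_C \colon \R \to C$ with $\Phi(g)|_C = f_C g f_C^{-1}$ for all $g$, and then take $\{f_i\}$ to be the union of these over the (at most countably many) components. (When $C = \R$ this produces global conjugation by a homeomorphism of $\R$.)

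So fix a component $C$, replace $G$ by its still-injective restriction to $C \cong \R$, and assume $\fix(\Phi(G)) = \emptyset$. The idea is to read $f_C$ off from the way $\Phi$ treats ``local'' subgroups. For a bounded open interval $J$, let $G_J \le G$ be the diffeomorphisms supported in $J$ (a copy of $\Diff^r_c(\R)$) and $G^J$ those fixing $J$ pointwise; these commute, are nonabelian, and are perfect, so applying Proposition~\ref{simpler prop} and Corollary~\ref{perfectcor} to $\Phi(G_J)$ and $\Phi(G^J)$ forces $\supp(\Phi(G_J))$ and $\supp(\Phi(G^J))$ to be complementary up to a nowhere-dense set, with $\supp(\Phi(G_J))$ shrinking as $J$ shrinks and with disjoint intervals $J$ giving disjointly-supported images. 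Fragmentation (Theorem~\ref{frag}) gives that the $G_J$ generate $G$, and in fact $G = \langle G_J, G^{J'}\rangle$ whenever $J' \Subset J$, so $\supp(\Phi(G)) = \overline{\bigcup_J \supp(\Phi(G_J))}$. For $x \in \R$ and intervals $J_n \searrow \{x\}$ I would then set $S(x) = \bigcap_n \overline{\supp(\Phi(G_{J_n}))}$. The \emph{first main task} is to strengthen this dictionary enough to prove that $S(x)$ is a single point, that $x \mapsto S(x)$ has dense image, and that the relation $g G_J g^{-1} = G_{g(J)}$ forces $\Phi(g)(S(x)) = S(g(x))$; granting this, $f_C := S$ is an injective map intertwining the two actions — so far purely algebraically.

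The \emph{heart of the argument}, and the step I expect to be the main obstacle, is upgrading $f_C$ to a genuine continuous embedding, for which the flow results of Section~\ref{cont} are exactly designed (they are what forbids Example~\ref{noncont}-type pathology). Given $x$ near $x'$, I would pick a $C^r$ flow $\{\phi^t\} \subset G$ supported on a large interval that contains $[x, x']$ and avoids the relevant bad sets, fixed-point free there, with $\phi^s(x) = x'$ for some small $s$; Proposition~\ref{rflow} (and Corollary~\ref{sflow} in the cut-open $S^1$ situation), applied at the target point $f_C(x)$, then says $\Phi(\{\phi^t\})$ is continuous in $t$ near $f_C(x)$, so $\Phi(\phi^s)$ carries $f_C(x)$ to a nearby point, which by the intertwining relation equals $f_C(\phi^s(x)) = f_C(x')$. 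Letting $s \to 0$ gives continuity of $f_C$; being injective and continuous it is strictly monotone, hence an open embedding onto an open interval whose complement in $C$ would be fixed by all of $\Phi(G)$ and is therefore empty, so $f_C$ is onto $C$. Disjointness of the images $f_i(\R)$ over distinct components is built into the construction; that $\bigcup_i \overline{f_i(\R)} = \supp(\Phi(G))$ lies in a compact set follows by exhibiting one $g_0 \in G$ whose (compactly supported) image $\Phi(g_0)$ meets every $f_i(\R)$; and the topologically-diagonal form holds because $f_i$ was built precisely to realize the support correspondence. This establishes that $\Phi$ is topologically diagonal.

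Finally, for the last sentence, suppose $r = p$; it remains to promote each continuous $f_i$ to a $C^r$ diffeomorphism onto its image. I would cover $f_i(\R)$ by intervals on which a suitable $C^r$ flow $\{\phi^t\} \subset G$ is, after applying $\Phi$, conjugate by $f_i$ to a flow $\{\psi^t\}$ produced by Szekeres' Theorem~\ref{szek}, hence of class $C^{r-1}$ off the endpoints, with $\Phi(\phi^t)|_{f_i(\R)} = \psi^{\lambda_i t}$ by Proposition~\ref{rflow}. In Szekeres' canonical coordinates $f_i$ becomes an affine map conjugated by the $C^{r-1}$ linearizing charts of the two flows, so $f_i \in C^{r-1}$, and a bootstrap on the functional equation $f_i \circ \phi^t = \psi^{\lambda_i t} \circ f_i$, using the $C^r$-regularity of $\phi^t$, recovers the last derivative; thus $f_i \in C^r$ and $\Phi$ is $C^r$-topologically diagonal.
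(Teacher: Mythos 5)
Your overall architecture matches the paper's (reduce to components of $\R\setminus\fix(\Phi(G))$, extract a point-to-point map from the algebraic data, use the flow-continuity results to prove continuity, then upgrade regularity when $r=p$), but the step you label the ``first main task'' --- showing that $S(x)=\bigcap_n\overline{\supp(\Phi(G_{J_n}))}$ is a single point --- is a genuine gap, not a routine strengthening of the dictionary. A priori $S(x)$ could be a Cantor set, or could consist of several points sitting in different ``copies'' of the action; ruling this out is essentially the content of the theorem, and Proposition \ref{simpler prop} and Corollary \ref{perfectcor} alone will not do it (they only give containments between supports and fixed sets, not that the relevant sets are singletons). The paper avoids ever having to prove such a statement: it sets $A_x=\fix(\Phi(G^x))$ where $G^x=G^{(-\infty,x]}\times G^{[x,\infty)}$, uses Corollary \ref{simpler prop cor} to see $A_x\neq\emptyset$, uses fragmentation to see $A_x\cap A_y=\emptyset$ for $x\neq y$, and then uses Proposition \ref{rflow} only to show that each point of $A_x$ is \emph{isolated}. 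It then \emph{chooses} one point $z\in A_x$ and defines $f(y)=\Phi(\phi^s)(z)$ by pushing $z$ around with flows; well-definedness, injectivity and equivariance all follow from the disjointness $A_y\cap A_{y'}=\emptyset$, with no need to know $A_x$ is a singleton. If you want to salvage your version, you should replace ``prove $S(x)$ is a point'' by this choose-one-point-and-propagate device; as written, the construction of $f_C$ rests on an unproved claim that is at least as hard as what you are trying to establish. (A smaller inaccuracy in the same part: disjoint intervals $J_1,J_2$ give $\supp(\Phi(G_{J_1}))\subset\fix(\Phi(G_{J_2}))$ by Corollary \ref{perfectcor}, which makes the \emph{interiors} of the supports disjoint, not the supports themselves.)

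On the final regularity step, your route through Szekeres linearization differs from the paper, which invokes (an adaptation of) Takens' theorem (Theorem \ref{takens}). Your bootstrap is shakier than you suggest: Szekeres only guarantees the flow $\{\psi^t\}$ is of class $C^{r-1}$ on the open interval, so its generating vector field $Y$ is only $C^{r-2}$, and the identity $f_i'\cdot X=\lambda\, (Y\circ f_i)$ obtained by differentiating $f_i\circ\phi^t=\psi^{\lambda t}\circ f_i$ in $t$ then yields $f_i\in C^{r-1}$, not $C^r$; recovering the last derivative requires an additional argument (e.g.\ exploiting that each individual time-$t$ map $\psi^{\lambda s}=\Phi(\phi^s)$ is $C^r$, which is in effect what Takens-type arguments do). I would either fill in that step carefully or follow the paper and cite Takens/Rybicki.
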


The proof of this theorem contains the essence of the proof of Theorem \ref{main}.  The only difference in the statement of Theorem \ref{main} is that $\Diff_c^r(\R)$ and/or $\Diff_c^p(\R)$ can be replaced with $\Diff_+^r(S^1)$ (and/or, $\Diff_+^p(S^1)$).  We deal with this easily at the end of this section.  

\begin{proof}[Proof of Theorem \ref{r to r}]
Let $G$ denote the group $\Diff_c^r(\R)$ and, as before, for a subset $U \subset \R$ let $G^U$ denote the subgroup $\{g \in G : gx = x \text{ for all } x \in U \}$. 

Let $\Phi: G \to G$ be a nontrivial homomorphism.  
Assume first for simplicity that $\fix(\Phi(G)) = \emptyset$.  In this case, we need only construct one map $f: \R \to \R$ such that $\Phi(g) = f g f^{-1}$ for all $g \in G$.  To construct $f$, we use Proposition \ref{rflow} (continuity of flows) along with Proposition \ref{simpler prop} to show that particular subgroups of point stabilizers in $G$ map under $\Phi$ to subgroups of point stabilizers.  This lets us build a point-to-point $\R \to \R$ map out of the data of $\Phi$.  

For $x \in \interior(I)$, let 
$$G^x  := \langle G^{(-\infty,x]},\, G^{[x,\infty)} \rangle.$$  
Since $G^{(-\infty,x]} $ and $G^{[x,\infty)}$ commute, we have 
\begin{equation} \label{product}
G^x \cong G^{(-\infty,x]} \times  G^{[x,\infty)} \end{equation}
Note that the set $G^x$ is contained in, but not equal to, the point stabilizer $G_x$ --  we will need to use the fact that $G^x$ decomposes as a direct product so the full point stabilizer will not work.   However, the fragmentation property still implies that 
\begin{equation}\label{generate eq}
\text{ if } g(x) \neq x, \text{ then } \langle G^x, g\rangle = G.
\end{equation}

Now let $A_x := \fix(\Phi(G^x))$.  The product structure of $G^x$ in (\ref{product}) means that  
$$\Phi(G^x) =  \Phi(G^{(-\infty,x]}) \times \Phi(G^{[x,\infty)}).$$

Corollary \ref{simpler prop cor} shows that two, nonabelian, commuting subgroups must have a common fixed point.  In particular, $\Phi(G^{(-\infty,x]})$ and $\Phi(G^{[x,\infty)})$ have a common fixed point, so $A_x = \fix(\Phi(G^x)) \neq \emptyset$.  Eventually we will see that $A_x$ consists of a \emph{single} fixed point and $f(x) = A_x$ will be our map.   

\begin{lemma}[Easy properties of $A_x$]\label{properties} 
Let $\Phi: G \to G$ be a homomorphism and assume that $\fix(\Phi(G)) = \emptyset$.  Let $A_x = \fix(\Phi(G^x))$ as above.  Then 
\begin{enumerate}
\item $A_x \neq \emptyset$
\item $A_x \cap A_y = \emptyset$ whenever $x \neq y$  
\item if $g \in G$ and $g(x) = y$, then $\Phi(g)(A_x) = A_y$
\end{enumerate}
\end{lemma}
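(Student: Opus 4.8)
The plan is to verify the three properties in order, each as a short deduction from material already in the excerpt. Property (1) is essentially already established in the text: since $G^x \cong G^{(-\infty,x]} \times G^{[x,\infty)}$ and each factor is nonabelian (these are diffeomorphism groups of half-open intervals, hence nonabelian), the two factors are commuting nonabelian subgroups, so their images $\Phi(G^{(-\infty,x]})$ and $\Phi(G^{[x,\infty)})$ are commuting nonabelian subgroups of $G$ (nonabelian because $\Phi$ is injective when restricted to these simple groups — or more precisely, because $G^{(-\infty,x]}\cong\Diff^r_c(\R)$ is simple so $\Phi$ restricted to it is injective, and similarly for the other factor; I should note that $\fix(\Phi(G))=\emptyset$ together with perfectness rules out either image being trivial). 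Then Corollary \ref{simpler prop cor} gives them a common fixed point, so $A_x = \fix(\Phi(G^x)) = \fix(\Phi(G^{(-\infty,x]}))\cap\fix(\Phi(G^{[x,\infty)}))\neq\emptyset$.

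For property (3), this is just the conjugation identity \eqref{benson}: if $g(x)=y$ then $g\, G^x\, g^{-1} = G^y$ (since $g$ carries $(-\infty,x]$ to $(-\infty,y]$ and $[x,\infty)$ to $[y,\infty)$ — here I use that $g$ is orientation-preserving), hence $\Phi(g)\Phi(G^x)\Phi(g)^{-1} = \Phi(G^y)$, and applying $\fix(\cdot)$ together with \eqref{benson} gives $\Phi(g)(A_x) = \Phi(g)\fix(\Phi(G^x)) = \fix(\Phi(g)\Phi(G^x)\Phi(g)^{-1}) = \fix(\Phi(G^y)) = A_y$.

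Property (2) is the main point. Suppose $x\neq y$; I want $A_x\cap A_y=\emptyset$. Since $x\neq y$ there is some $g\in G$ with $g(x)=y$, so by \eqref{generate eq} the subgroups $G^x$ and $g$ together generate $G$; equivalently (and more usefully) $G^x$ together with $G^y = gG^xg^{-1}$ generate $G$ — one checks this using fragmentation, since any diffeomorphism supported in a small interval lies in $G^x$ or $G^y$ depending on whether that interval avoids $x$ or $y$, and these generate. Therefore $\Phi(G^x)$ and $\Phi(G^y)$ generate $\Phi(G)$, so $\fix(\Phi(G^x))\cap\fix(\Phi(G^y)) \subseteq \fix(\Phi(G)) = \emptyset$. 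Hence $A_x\cap A_y=\emptyset$.

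I expect the only genuinely delicate point to be the generation claim used in property (2): that $G^x$ and $G^y$ generate all of $G$ when $x\neq y$. The cleanest route is to pick an open cover of $\R$ by intervals each of which misses $x$ or misses $y$ (possible since $x\neq y$: cover $\R$ by a small interval around $x$ missing $y$, a small one around $y$ missing $x$, and the two half-lines, then refine), apply the fragmentation property (Theorem \ref{frag}) to write any element of $G$ as a product of diffeomorphisms each supported in one cover element, and observe each such factor lies in $G^y$ or $G^x$ accordingly. I'd also want to double check the non-triviality of $\Phi$ on the factors $G^{(-\infty,x]}$ and $G^{[x,\infty)}$ in property (1): if $\Phi$ killed one of them, then since that factor together with its conjugates generates $G$ (fragmentation again), $\Phi$ would be trivial, contradicting $\fix(\Phi(G))=\emptyset$.
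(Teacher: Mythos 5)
Your proof is correct and follows essentially the same route as the paper: property (1) via the product decomposition of $G^x$ and Corollary \ref{simpler prop cor}, property (2) via fragmentation showing $G^x$ and $G^y$ generate $G$ so a common fixed point of the images would contradict $\fix(\Phi(G))=\emptyset$, and property (3) via the conjugation identity \eqref{benson}. The extra details you supply (nontriviality of $\Phi$ on each factor, the explicit open cover for fragmentation) are correct and merely fill in steps the paper leaves implicit.
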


\begin{proof}
1 follows from our discussion above.  To prove 2, note that by the fragmentation property (Theorem \ref{frag}) if $x \neq y$ then $G^x$ and $G^y$ together generate $G$.  Now if some point $z \in \R$ is fixed by both $\Phi(G^x)$ and $\Phi(G^y)$, then $z$ is fixed by every element of $\Phi(G) = \langle  \Phi(G^x), \Phi(G^y) \rangle$ contradicting our assumption that $\fix(\Phi(G)) = \emptyset$.  This shows that $A_x \cap A_y = \emptyset$.  

The proof of 3 is elementary:  If $g$ is a diffeomorphism with $g(x) = y$, then 
$$G^y = g G^x g^{-1}$$ 
and so 
\begin{equation}\label{eqAy}
A_y = \fix(\Phi(g G^x g^{-1})) = \Phi(g)(\fix(G^x)) = \Phi(g)(A_x).
\end{equation}
\end{proof}

Using Lemma \ref{properties}, we now show that $A_x$ consists of isolated points.  Let $z \in A_x$ and let $\{\phi^t\} \subset G$ be a flow such that $\phi^t(x) \neq x$ for $t \neq 0$, and such that $\Phi(\phi^t)$ is continuous in $t$ at $z$.  Such a flow exists by Proposition \ref{rflow}, in fact any flow $\phi^t$ with large enough support will work.  Let $J$ be the connected component of $\R \setminus \fix(\Phi(\phi^t))$ containing $z$.  Proposition \ref{rflow} says that there is a flow $\psi^t$ on $J$ such that  $\Phi(\phi^t)|_J = \psi^{\lambda t}$.  
If $\psi^{\lambda t}(z) = z$ for $t \neq 0$, then $z$ is fixed by both $\psi^{\lambda t}$ and $\Phi(G^x)$.  It follows from (\ref{generate eq}) above that $z$ is then fixed by all of $\Phi(G)$.  This contradicts our assumption.  Thus, $z$ is not fixed by any such flow.  

This shows that $z$ must be an isolated point of $A_x$: if not, there is some $z' \in A_x \cap J$ and some small $t \neq 0$ such that $\psi^{\lambda t}(z) = z'$.  But Property 3 of Lemma \ref{properties} says that $z' \in A_{\phi^t(x)}$, and Property 1 says that $A_{\phi^t(x)} \cap A_x = \emptyset$.  

 \begin{figure*}
   \labellist 
  \small\hair 2pt
   \pinlabel $x$ at 85 045 
   \pinlabel $y$ at 150 045
   \pinlabel $\phi^t$ at 125 060
  \pinlabel $z$ at 115 008
   \pinlabel $f(y)$ at 175 008
   \pinlabel $\Phi(\phi^t)$ at 148 022
         \endlabellist
  \centerline{
    \mbox{\includegraphics[width=4in]{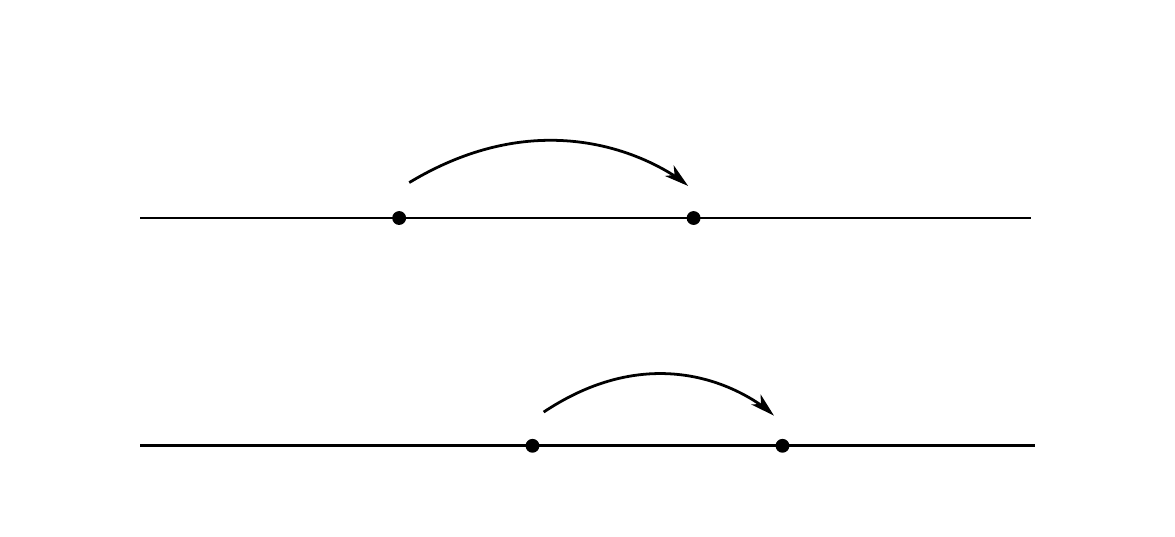}}}
 \caption{The definition of $f: \R \to \R$}
  \label{definingf}
  \end{figure*}

We now build a continuous map $f: \R \to \R$.   For $y \in \R$ define $f(y)$ as follows.  Take any flow $\{\phi^t\} \subset G$ such that $\phi^s(x) = y$ for some $s$ and such that $\Phi(\phi^t)$ is continuous in $t$ at $z$.  Then set $f(y) = \Phi(\phi^s)(z)$.  See Figure \ref{definingf}.   We will show that $f$ is 
\begin{enumerate}[(a)]
\item well-defined
\item continuous and injective
\item $G$-equivariant, meaning that $fg(y) = \Phi(g)(f(y))$ for all $g \in G$ and $y \in \R$
\item surjective 
\item $C^r$ in the case where $r = p$.  
\end{enumerate} 
These will all follow from the continuity of flows at $z$ and the easy properties of the sets $A_x$.  In particular, we will use the fact that for any flow $\{ \phi^t \}$ used in the definition of $f(y)$, we have
\begin{equation}
\Phi(\phi^s)(z) \in \Phi(\phi^s)(A_x) = A_y
\end{equation}

\noindent \textit{Well defined}.
Let $\{\phi^t\}$ and $\{\eta^t\}$ be two flows such that $\phi^s(x) = \eta^r(x) = y$ and such that both $\Phi(\phi^t)$ and $\Phi(\eta^t)$ are continuous in $t$ at $z$.  Suppose for contradiction that $\Phi(\phi^s)(z) \neq \Phi(\eta^r)(z)$.  Then $\Phi(\phi^s)(z) = \Phi(\eta^q)(z)$ for some $q \neq r$.  But then $\eta^q(x) = y'$ for some $y' \neq y$.  It follows that $\Phi(\eta^q)(z) \in A_{y'} \cap A_y$, a contradiction.  

\bigskip
\noindent \textit{Continuity and injectivity}.
Continuity is immediate from the definition of $f$ since $\Phi(\phi^t)$ is continuous on the connected component of its support that contains $z$.  Injectivity follows from continuity and the fact that $A_{y'} \cap A_y = \emptyset$ for $y \neq y'$.

\bigskip
\noindent \textit{$G$-equivariance}.
By construction, the image of $f$ contains exactly one point of $A_y$ for each $y \in \R$.  
Given $y \in \R$ and $g \in G$, let $\{\phi^t\}$ be a flow such that $\phi^1(x) = y$ and $\phi^s(x) = g(y)$ for some $s$.  We may also choose $\{\phi^t\}$ to have as large support as we want, so as to ensure it is continuous at $z$.  Then 
$$f(y) = \Phi(\phi^1)(z)$$ and 
$$\Phi(g)f(y) = \Phi(g) \Phi(\phi^1)(z) = \Phi(g \phi^1) (z)  \in A_{g(y)}.$$ 
We also have 
$$f(g(y)) = f(\phi^s(x)) = \Phi(\phi^s)(z) \in A_{g(y)}$$
so it follows that $fg(y) = \Phi(g)(f(y))$.  

\bigskip
\noindent \textit{Surjectivity}.
The $G$-equivariance of $f$ implies that the image of $f$ is $\Phi(G)$-invariant.  Since we assumed that $\fix(\Phi(G)) = \emptyset$, the image of $f$ must be $\R$.  

\bigskip
It remains to show that $f$ is $C^r$ under the assumption that $p = r$.  To do this, we need a generalization of the following theorem of Takens.

\begin{theorem}[Takens \cite{Ta}] \label{takens}
Let $f: M_1 \to M_2$ be a bijection between two smooth manifolds $M_1$ and $M_2$ with the property that $g: M_1 \to M_2$ is a $C^\infty$ diffeomorphism if and only if $fgf^{-1}$ is a $C^\infty$ diffeomorphism.  Then $f$ is also a $C^\infty$ diffeomorphism. 
\end{theorem}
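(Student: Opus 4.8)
The plan is to extract from the hypothesis first that $f$ is a homeomorphism, and then to prove $f$ is $C^\infty$ \emph{locally} by pushing smooth ``flow coordinates'' on $M_1$ through $f$ to $M_2$. Throughout, the hypothesis says precisely that $g\mapsto fgf^{-1}$ is a group isomorphism $\Theta\colon\Diff^\infty(M_1)\to\Diff^\infty(M_2)$, and by Fact~\ref{easyfact} it satisfies $\fix(\Theta(g))=f(\fix(g))$ for every $g$. The regularity of $f$ will come entirely from the regularity of one–parameter subgroups (flows of vector fields) under $\Theta$.

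First I would show $f$ is a homeomorphism. On any manifold, a subset $C$ is closed if and only if it is an intersection of fixed–point sets of $C^\infty$ diffeomorphisms: intersections of $\fix(g)$'s are closed, and conversely, for a closed $C$ and a point $x\notin C$ one builds a $C^\infty$ diffeomorphism supported in a small ball about $x$ that fixes $C$ pointwise but moves $x$, so $C=\bigcap_{x\notin C}\fix(g_x)$. Applying $f$ to such a representation and using $\fix(\Theta(g))=f(\fix(g))$, we see $f$ (and, by symmetry of the hypothesis, $f^{-1}$) sends closed sets to closed sets; hence $f$ is a homeomorphism.

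Next I would transport flows. Let $X$ be a compactly supported $C^\infty$ vector field on $M_1$ with flow $\{\phi^t\}\subset\Diff^\infty(M_1)$, and set $\psi^t:=\Theta(\phi^t)=f\phi^tf^{-1}$. Then $\{\psi^t\}$ is a one–parameter subgroup of $\Diff^\infty(M_2)$, and since $f$ is a homeomorphism and $(t,y)\mapsto\phi^t(y)$ is continuous, the map $(t,y)\mapsto\psi^t(y)=f\!\left(\phi^t(f^{-1}(y))\right)$ is jointly continuous: $\{\psi^t\}$ is a \emph{continuous} flow all of whose time–$t$ maps are $C^\infty$. A theorem of Bochner and Montgomery (a continuous $\R$-action on a manifold by $C^\infty$ diffeomorphisms is automatically $C^\infty$) then shows $\{\psi^t\}$ is a $C^\infty$ flow, so $(t,y)\mapsto\psi^t(y)$ is $C^\infty$. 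Now fix $p\in M_1$ and choose compactly supported $C^\infty$ vector fields $X_1,\dots,X_n$ ($n=\dim M_1$) whose values span $T_qM_1$ for all $q$ near $p$, with flows $\{\phi^t_i\}$ and conjugates $\psi^t_i:=\Theta(\phi^t_i)$. For $q_0$ near $p$ the map $\Psi(t_1,\dots,t_n):=\phi^{t_1}_1\circ\cdots\circ\phi^{t_n}_n(q_0)$ is a $C^\infty$ diffeomorphism of a neighborhood of $0\in\R^n$ onto a neighborhood of $q_0$ (its differential at $0$ has columns $X_1(q_0),\dots,X_n(q_0)$). Inserting $f^{-1}f$ between successive factors gives
$$ f\circ\Psi(t_1,\dots,t_n)=\psi^{t_1}_1\circ\cdots\circ\psi^{t_n}_n\big(f(q_0)\big), $$
which is a composition of maps that are $C^\infty$ in all their variables, hence $C^\infty$ in $(t_1,\dots,t_n)$. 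Therefore $f=(f\circ\Psi)\circ\Psi^{-1}$ is $C^\infty$ near $q_0$, and since $p$ was arbitrary, $f\in C^\infty$; applying the same to $f^{-1}$ shows $f$ is a $C^\infty$ diffeomorphism.

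The main obstacle is the regularity of $t\mapsto\psi^t$ in the previous paragraph: \emph{a priori} a homomorphism $\R\to\Diff^\infty$ can be badly discontinuous (Example~\ref{noncont}), and such behavior would destroy the flow–coordinate computation. The point is that this pathology cannot occur here, because $\{\psi^t\}$ is the conjugate of an \emph{honest} smooth flow by a \emph{homeomorphism}, which forces joint continuity for free; the only remaining work is the upgrade from continuous to $C^\infty$, which is exactly what the Bochner–Montgomery theorem supplies. (If one wanted to avoid citing Bochner–Montgomery, the same upgrade can be obtained directly for $\R$-flows, but that is the one genuinely analytic ingredient of the argument.)
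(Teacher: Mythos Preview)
The paper does not prove Takens' theorem at all: it is stated as Theorem~\ref{takens} with a citation to \cite{Ta} and then invoked as a black box (the paper even remarks that ``Takens' original proof can be easily adapted'' and ``we omit the details''). So there is no in-paper proof to compare against.

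That said, your argument is a correct and standard route to the result. The closed-set characterization (a set is closed iff it is an intersection of fixed-point sets of smooth diffeomorphisms), combined with $\fix(fgf^{-1})=f(\fix g)$, cleanly gives that $f$ is a homeomorphism. The second step is also sound: once $f$ is a homeomorphism, conjugating an honest smooth flow yields a jointly \emph{continuous} one-parameter group in $\Diff^\infty(M_2)$, and Bochner--Montgomery upgrades this to a $C^\infty$ flow; your flow-box computation $f\circ\Psi=\psi_1^{t_1}\cdots\psi_n^{t_n}(f(q_0))$ then exhibits $f$ locally as a smooth map precomposed with a smooth chart. Two small remarks: first, the statement as printed has the (harmless) typo ``$g:M_1\to M_2$'' where ``$g:M_1\to M_1$'' is intended, which you silently and correctly fix; second, your appeal to Bochner--Montgomery is doing the real analytic work, so if you want the argument to be self-contained you would need to supply that upgrade (for $\R$-flows this is not hard: continuity in $t$ forces the infinitesimal generator to exist as a $C^\infty$ vector field). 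Otherwise the proof is complete.
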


In our case, $f: \R \to \R$ satisfies the property that for each $g \in G$ the conjugate $fg f^{-1} = \Phi(g)$ is $C^r$.  We also know that $\Phi(G)$ acts transitively on the image of $f$.  Takens' original proof can be easily adapted to this case to show that $f$ is of class $C^r$.  We omit the details.  The reader may also consult the main theorem of Rybicki in \cite{Ry} for a similar (but more complicated to state) theorem that applies directly to our situation.  

Thus, we have shown that Theorem \ref{r to r} holds provided that $\fix(\Phi(G)) = \emptyset$.  
If instead $\fix(\Phi(G)) \neq \emptyset$, we can pick any connected component $J$ of $\R \setminus \fix(\Phi(G))$ and run the proof above for the induced group homomorphism $\Phi_J: G \to \Diff^r(J)$  given by $\Phi_J(g) = \Phi(g)|_J$.  What we get is a homeomorphism (or $C^r$-diffeomorphism in the $p=r$ case) $f: \R \to J$ such that  $f (g(x)) = \Phi(g)(f(x))$ for all $g \in G$.  Doing this separately on each connected component defines the topologically diagonal embedding.  

This completes the proof of Theorem \ref{r to r}

\end{proof}

Using that the image of our homomorphism is $\Diff^p_c(\R)$, i.e. all diffeomorphisms in the image have compact support, we may conclude something stronger about the topologically diagonal embedding.

\begin{proposition} \label{finitelymany}
Suppose $p, r \geq 2$ and $\Phi: \Diff^r_c(\R) \to \Diff^p_c(\R)$ is a topologically diagonal embedding.  Then 
there are only finitely finitely many connected components of $\R \setminus \fix \Phi(\Diff^r_c(\R))$.  
\end{proposition}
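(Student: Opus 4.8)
The plan is to argue by contradiction: if there are infinitely many components, extract an accumulation point of the ``little intervals'' $f_i(\R)$ and show that near it the $C^p$--hypothesis forces $\Phi$ to degenerate so badly that it cannot be injective.

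First I would record the elementary bookkeeping. The components of $\R\setminus\fix\Phi(\Diff^r_c(\R))$ are exactly the images $J_i:=f_i(\R)$ of the embeddings defining the topologically diagonal structure: each $J_i$ is an open interval, the $J_i$ are pairwise disjoint, and $\fix\Phi(\Diff^r_c(\R))=\R\setminus\bigcup_i J_i$ (a point $f_i(y)\in J_i$ is fixed by all of $\Phi(G)$ iff $y$ is fixed by all of $\Diff^r_c(\R)$, which never happens). Also, since $\supp g$ is compact, $f_i(\supp g)$ is a compact subinterval of the open interval $J_i$, so $\Phi(g)$ equals the identity on a one--sided neighbourhood (inside $J_i$) of each endpoint of $J_i$. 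Thus it suffices to show the collection $\{f_i\}$ is finite. Suppose it is not. All the $J_i$ lie in a fixed compact set $K$ and are disjoint, so $\sum_i|J_i|\le |K|<\infty$; hence, after passing to a subsequence, there is $x_\infty\in K$ with $\overline{J_{i_k}}\to\{x_\infty\}$ and $|J_{i_k}|\to 0$. Since $x_\infty$ cannot lie in any open $J_j$ (that would force infinitely many $J_{i_k}\subset J_j$), we have $x_\infty\in\fix\Phi(\Diff^r_c(\R))$.

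The core of the argument is the following rigidity at $x_\infty$, proved using only invariance of the $J_{i_k}$. Fix $g\in\Diff^r_c(\R)$ and write $J_{i_k}=(a_k,b_k)$. As noted, $\Phi(g)$ is the identity on $[a_k,a_k+\delta)$ and on $(b_k-\delta,b_k]$ for some $\delta>0$, and $\Phi(g)\in\Diff^p$, so $\Phi(g)(a_k)=a_k$, $\Phi(g)(b_k)=b_k$, $\Phi(g)'(a_k)=\Phi(g)'(b_k)=1$, and $\Phi(g)^{(j)}(a_k)=\Phi(g)^{(j)}(b_k)=0$ for $2\le j\le p$. Integrating over $[a_k,b_k]$ gives $\int_{a_k}^{b_k}(\Phi(g)'-1)=0$ and $\int_{a_k}^{b_k}\Phi(g)^{(j)}=0$ for $2\le j\le p$, so each continuous function $\Phi(g)^{(j)}$ ($1\le j\le p$) attains its ``identity value'' ($1$ if $j=1$, $0$ if $j\ge 2$) somewhere on $J_{i_k}$. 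Letting $k\to\infty$, continuity of $\Phi(g)^{(j)}$ and $J_{i_k}\to x_\infty$ yield $\Phi(g)'(x_\infty)=1$ and $\Phi(g)^{(j)}(x_\infty)=0$ for $2\le j\le p$; i.e. every $\Phi(g)$ is tangent to the identity to order $p$ at $x_\infty$. Consequently $\|\Phi(g)|_{\overline{J_{i_k}}}-\mathrm{id}\|_{C^j}\to 0$ for $0\le j\le p$. Now rescale: let $L_k\colon J_{i_k}\to(0,1)$ be the orientation--preserving affine bijection and set $\rho_k(g):=L_k\,\Phi(g)|_{J_{i_k}}\,L_k^{-1}$. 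Since $\rho_k$ is conjugation by the bijection $L_k\circ f_{i_k}\colon\R\to(0,1)$, each $\rho_k\colon\Diff^r_c(\R)\hookrightarrow\Diff^p_c\big((0,1)\big)$ is an \emph{injective} homomorphism; and because affine conjugation satisfies $\rho_k(g)'=\Phi(g)'\circ L_k^{-1}$ and $\rho_k(g)^{(j)}=|J_{i_k}|^{\,j-1}\,\Phi(g)^{(j)}\circ L_k^{-1}$ for $j\ge 2$, the previous estimates give $\rho_k(g)\to\mathrm{id}$ in $C^p$ for every fixed $g$.

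It remains to derive a contradiction from the existence of a sequence of faithful homomorphisms $\rho_k\colon\Diff^r_c(\R)\hookrightarrow\Diff^p_c((0,1))$ whose values all tend to the identity in the $C^p$ topology. The natural route is dynamical: choose $g$ whose support contains both a hyperbolic attracting and a hyperbolic repelling fixed point. Then $\rho_k(g)$, being topologically conjugate to $g$, has a two--sided topological attractor $p_k$ and a two--sided topological repeller $q_k$, so $\rho_k(g)'(p_k)\le 1$ and $\rho_k(g)'(q_k)\ge 1$; combined with $C^1$--convergence this re--proves $\Phi(g)'(x_\infty)=1$, and then one tracks the second--order data ($\rho_k(g)$ is $C^2$ with $\rho_k(g)''\to 0$ uniformly, yet must retain genuine attracting/repelling behaviour on intervals whose lengths do not shrink relative to $|J_{i_k}|$) to reach a contradiction for large $k$. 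Making this last step airtight --- i.e. showing that no sequence of faithful homomorphisms $\Diff^r_c(\R)\to\Diff^p_c((0,1))$ can $C^p$--converge to the trivial one --- is the main obstacle; it is where the hypothesis $p\ge 2$ is genuinely used, and it may be cleanest to phrase it as a standalone lemma about $C^2$ diffeomorphisms realizing a prescribed topological conjugacy type while being $C^2$--close to the identity, applied to a suitably chosen $g\in\Diff^r_c(\R)$.
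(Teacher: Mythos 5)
Your reduction is carried out correctly and goes further than the paper's own (very terse) argument in making the setup explicit: the components $J_i=f_i(\R)$ all lie in a compact set, so if there are infinitely many you can extract $\overline{J_{i_k}}\to\{x_\infty\}$; since each $\Phi(g)$ is $C^p$ and equals the identity on one-sided neighbourhoods of the endpoints $a_k,b_k\to x_\infty$, every $\Phi(g)$ is $p$-tangent to the identity at $x_\infty$; and the affinely rescaled faithful actions $\rho_k$ on $(0,1)$ therefore converge to the trivial action in $C^p$, pointwise in $g$. All of that is right. But the final step --- the actual contradiction --- is missing, and the route you sketch for it cannot work. The topological conjugacy class of a \emph{single} element $g$ imposes no obstruction to being $C^p$-close (even $C^\infty$-close) to the identity: if $g$ is the time-one map of a compactly supported vector field $X$ (and you may certainly choose such a $g$), then the time-$\epsilon$ map of $X$ is topologically conjugate to $g$ and converges to the identity in $C^\infty$ as $\epsilon\to 0$. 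So "hyperbolic attractor plus repeller, hence $\rho_k(g)'\le 1$ here and $\ge 1$ there" is perfectly compatible with $\rho_k(g)\to\mathrm{id}$ in $C^p$; tracking second-order data of one element will never close the argument.

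The obstruction is genuinely group-theoretic, and this is where Kopell-type/distortion estimates must enter. The standard mechanism (this is what the paper is gesturing at when it says the conjugated actions "will necessarily have unbounded derivatives" and cites Navas): choose $g,h\in\Diff^r_c(\R)$ with $g\ne\mathrm{id}$ supported on an interval $I_0$ and $h$ translating $I_0$ so that the $h^n(I_0)$ are pairwise disjoint. Inside each $J_{i_k}$ the images $\Phi(h)^n\bigl(f_{i_k}(I_0)\bigr)$ are pairwise disjoint, so their total length is at most $|J_{i_k}|\to 0$, while $\|\Phi(h)\|_{C^2}$ is a fixed finite quantity on a compact set. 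The quantitative Kopell/control-of-distortion lemma (Navas, Section 4.1) then forces $\Phi(g)|_{J_{i_k}}=\mathrm{id}$ for $k$ large, contradicting the fact that $\Phi(g)|_{J_{i_k}}$ is conjugate to $g\ne\mathrm{id}$. Note also that the paper's proof uses Takens' theorem to make the transition maps $f_jf_i^{-1}$ between components $C^{\min(p,r)}$, which is another way of packaging the same rigidity; your rescaling by affine maps $L_k$ is a reasonable substitute, but only once the group-level estimate above replaces the single-element dynamical argument.
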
 

\begin{proof}
This will follow from the fact that $\Phi(G)$ acts by $C^2$ diffeomorphisms along with the ``diagonallity" of the action of $\Phi(G)$.  That the embedding is topologically diagonal means exactly that for any two connected components $J_1$ and $J_2$ of $\R \setminus \fix \left( \Phi(G) \right)$ with embeddings $f_i: \R \to J_i$, we have 
$$(f_2 f_1^{-1}) \circ \Phi(g)|_{J_1} \circ (f_1f_2^{-1})  = \Phi(g)|_{J_2}$$
for each $g \in G$.  
In other words, $f_2f_1^{-1}$ conjugates the action of $G$ on $J_1$ with the action of $G$ on $J_2$.  
We also know that there is some \emph{compact} subset of $\R$ that contains all the connected components $J_i$ that are bounded.   Moreover, Taken's theorem (Theorem \ref{takens} below) implies that each $f_2f_1^{-1}$ is of class $C^s$ where $s = \min(p, r) \geq 2$.  One can show that such an action can be made $C^1$, but it will necessarily have unbounded derivatives.   See e.g. \cite{Na} for more details.  
\end{proof}

\boldhead{Finishing the proof of Theorem \ref{main}}
We conclude by stating the necessary modifications to replace either copy of $\R$ with $S^1$ in the statement of Theorem \ref{r to r} and therefore prove Theorem \ref{main}.  There are three cases to consider.  

\bigskip \noindent 1. Homomorphisms $\Phi: \Diff^r_+(S^1) \to \Diff^p_c(\R)$
 
There are no nontrivial such homomorphisms since $\Diff^r_+(S^1)$ is simple and has torsion. 

\bigskip \noindent 2. Homomorphisms $\Phi: \Diff^r_c(\R) \to \Diff^p_+(S^1)$

For this, the proof of Theorem \ref{r to r} above works verbatim and shows that this embedding is topologically diagonal.  Note in particular that there must be some point in $S^1$ fixed by every diffeomorphism in the image of $\Phi$

\bigskip \noindent 3. Homomorphisms $\Phi: \Diff^r_+(S^1) \to \Diff^p_+(S^1)$

$\Diff^r_+(S^1)$ contains many copies of $\Diff^r_c(\R)$ (as in Example \ref{easy} c) since there are many embeddings of $\R$ in $S^1$.  For any such copy, consider the restriction of $\Phi$ to the $\Diff^r_c(\R)$ subgroup, a homomorphism $\Diff^r_c(\R) \to \Diff^p_+(S^1)$.  We know already that this must be topologically diagonal.  
The fact that this is true for \emph{any} copy means that the embeddings are compatible, in the sense that if $f_i$ and $f_j$ are any two maps $\R \to S^1$ coming from any two embeddings of copies of $\Diff^r_c(\R)$, then $f_i$ and $f_j$ agree on their common domain.  It follows that the $f_i$ patch together to define a globally defined, continuous map $f: S^1 \to S^1$, which is locally a homeomorphism.   The fact that $f$ is equivariant with respect to the action of $\Diff^r_+(S^1)$ and that $\Diff^r_+(S^1)$ contains finite order rotations means that $f$ must be a global homeomorphism.  Again, we can use Takens' theorem to show that $f$ is $C^r$ provided that $p=r$.  

It is also possible to show that $\Phi$ is topologically diagonal in this case by using flows on $S^1$ and Corollary \ref{ssflow}.

\section{Application: dimension as an algebraic invariant} \label{ghys}

Recall the statement of Theorem \ref{main cor}
\begin{dimthm}Let $r \geq 3$, $p \geq 2$, let $M_1$ be any manifold and $M_2$ a 1-manifold.  Suppose that $\Phi: \Diff_c^r(M_1) \to \Diff_c^p(M_2)$ is an injective homomorphism.  Then $\dim(M_1) = 1$ and $\Phi$ is topologically diagonal  If $r=p$, then $\Phi$ is $C^r$-topologically diagonal. 
\end{dimthm}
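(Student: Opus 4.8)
The plan is to reduce everything to Theorem \ref{main}. The one substantive point is the \emph{dimension bound}: if $M_2$ is a $1$-manifold and $\Phi:\Diff_c^r(M_1)\to\Diff_c^p(M_2)$ is injective, then $\dim M_1=1$. Granting this, Theorem \ref{main} applied to $\Diff_c^r(M_1)$ immediately yields that $\Phi$ is topologically diagonal, and $C^r$-topologically diagonal when $r=p$. To prove the dimension bound, restrict $\Phi$ to $\Diff_c^r(U)$ for a coordinate ball $U\cong\R^n$ of $M_1$ (still injective), so we may take $M_1=\R^n$ with $n:=\dim M_1$; passing to a connected component of $M_2\setminus\fix(\Phi(\Diff_c^r(\R^n)))$ — on which $\Phi$ stays injective, by simplicity of $\Diff_c^r(\R^n)$ — we may further assume $M_2$ is an open interval or a circle and $\fix(\Phi(\Diff_c^r(\R^n)))=\emptyset$. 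We must then show $n=1$.

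The model to keep in mind is the proof of Theorem \ref{r to r}. There a point $x\in\R$ was realised as the common boundary of its two complementary rays, producing a subgroup $G^x=G^{(-\infty,x]}\times G^{[x,\infty)}$ of its stabiliser with two key features: it is a product of commuting, perfect, nonabelian subgroups (so, by Corollary \ref{simpler prop cor} applied in the $1$-manifold $M_2$, its image has nonempty fixed set $A_x:=\fix(\Phi(G^x))$), and together with any single element displacing $x$ it generates $\Diff_c^r(\R)$ (so, via the fragmentation property and Proposition \ref{simpler prop} together with the continuity of flows of Proposition \ref{rflow} and Corollary \ref{sflow}, $A_x$ is pinned down to an isolated point); the assignment $x\mapsto A_x$ then built the equivariant map realising $\Phi$. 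For $n\ge2$ this template provably breaks down — in $\R^n$ a point is a transverse intersection of codimension‑$1$ submanifolds whose complements never together cover $\R^n$, so no product of commuting subgroups carving out a point can simultaneously satisfy the two features above — and it is exactly this breakdown that must be converted into a contradiction. The mechanism is the \emph{over-abundance of commuting subgroups near a point of $\R^n$}: inside $\Diff_c^r(\R^n)$ one finds commuting, perfect, nonabelian subgroups in configurations (roughly, $n$ mutually transverse copies of the one‑dimensional picture) whose images, by Corollary \ref{perfectcor}, must have essentially disjoint supports in $M_2$, by Proposition \ref{simpler prop} must each fix open subsets of $M_2$, and by fragmentation (Theorem \ref{frag}) together cannot leave $\fix(\Phi(\Diff_c^r(\R^n)))$ empty — a contradiction — unless $n=1$. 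Equivalently, if it did run, the construction would produce a continuous injection $\R^n\hookrightarrow M_2$, impossible for a $1$-manifold $M_2$ when $n\ge2$.

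The main obstacle is pinning down that last configuration and its consequences. One must choose the right commuting subgroups of $\Diff_c^r(\R^n)$ — the higher‑dimensional analogues of $G^{(-\infty,x]}$, $G^{[x,\infty)}$ and of the flows used to move a point — verify that their $\Phi$-images satisfy the hypotheses of Propositions \ref{simpler prop} and \ref{keypropS1} (nonabelian with nonabelian centraliser, and perfect where Corollary \ref{perfectcor} is invoked), and re‑establish for the $\R^n$ source the parts of Sections \ref{comm}--\ref{mainpf} that were proved only for $1$-manifold sources, namely the detection of set intersections by nonabelian centralisers (Corollary \ref{detect cor}) and the flow‑continuity results — which go through because the centraliser in $\Diff_c^r(\R^n)$ of the diffeomorphisms fixing an open set $U$ is essentially $\Diff_c^r(\interior U)$, nonabelian exactly when $\interior U\ne\emptyset$. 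With these in hand the generation/fragmentation bookkeeping forces the contradiction for $n\ge2$, hence $\dim M_1=1$, and Theorem \ref{main} then supplies the ``topologically diagonal'' (and, for $r=p$, $C^r$) conclusion, completing the proof of Theorem \ref{main cor}.
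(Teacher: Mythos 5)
Your overall reduction is correct and matches the paper's: everything hinges on proving $\dim M_1=1$, after which Theorem \ref{main} supplies the topologically diagonal conclusion, and restricting to $\Diff_c^r(B^n)$ for an embedded ball (or, for the circle case, to a subgroup supported on $B^2\times S^{n-2}$) is exactly how the paper starts. The gap is in the core of the dimension bound. Your proposed mechanism --- ``$n$ mutually transverse commuting subgroups'' whose images must have essentially disjoint supports (Corollary \ref{perfectcor}), each fix an open set (Proposition \ref{simpler prop}), and yield a contradiction ``by fragmentation'' --- never actually produces a contradiction: every step in that chain is equally true when $n=1$ (two disjointly supported perfect nonabelian subgroups of $\Diff_c^r(\R)$ also have images with essentially disjoint supports, each fixing an open interval, with no contradiction), and the fact that the point-stabilizer construction of Theorem \ref{r to r} ``breaks down'' for an $\R^n$ source does not rule out the existence of an injective homomorphism. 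You need a positive algebraic invariant that holds in $\Diff_c^p(M_2)$ but fails in $\Diff_c^r(B^n)$ for $n\geq 2$, and your sketch does not isolate one.

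The paper's obstruction is concrete and quite different from anything in your outline: Lemma \ref{cent lemma} shows, via the uniqueness of the Szekeres flow, that every $g\in\Diff_c^p(\R)$ satisfies $C(g)=C(g^2)$. For $n\geq 2$ this fails in the source: take $g\in\Diff_c^r(B^n)$ acting as rotation by $\pi$ on an embedded cylinder $B^2\times[0,1]^{n-2}$, so $g^2$ is the identity there and some $h$ supported in the cylinder commutes with $g^2$ but not with $g$; injectivity of $\Phi$ then contradicts Lemma \ref{cent lemma}. When $M_2=S^1$ the invariant fails in the target as well (finite-order rotations have $C(g)\neq C(g^2)$), so the paper adds a further step: embed $H\cong\Diff_c^r(\R)$ in $\Diff_c^r(B^2)$ acting on the radii of an annulus, use Theorem \ref{main} and Proposition \ref{finitelymany} to see that $\fix(\Phi(H))$ has finitely many components, say $N$, and then work with a rotation by $\pi/N!$ on a cylinder disjoint from $\supp(H)$; the power $\Phi(g)^{N!}$ preserves each complementary interval of $\fix(\Phi(H))$ and is supported in the intervals fixed by $\Phi(H)$, which reduces the contradiction to the interval case of Lemma \ref{cent lemma}. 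Without an obstruction of this kind your argument does not close.
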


To prove this it will be enough to show that $\dim(M_1) = 1$, since in that case it follows from Theorem \ref{main} that $\Phi$ is topologically diagonal.  

We start with an easy corollary of Szekeres's theorem.
\begin{corollary}\label{sz cor}  
Let $r \geq 2$ and let $g$ be a $C^r$-diffeomorphism of $I = [0,1)$ with no interior fixed points.  Then $g^2$ and $g$ have the same centralizer in $\Diff^r_+(I)$.  
\end{corollary}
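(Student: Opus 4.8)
The plan is to apply Szekeres' theorem (Theorem~\ref{szek}) to both $g$ and $g^2$ and to exploit its uniqueness clause. First I would check that $g^2$ also has no interior fixed points: since $g$ is orientation-preserving and fixed point free on $\interior(I)$, either $g(x) > x$ for every $x \in \interior(I)$ or $g(x) < x$ for every such $x$, and in either case the same inequality holds for $g^2$. Hence Szekeres' theorem applies to $g^2$ as well as to $g$.

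Next, let $\{g^t\}$ be the Szekeres flow of $g$, so that $g^1 = g$, the flow is $C^{r-1}$ on $(0,1)$ and $C^1$ on $I$, and $C_{\Diff^1_+(I)}(g) = \{g^t : t \in \R\}$. The reparametrized one-parameter family $t \mapsto g^{2t}$ is again a flow of the same regularity, and its time-one map is $g^{2} $. By the uniqueness assertion in Szekeres' theorem it must therefore be the Szekeres flow of $g^2$, which gives
$$C_{\Diff^1_+(I)}(g^2) = \{g^{2t} : t \in \R\} = \{g^t : t \in \R\} = C_{\Diff^1_+(I)}(g),$$
the middle equality holding simply because $t \mapsto 2t$ is a bijection of $\R$. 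Finally I would pass to $C^r$: since $\Diff^r_+(I) \subset \Diff^1_+(I)$, one has $C_{\Diff^r_+(I)}(g) = C_{\Diff^1_+(I)}(g) \cap \Diff^r_+(I)$ and likewise for $g^2$, so the two $C^r$-centralizers coincide as well.

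I do not expect a genuine obstacle here; the corollary is essentially immediate from Szekeres' theorem. The only points that need a sentence of care are the observation that $g^2$ remains fixed point free on the interior, and the remark that ``$\{g^t\}$'' and ``$\{g^{2t}\}$'' denote the same subgroup — indeed the identical argument shows $g^n$ has the same centralizer as $g$ for every nonzero $n$, but only $n=2$ is needed in the sequel.
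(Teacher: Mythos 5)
Your argument is correct and is exactly the paper's proof, just written out in more detail: the paper likewise invokes the uniqueness clause of Szekeres' theorem to identify $C_{\Diff^1_+(I)}(g)$ with $C_{\Diff^1_+(I)}(g^2)$ as the common flow $\{g^t\}=\{g^{2t}\}$, and then intersects with $\Diff^r_+(I)$. The two preliminary observations you flag (that $g^2$ is still fixed point free on the interior, and that reparametrizing $t\mapsto 2t$ does not change the image of the flow) are indeed the only details the paper leaves implicit.
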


\begin{proof}
By uniqueness of the flow in Szekeres' theorem, $C_{\Diff^1_+(I)}(g) = C_{\Diff^1_+(I)}(g^2)$.  Since 
$$C_{\Diff^r_+(I)}(g) = C_{\Diff^1_+(I)}(g) \cap \Diff^r(I)$$ 
we have also 
$$C_{\Diff^r_+(I)}(g) = C_{\Diff^r_+(I)}(g^2).$$  

\end{proof}

With a little more work, we can show that Corollary \ref{sz cor} is also true for compactly supported diffeomorphisms of $\R$.

\begin{lemma}\label{cent lemma} 
Let $r \geq 2$ and let $g \in \Diff^r_c(\R)$.  Then $C_{\Diff^r_c(\R)}(g) = C_{\Diff^r_c(\R)}(g^2)$ 
\end{lemma}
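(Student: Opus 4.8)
The plan is to reduce to the interval case handled in Corollary \ref{sz cor} by decomposing $\R$ according to the fixed set of $g$. Clearly $C_{\Diff^r_c(\R)}(g) \subseteq C_{\Diff^r_c(\R)}(g^2)$, so the only content is the reverse inclusion: if $h$ commutes with $g^2$, then $h$ commutes with $g$. First I would observe that $\fix(g^2) = \fix(g)$ is false in general (a diffeomorphism can swap two fixed points of its square), but on $\R$ this cannot happen: since $g$ is orientation-preserving and compactly supported on $\R$, it has a fixed point, and on each side of a fixed point $g$ cannot interchange two points without an intermediate fixed point. More carefully, $g$ preserves each component of $\R \setminus \fix(g)$ (by the intermediate value theorem and orientation-preservation), and on such a component $g$ is fixed-point-free, hence $g^2$ is fixed-point-free there too; conversely on $\fix(g)$ both $g$ and $g^2$ are the identity. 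So $\fix(g) = \fix(g^2)$, and the components of $\R\setminus\fix(g)$ and of $\R\setminus\fix(g^2)$ coincide.

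Next, let $h \in C_{\Diff^r_c(\R)}(g^2)$. By Fact \ref{easyfact}, $h$ preserves $\fix(g^2) = \fix(g)$, hence permutes the components of $\R \setminus \fix(g)$. I would argue that $h$ in fact preserves each such component: the components are intervals ordered along $\R$, and $h$ is an orientation-preserving homeomorphism of $\R$ fixing the closed set $\fix(g)$ pointwise on its frontier — more precisely, since $h$ is compactly supported it fixes both ends of $\R$, so it cannot cyclically permute or reverse the order of the components, and an order-preserving bijection of a linearly ordered set that fixes something cofinal in both directions must fix each component setwise. (If $\fix(g)$ is a single point or otherwise degenerate one argues directly; the compact-support hypothesis on $h$ is what rules out a global shift of the components.) Now fix a component $J = (a,b)$ of $\R \setminus \fix(g)$. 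Both $g$ and $g^2$ restrict to $\Diff^r_+(\bar J)$ — identifying $\bar J$ with $I = [0,1)$ after possibly reversing orientation and sending $b$ to the open end (if $J$ is bounded, cut at one endpoint; the relevant point is that $g|_J$ is fixed-point-free on the interior) — and $h|_{\bar J}$ commutes with $g^2|_{\bar J}$. By Corollary \ref{sz cor}, $C_{\Diff^r_+(\bar J)}(g|_{\bar J}) = C_{\Diff^r_+(\bar J)}(g^2|_{\bar J})$, so $h|_{\bar J}$ commutes with $g|_{\bar J}$. This holds on every component, and on $\fix(g)$ both $h g$ and $g h$ agree trivially, so $hg = gh$ on all of $\R$.

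The main obstacle I anticipate is the bookkeeping in the second paragraph: showing that a compactly supported $h$ commuting with $g^2$ must preserve each component of $\R \setminus \fix(g)$ individually, rather than permuting them. The clean way is to use that $\fix(g)$ is a closed subset of $\R$ containing a neighborhood of $+\infty$ and of $-\infty$ (because $g$ is compactly supported), that $h$ is an orientation-preserving homeomorphism fixing this set (setwise), and that such an $h$ induces an order-preserving permutation of the components which must therefore be trivial — any nontrivial order-preserving self-bijection of the components would have to move the unbounded behavior, contradicting that $h$ is the identity near $\pm\infty$. A minor subtlety is handling components $J$ where $\bar J$ is not literally half-open; there one cuts at an endpoint in $\fix(g)$, exactly as in the proof of Proposition \ref{simpler prop} via Observation \ref{embed observation}, reducing to the half-open interval so that Corollary \ref{sz cor} and Szekeres' theorem apply. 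None of these steps is deep, but they need to be assembled carefully.
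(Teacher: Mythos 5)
There is a genuine gap in the second paragraph: the claim that $h \in C(g^2)$ must preserve each component of $\R \setminus \fix(g)$ individually is false, and the justification you give for it does not work. Compact support of $h$ only rules out permutations that disturb the unbounded ends; it does not rule out a nontrivial order-preserving permutation of components that accumulate inside a compact set. Concretely, take $\fix(g) = (-\infty,0] \cup \{a_n : n \in \Z\} \cup \{0,1\} \cup [1,\infty)$ with $a_n \to 0$ as $n \to -\infty$ and $a_n \to 1$ as $n \to +\infty$; the components $(a_n, a_{n+1})$ are order-isomorphic to $\Z$, and a diffeomorphism supported in $[0,1]$ can shift them by one. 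If $g$ is chosen so that its restrictions to these components are all conjugate via such a shift $h$, then $h$ commutes with $g$ (hence with $g^2$) yet permutes the components nontrivially. So the case of non-invariant components cannot be excluded; it must be handled.

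The paper's proof handles exactly this case, and the missing ingredient is the \emph{uniqueness} clause in Szekeres' theorem (Theorem \ref{szek}). If $h$ maps a component $J$ to $J'$ and commutes with $g^2$, then $h$ conjugates $g^2|_J$ to $g^2|_{J'}$, hence conjugates the unique Szekeres flow $\{g^t_J\}$ with time-one map $g^2|_J$ to the unique such flow on $J'$. Since $g|_J$ lies in that flow and squares to $g^2|_J$, it is the time-$\tfrac{1}{2}$ map, and conjugation by $h$ must send it to the time-$\tfrac{1}{2}$ map on $J'$, which is $g|_{J'}$. Thus $hgh^{-1} = g$ on $J'$ even when $J' \neq J$. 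Your remaining steps ($\fix(g) = \fix(g^2)$, the reduction to Corollary \ref{sz cor} on invariant components, and the cutting at an endpoint to land in $[0,1)$) are fine, but without the flow-uniqueness argument the proof is incomplete.
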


\begin{proof}  
That $C(g) \subset C(g^2)$ is immediate.  We prove the reverse containment also holds.  

Let $g \in \Diff^r_c(\R)$ and suppose $r \geq 2$.  Then $\fix(g) \neq \emptyset$ and for any connected component $J$ of $\R \setminus \fix(g)$ we can apply Szekeres' lemma to $g|_{\bar{J}}$.  Note that since $g$ is orientation-preserving, $\fix(g) = \fix(g^2)$.  

Consider a connected component $J$ of $\R \setminus \fix(g)$.  By Corollary \ref{sz cor}, any diffeomorphism that leaves $J$ invariant and commutes with $g^2$ on $J$ also commutes with $g$ on $J$.  A $C^r$-diffeomorphism $h$ that commutes with $g^2$ but does \emph{not} leave $J$ invariant must map $J$ to another connected component of $\R \setminus (\fix(g))$, say $J'$.  Moreover, $h$ must conjugate the action of $g^2$ on $J$ to the action of $g^2$ on $J'$.  By uniqueness of the flow from Szerkes' theorem, if $g^t_J$ is the flow on $J$ that commutes with $g^2|_J$ then $h g^t_J h^{-1}$ is the flow on $J'$ that commutes with $g^2|_{J'}$.  It follows that $h$ conjugates the action of $g = (g^2)^{\frac{1}{2}}$ on $J$ to the action of $g$ on $J'$.   In other words, for any $x \in J$, we have $hg(x) = gh(x)$, i.e. $h$ commutes with $g$ on $J$.   Since this is true for any component of $\R \setminus \fix(g)$, it follows that $h$ commutes with $g$.  
 
 \end{proof}

Now we can easily prove Theorem \ref{main cor} if $M_2 = \R$.
\begin{proof}

Suppose that $M_1$ is a manifold of dimension $n \geq 2$ and that $\Phi$ is an injective homomorphism $\Diff_c^r(M_1)_0 \to \Diff_c^p(\R)$.  Let $B^n$ be an open ball in $M_1$ and consider the induced injective homomorphism $\Diff_c^r(B^n) \to \Diff_c^r(\R)$ given by restricting the domain of $\Phi$ to elements supported in $B^n$.   We claim that no such homomorphism exists.  
 
To see this, we will construct a diffeomorphism $g \in \Diff_c^r(B^n)$ such that 
$$C(g^2)_{\Diff_c^r(B^n)} \neq C(g)_{\Diff_c^r(B^n)},$$ 
i.e. such that there is some $h \in \Diff_c^r(B^n) \subset \Diff_c^r(M_1)$ where $[h, g^2] = 1$ but $[h, g] \neq 1$.  In this case we will have that $[\Phi(h), \Phi(g^2)] = 1$ but $[\Phi(h), \Phi(g)] \neq 1$ in $\Diff_c^r(\R)$, contradicting Lemma \ref{cent lemma}.  

To construct $g$, choose any diffeomorphism that acts as rotation by $\pi$ on a small cylinder $B^2 \times [0,1]^{n-2}$ inside $B^n$, and has trivial centralizer outside $D^2 \times [0,1]^{n-2}$.  Precisely, we want the restriction of $g$ to $B^2 \times [0,1]^{n-2}$ to be given by $(r e^{i \theta},\, x) \mapsto (r e^{i (\pi + \theta)},\, x)$. 
Now let $h$ be any diffeomorphism supported inside $D^2 \times [0,1]^{n-2}$ that does not commute with $g$.  By construction, the restriction of $g^2$ to $D^2 \times [0,1]^{n-2}$ is trivial, so $h$ commutes with $g^2$, as desired.  

\end{proof}

Unfortunately, there \emph{are} elements $g \in \Diff^r(S^1)$ such that $C(g) \neq C(g^2)$ -- finite order rotations are one example, but there are infinite order such elements as well.  Thus, our proof above won't work if $M_2 = S^1$.  
To prove Theorem \ref{main cor} when $M_2 = S^1$, we'll use the description of $\Diff^r_c(\R)$ actions from Theorem \ref{main}.  

\begin{proof}[Proof of Theorem \ref{main cor}, $M_2 = S^1$ case]
 
Suppose that $M_1$ is a manifold of dimension $n \geq 2$ and that $\Phi$ is an injective homomorphism $\Diff_c^r(M_1)_0 \to \Diff_c^p(S^1)$. We want to produce a contradiction.  
We claim that it is enough to show that there is no injective homomorphism $\Diff^r_c(B^2) \to \Diff^p_+(S^1)$. (Recall $B^2$ is the open 2-dimensional disc).  This is because of the following easy lemma.

\begin{lemma} \label{embed lemma}
$\Diff^r_c(B^2)$ embeds as a subgroup of $\Diff^r_c(M_1)$ for any manifold $M_1$ of dimension at least two.
\end{lemma}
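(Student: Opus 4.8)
The plan is to exhibit the embedding as a composition of the two most elementary constructions from Example \ref{easy}: a diagonal-type extension across a \emph{compact} product factor (Example \ref{easy}(d)), followed by extension by the identity across an open chart (Example \ref{easy}(a)). Write $n = \dim(M_1) \geq 2$. When $n = 2$ there is nothing to do: $M_1$ contains an open set diffeomorphic to $B^2$, and one extends compactly supported diffeomorphisms by the identity outside it. So assume $n \geq 3$, and reduce the problem to producing an open subset of $M_1$ diffeomorphic to $S^{n-2} \times B^2$ together with the natural map $\Diff^r_c(B^2) \to \Diff^r_c(S^{n-2}\times B^2)$, $g \mapsto \mathrm{id}_{S^{n-2}} \times g$.

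The first step is to find the open set $U \subset M_1$. Working inside a coordinate chart, identified with $\R^n = \R^{n-1} \times \R$, take the standard unit sphere $S^{n-2} = \{(y,0) : y \in \R^{n-1},\ \|y\| = 1\}$ and its neighborhood
\[
U := \bigl\{ (y,t) \in \R^{n-1}\times\R : (\|y\|-1)^2 + t^2 < \tfrac14 \bigr\}.
\]
The map $(y,t) \mapsto \bigl(y/\|y\|,\,(\|y\|-1,\,t)\bigr)$ identifies $U$ diffeomorphically with $S^{n-2} \times B^2$; equivalently, $U$ is a tubular neighborhood of the (unknotted) round $(n-2)$-sphere in the chart, whose normal bundle is trivial because the normal line bundle of $S^{n-2}$ in $\R^{n-1}$ admits the nowhere-vanishing outward normal section and the normal line bundle of $\R^{n-1}$ in $\R^n$ is trivial. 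Thus $U$ is an open subset of $M_1$ diffeomorphic to $S^{n-2}\times B^2$, and by Example \ref{easy}(a) there is an injection $\Diff^r_c(U) \hookrightarrow \Diff^r_c(M_1)$ by extension by the identity.

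It then remains to check that $\iota\colon \Diff^r_c(B^2) \to \Diff^r_c(S^{n-2}\times B^2)$, $\iota(g) = \mathrm{id}_{S^{n-2}}\times g$, is a well-defined injective homomorphism. It is visibly a homomorphism and injective; the image lies in $\Diff^r_c$ because $\supp(\iota(g)) = S^{n-2}\times\supp(g)$ is compact; and $\iota(g)$ is orientation-preserving and isotopic to the identity because a compactly supported isotopy $g_t$ from the identity to $g$ gives the isotopy $\mathrm{id}_{S^{n-2}}\times g_t$. Composing $\iota$ with $\Diff^r_c(U)\hookrightarrow\Diff^r_c(M_1)$ finishes the proof.

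I do not expect any real obstacle here: the only point worth flagging is why the truly naive attempt fails. Writing $B^n \cong B^2 \times B^{n-2}$ and sending $g$ to $g \times \mathrm{id}$ does not work, because $\supp(g)\times B^{n-2}$ is not compact in $B^n$ (and the map does not even patch smoothly to the identity outside it). Replacing the open factor $B^{n-2}$ by the closed manifold $S^{n-2}$ repairs both defects simultaneously, and once that substitution is made everything above is routine verification.
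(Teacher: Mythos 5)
Your proof is correct and follows essentially the same route as the paper: handle $\dim M_1 = 2$ by extension by the identity, and for $n \geq 3$ embed $\Diff^r_c(B^2)$ into $\Diff^r_c(B^2 \times S^{n-2})$ via $g \mapsto g \times \mathrm{id}$ and then realize $B^2 \times S^{n-2}$ as an open submanifold of $M_1$. Your explicit tubular-neighborhood construction of that open submanifold inside a chart is just a more detailed justification of a step the paper takes for granted.
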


\begin{proof}[Proof of Lemma \ref{embed lemma}]
This is trivial if $\dim(M_1) = 2$, since $B^2$ embeds in $M_1$ as an open submanifold.  If $\dim(M_1) = n >2$, we can embedd $\Diff^r_c(B^2)$ in $\Diff^r_c(B^2 \times S^{n-2})$ via $g \cdot (b, s) = (g(b), s)$.   Then take any embedding of $B^2 \times S^{n-2}$ in $M_1$ as an open submanifold.  This defines an embedding $\Diff^r_c(B^2 \times S^{n-2}) \hookrightarrow \Diff^r_c(M_1)$.
\end{proof}

Thus, we have reduced the proof of Theorem \ref{main cor} to the following: 

\begin{lemma} \label{nohomo}
There is no injective homomorphism $\Diff^r_c(B^2) \to \Diff^p_+(S^1)$
\end{lemma}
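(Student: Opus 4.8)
The plan is to argue by contradiction, reducing to the case $M_2=\R$ settled above. The point is that $\Diff^r_c(B^2)$ contains a copy $K$ of $\Diff^r_c(\R)$ positioned so that its centralizer in $\Diff^r_c(B^2)$ still contains a copy of $\Diff^r_c(B^2)$; Theorem~\ref{main} forces $\Phi(K)$ to act on $S^1$ in the rigid topologically diagonal way, and this constrains the centralizer of $\Phi(K)$ in $\Diff^p_+(S^1)$ so tightly that a copy of $\Diff^r_c(B^2)$ cannot inject into it.

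Concretely, suppose $\Phi\colon\Diff^r_c(B^2)\to\Diff^p_+(S^1)$ is injective. Write $B^2\cong\R^2$ and fix the open annulus $A=\{\,1<|z|<2\,\}$, so that $B^2\setminus\overline A$ contains the open disk $D=\{\,|z|<1\,\}\cong B^2$. Since $A\cong\R\times S^1$, the construction of Example~\ref{easy}(d) gives an embedding of $\Diff^r_c(\R)$ onto a subgroup $K\le\Diff^r_c(B^2)$ supported inside $A$, with $g$ acting by $(x,s)\mapsto(g(x),s)$. Because $D$ and $A$ are disjoint, $\Diff^r_c(D)\cong\Diff^r_c(B^2)$ commutes with $K$, so it is a subgroup of $C_{\Diff^r_c(B^2)}(K)$, and $\Phi$ restricts to an injection on it.

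Now I would analyze $C:=C_{\Diff^p_+(S^1)}(\Phi(K))$. By Theorem~\ref{main}, $\Phi|_K$ is topologically diagonal, so $\Phi(K)$ acts on $S^1$ with support a disjoint union $X=\bigsqcup_{i=1}^m f_i(\R)$ of open arcs, which are exactly its connected components; here $m$ is finite, by the argument of Proposition~\ref{finitelymany} applied after cutting $S^1$ at a point of $\fix(\Phi(K))$ (nonempty by Lemma~\ref{notempty}, with all derivatives of $\Phi(K)$ vanishing there as in the proof of Corollary~\ref{sflow}). Any $h\in C$ preserves $\fix(\Phi(K))$ by Fact~\ref{easyfact}, hence permutes the $m$ arcs; and if $h$ fixes $f_i(\R)$ setwise, then $h|_{f_i(\R)}$ centralizes $\Phi(K)|_{f_i(\R)}=f_i\,\Diff^r_c(\R)\,f_i^{-1}$, a transitive conjugate of $\Diff^r_c(\R)$, so $h|_{f_i(\R)}=\mathrm{id}$ because the centralizer of $\Diff^r_c(\R)$ in $\Homeo(\R)$ is trivial. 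Thus there is an exact sequence
$$1\longrightarrow\Diff^p_c\!\big(S^1\setminus\overline X\big)\longrightarrow C\longrightarrow S_m,$$
with $S^1\setminus\overline X$ a disjoint union of at most $m$ open intervals, each diffeomorphic to $\R$. Since $\Diff^r_c(D)\cong\Diff^r_c(B^2)$ is simple and $\Phi(\Diff^r_c(D))\le C$, the composite $\Diff^r_c(D)\to S_m$ has finite image, hence is trivial; so $\Phi(\Diff^r_c(D))$ lies in $\Diff^p_c(S^1\setminus\overline X)$, which (being nontrivial) is nonempty and therefore isomorphic to a finite product $\prod_{l=1}^k\Diff^p_c(\R)$. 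Simplicity of $\Diff^r_c(D)$ once more forces one coordinate projection $\Diff^r_c(D)\to\Diff^p_c(\R)$ to be injective, yielding an injection $\Diff^r_c(B^2)\hookrightarrow\Diff^p_c(\R)$ and contradicting the $M_2=\R$ case established above. With Lemma~\ref{embed lemma}, this completes the proof of Theorem~\ref{main cor}.

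The step I expect to demand the most care is the description of $C_{\Diff^p_+(S^1)}(\Phi(K))$: one must establish that a topologically diagonal embedding of $\Diff^r_c(\R)$ into $\Diff^p_+(S^1)$ involves only finitely many arcs — obtained by cutting $S^1$ at a global fixed point and invoking Proposition~\ref{finitelymany} — and that a diffeomorphism centralizing $\Phi(K)$ can only permute these arcs, acting as the identity on any it fixes, which reduces to the (elementary but essential) fact that $\Diff^r_c(\R)$ has trivial centralizer in $\Homeo(\R)$. The argument also invokes simplicity of $\Diff^r_c(B^2)$ twice; this holds for $r\ne 3$, the critical case $r=3$ resting on the corresponding perfectness/simplicity results in the critical regularity, as already in the $M_2=\R$ case. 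Everything else — the choice of $A$ and $D$, the product embedding producing $K$, and the final appeal to the $M_2=\R$ case — is routine.
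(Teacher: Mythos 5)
Your route is genuinely different from the paper's. You compute the centralizer of $\Phi(K)$ in $\Diff^p_+(S^1)$ as an extension of a subgroup of $S_m$ by the group of diffeomorphisms that are the identity on the arcs, and then use simplicity of $\Diff^r_c(B^2)$ twice to push the commuting copy $\Phi(\Diff^r_c(D))$ into a single complementary interval, where the $M_2=\R$ argument applies. The paper instead never touches the normal subgroup structure of $\Diff^r_c(B^2)$: it takes a single explicit $g$ acting as rotation by $\pi/N!$ on a cylinder disjoint from $\supp(H)$ (with $N$ the number of components of $\fix(\Phi(H))$), so that $\Phi(g)^{N!}$ automatically preserves each component and $g^{2N!}$ is trivial on the cylinder, and then contradicts Lemma~\ref{cent lemma} using one explicit non-commuting pair. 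Your structural description of $C_{\Diff^p_+(S^1)}(\Phi(K))$ is correct in substance and arguably more illuminating; the ingredients you use for it (finitely many arcs via Proposition~\ref{finitelymany} after cutting at a global fixed point, and triviality of the centralizer of a transitive conjugate of $\Diff^r_c(\R)$ on an arc it preserves) are exactly the ones the paper uses.

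The genuine gap is the appeal to simplicity of $\Diff^r_c(B^2)$. For $\dim=2$ the critical Mather regularity is $\dim+1=3$, so simplicity (indeed even perfectness) of $\Diff^3_c(B^2)$ is an open problem, and $r=3$ is allowed by the hypotheses. Your parenthetical that the critical case rests on such results ``as already in the $M_2=\R$ case'' is not accurate: the paper's $M_2=\R$ argument uses only two explicit elements $g,h$ with $[h,g^2]=1$, $[h,g]\neq 1$ together with Lemma~\ref{cent lemma} in the target, and never needs $\Diff^r_c(B^n)$ to be simple or perfect. So as written your proof covers $r\geq 4$ but not $r=3$; the fix is essentially to replace both uses of simplicity by the paper's power trick (kill the map to $S_m$ by passing to $N!$-th powers of two explicit elements rather than by simplicity, and derive the contradiction from a non-commuting pair in one factor rather than from injectivity of a coordinate projection). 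Two smaller imprecisions, both harmless: the kernel of $C\to S_m$ is the group of diffeomorphisms that are the identity on $\overline X$, which is a product of groups $\Diff^p_+(\overline J)$ of diffeomorphisms of \emph{closed} intervals fixing their endpoints, not $\prod\Diff^p_c(\R)$; correspondingly the final contradiction should be with Lemma~\ref{cent lemma} extended to $\Diff^p_+[0,1]$ (its proof goes through verbatim, and this is exactly how the paper concludes) rather than with the literal statement of the $M_2=\R$ case.
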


\noindent \textit{Proof of Lemma \ref{nohomo}}. 
First, we show that $\Diff_c^r(B^2)$ has a subgroup $H$ isomorphic to $\Diff_c^r(\R)$.  To do this, let $\tau: \Diff_c^r(\R) \to \Diff_c^r(\R \times S^1)$ be the map given by $\tau(g)(x, s) \mapsto (g(x), s)$, and let $f: (\R \times S^1) \to B^2$ be a $C^r$ embedding.   Then $f_{*} \tau: \Diff_c^r(\R) \to \Diff_c^r(B^2)$ is injective with image $H \cong \Diff_c^r(\R)$.  
See Figure \ref{figRinB}.

 \begin{figure*}
   \labellist 
  \small\hair 2pt
   \pinlabel $h$ at 85 164 
         \endlabellist
  \centerline{
    \mbox{\includegraphics[width=2.5in]{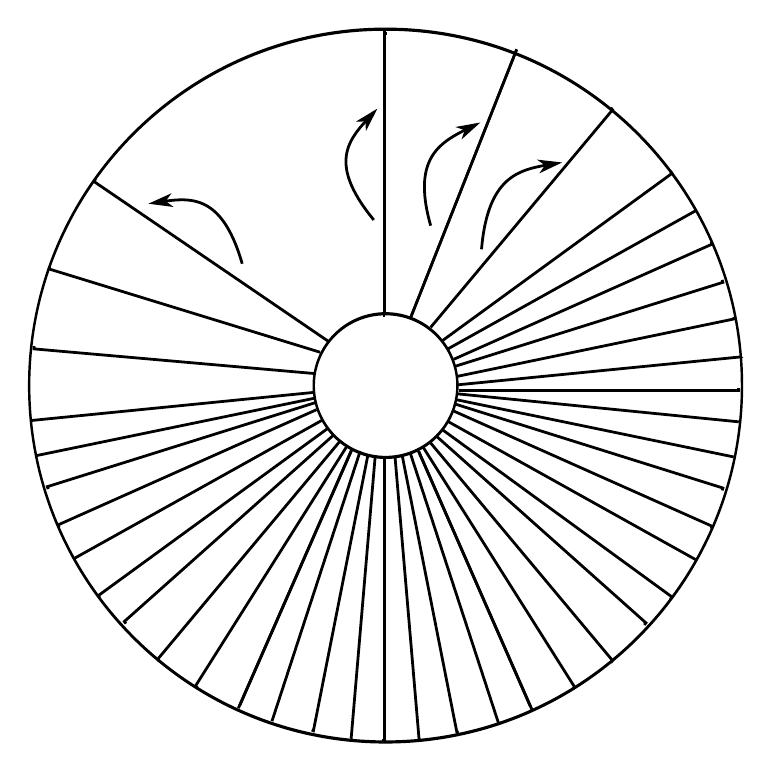}}}
 \caption{$\Diff_c^r(\R)$ embedds in $\Diff_c^r(B)$ by acting on the radii of an annulus}
  \label{figRinB}
  \end{figure*}

Suppose for contradiction that $\Phi: \Diff^r_c(B^2) \to \Diff^p_+(S^1)$ is injective.  
By Theorem \ref{main}, $\Phi(H)$ is a topologically diagonal embedding, and by Proposition \ref{finitelymany}, $\fix(\Phi(H))$ has finitely many connected components.  Let $N$ be the number of components of $\fix(\Phi(H))$.  Note that if $\phi$ is any diffeomorphism of $S^1$ that permutes the components of $S^1 \setminus \fix(\Phi(H))$, then $\phi^{N!}$ leaves each component of $S^1 \setminus \fix(\Phi(H))$ invariant.  

We will find a diffeomorphism $g \in \Diff_c^r(B^2)$ with support disjoint from $\supp(H)$, and such that the centralizer of $g^{2(N!)}$ is not equal to the centralizer of $g^{N!}$.  To construct such a $g$, we may use a diffeomorphism that acts as rotation by $\pi/N!$ on a small cylinder disjoint from $\supp(H)$, similar to the construction in the $M_2 = \R$ case above.  Let $g'$ be a diffeomorphism supported on the cylinder that does not commute with $g^{N!}$.  As in our construction for the $M_2 = \R$ case, $g'$ commutes with $H$ and with $g^{2(N!)}$ but not with $g^{N!}$.  

Since $g$ and $H$ commute, $\Phi(g)$ commutes with $\Phi(H)$, and $\Phi(g)^{N!}$ leaves invariant each subinterval or isolated point of $\fix(\Phi(H))$.  Moreover, since $\Phi(g)^{N!}$ commutes with $\Phi(H)$ and $H$ is nonabelian, $\Phi(g)^{N!}$ must be supported on the subintervals fixed by $\Phi(H)$ (we are using the proof of Proposition \ref{simpler prop} again here).  In particular, we can regard $\Phi(g)^{N!}$ and $\Phi(g)^{2N!}$ as diffeomorphisms of an interval, both with the same support.  

Finally, consider $\Phi(g')$.  Since $\Phi(g')$ commutes with $\Phi(H)$, we also have that  $\Phi(g')^{N!}$ leaves invariant each isolated point or subinterval of $\fix(\Phi(H))$.  In particular, it fixes some point and we may regard $\Phi(g')^{N!}$
as a diffeomorphism of the interval.  Then $\Phi(g')^{N!}$ commutes with $\Phi(g)^{2N!}$ but not with $\Phi(g)^{N!}$, contradicting Lemma \ref{cent lemma}.   

This completes the proof of Lemma \ref{nohomo} and the proof of Theorem \ref{main cor}.  

\end{proof}

\section{Compact support, simplicity, and smoothness} \label{SSS}

We remark on some of the hypotheses of our theorems and ask about generalizations.  

\boldhead{Compact support}
As mentioned in the introduction, the group $\Diff^r_c(M)$ is simple for $r \neq \dim(M)+1$.  
We believe that it should be possible to prove an analog of Theorem \ref{main} for $\Diff^r(\R)_0$ rather than $\Diff^r_c(\R)$ using the following theorem of Schweitzer, which classifies all normal subgroups of $\Diff^r(\R)_0$.  

\begin{theorem}[Schweitzer, \cite{Sc}]
Let $r \geq 3$.  There are only three proper normal subgroups of $\Diff^r(\R)_0$
\begin{enumerate}[(a)] 
\item The group of diffeomorphisms coinciding with the identity in a neighborhood of $\infty$
\item Those coinciding with the identity in a neighborhood of $-\infty$
\item  $\Diff^r_c(\R)$.
\end{enumerate}
\end{theorem}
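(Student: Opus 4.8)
The plan is to reduce the classification to Mather's simplicity theorem for $\Diff^r_c(\R)$ (available since $r \geq 3$, so $r \neq \dim(\R)+1$) together with simplicity of the two groups of germs at the ends of $\R$.

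Write $G = \Diff^r(\R)_0$; since $x \mapsto (1-t)x + tg(x)$ is an isotopy through diffeomorphisms, $G$ is in fact all of $\Diff^r_+(\R)$. Let $K^+$, resp. $K^-$, be the subgroup of diffeomorphisms whose support is bounded above, resp. bounded below, so that $K^+ \cap K^- = \Diff^r_c(\R) =: N_c$; these three (obviously normal) subgroups are the claimed proper normal subgroups. Let $\mathcal{G}^+$, $\mathcal{G}^-$ be the groups of germs at $+\infty$, $-\infty$ of elements of $G$, and let $\rho = (\rho^+,\rho^-) \colon G \to \mathcal{G}^+ \times \mathcal{G}^-$ be the germ homomorphism. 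Its kernel is exactly $N_c$. I would check that $\rho$ is surjective: given germ representatives on $(M,\infty)$ and $(-\infty,-M)$, glue them across $[-M,M]$ by an orientation-preserving $C^r$-diffeomorphism matching the prescribed $r$-jets at $\pm M$, which is a routine interpolation with no obstruction on an interval. Hence $G/N_c \cong \mathcal{G}^+ \times \mathcal{G}^-$, under which $K^+/N_c \cong \{1\} \times \mathcal{G}^-$ and $K^-/N_c \cong \mathcal{G}^+ \times \{1\}$.

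Next I would show that every nontrivial normal $N \trianglelefteq G$ contains $N_c$. Choose $g \in N$, $g \neq \mathrm{id}$, and a nonempty open interval $B$ with $g(B) \cap B = \emptyset$. For any $h$ supported in $B$, the commutator $[g,h] = g(hg^{-1}h^{-1})$ lies in $N$ by normality and is supported in the compact set $B \cup g(B)$, so $[g,h] \in N \cap N_c$; moreover $[g,h] \neq 1$ for some such $h$, since otherwise $g$ would conjugate each element of $\Diff^r_c(B)$ to something supported in the disjoint set $g(B)$ and equal to it, forcing $\Diff^r_c(B)$ to be trivial. Thus $N \cap N_c$ is a nontrivial normal subgroup of $N_c = \Diff^r_c(\R)$, which is simple by Mather's theorem, so $N \supseteq N_c$.

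Finally I would classify the normal subgroups of $\mathcal{G}^+ \times \mathcal{G}^-$: if both factors are simple and nonabelian, Goursat's lemma yields exactly the four $1$, $\mathcal{G}^+ \times 1$, $1 \times \mathcal{G}^-$, $\mathcal{G}^+ \times \mathcal{G}^-$ (the only point to rule out is that the graph of an isomorphism $\mathcal{G}^+ \to \mathcal{G}^-$ is never normal, which follows from the centers being trivial). Pulling these back through $\rho$ gives $N \in \{N_c, K^-, K^+, G\}$, which is the statement. The step I expect to be the main obstacle is the simplicity of the germ group $\mathcal{G}^+$ (and symmetrically $\mathcal{G}^-$): after the substitution $y = 1/x$ it is the group of germs at $0^+$ of $C^r$-diffeomorphisms of $(0,\varepsilon)$ extending by $0 \mapsto 0$, with \emph{no} regularity imposed at the endpoint, and it is precisely this lack of a derivative at $0$ that kills the derivative-at-$0$ homomorphism obstructing simplicity in the boundary case. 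Establishing simplicity here is the analogue, for an end of $\R$, of Mather's theorem that $\Diff^r_c$ is simple, and requires the fragmentation and perfectness machinery of Mather--Thurston type adapted to germs; this is the second, essential use of the hypothesis $r \geq 3$.
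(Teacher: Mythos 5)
This statement is quoted in the paper as a theorem of Schweitzer (reference \cite{Sc}) and is not proved there, so there is no in-paper argument to compare against; I can only assess your outline on its own terms. The reduction you give is correct and cleanly organized: the commutator trick (displace a bounded interval $B$ off itself by a nontrivial $g\in N$, so that $[g,h]\in N\cap\Diff^r_c(\R)$ is nontrivial for suitable $h$ supported in $B$) shows every nontrivial normal subgroup contains $\Diff^r_c(\R)$ once Mather's simplicity theorem is invoked; the germ homomorphism identifies $G/\Diff^r_c(\R)$ with $\mathcal{G}^+\times\mathcal{G}^-$ (the gluing argument for surjectivity is indeed routine on an interval); and the Goursat analysis of normal subgroups of a product of two centerless simple groups is standard.

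The genuine gap is exactly where you flag it, and it is not a peripheral technicality: given your reduction, the simplicity of the germ groups $\mathcal{G}^{\pm}$ is \emph{equivalent} to the theorem. If $\mathcal{G}^+$ had a proper nontrivial normal subgroup $M$, then $\rho^{-1}(M\times\mathcal{G}^-)$ would be a fourth proper normal subgroup of $\Diff^r(\R)_0$ lying strictly between $K^-$ and $G$. So the entire content of the statement is concentrated in the step you defer to ``fragmentation and perfectness machinery of Mather--Thurston type adapted to germs.'' That adaptation is nontrivial: fragmentation in the usual sense is useless for germs at an end (every element is already ``supported near infinity''), and one instead needs arguments that push supports out toward the end and absorb infinite compositions (Schweitzer's portability and Mather-style rolling-up constructions). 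This is the actual theorem-bearing work of Schweitzer's paper. As written, your proposal is a correct skeleton with the load-bearing lemma asserted rather than proved.
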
 

\begin{remark} This is actually a special case of a much more general theorem which classifies normal subgroups of $\Diff^r(\R^n)_0$, for any $r \geq 1$, such that $r \neq n+1$.  
\end{remark}

Thus, any non-injective homomorphism $\Phi: \Diff^r(\R)_0 \to \Diff^p(\R)_0$ has kernel equal to one of the subgroups above.  We conjecture that no such homomorphism exists.  
\medskip 

Note also that we did not make very heavy use of the fact that the \emph{target} group was a group of compactly supported diffeomorphisms (or even orientation-preserving diffeomorphisms).  It may be possible to adapt our proofs for homomorphisms $\Phi: \Diff^r_c(\R) \to \Diff^r(\R)$, starting with a modified version of Proposition \ref{simpler prop}.

\boldhead{Manifolds with boundary} 
Our results are not true for manifolds with boundary.  For example, $\Diff^r[0,1]$ admits nontrivial homomorphisms to $\R$ given by taking the derivative at $0$ or at $1$. These can be used to build homomorphisms of $\Diff^r[0,1]$ into $\Diff^r[0,1]$ or $\Diff^r_c(\R)$that are not topologically diagonal.  

However, I do not know whether it is possible to construct any counterexamples without using derivatives: 
\begin{question}  
Let $G^{\infty}$ be the group of $C^{\infty}$ diffeomorphisms of the interval that are infinitely tangent to the identity at 0 and at 1.  Do there exist homomorphisms $\Phi: G^{\infty} \to \Diff^r_c(\R)$ that are not topologically diagonal? 
\end{question}

\boldhead{Smoothness} 
There are also counterexamples to some of our tools when we remove assumptions on smoothness.  In Section \ref{comm} on commuting subgroups, we assumed all diffeomorphisms were of class at least $C^2$.  Although $C^{1+bv}$ would have sufficed, Kopell's Lemma and Denjoy's theorem do not hold for $C^1$ diffeomorphisms.  Namely, there are fixed point free $C^1$ diffeomorphisms of the interval commuting with $C^1$ diffeomorphisms that have fixed points, as well as $C^1$ diffeomorphisms of the circle with irrational rotation number that are not conjugate to irrational rotations (see \cite{Na}).  
Note also that the group $\Diff^{1+bv}(S^1)$ is known to be \emph{not} simple (see \cite{Ma3}) and whether the group $\Diff^2(S^1)$ is simple is unknown!  In particular, $\Diff^{1+bv}(S^1)$ admits non topologically diagonal and even non-continuous homomorphisms to other groups of diffeomorphisms, and it is possible that $\Diff^2(S^1)$ does as well.  

However, working in the category of \emph{homeomorphisms} rather than diffoemorphisms, E. Militon has recently obtained results in the same sprit as ours, using different techniques.  See \cite{Militon 1} and \cite{Militon 2}.  

\boldhead{Higher dimensions}
We conclude with a final remark on one of the difficulties in extending the results of this paper to manifolds of dimension $n>1$.  Our methods here relied heavily on properties of commuting subgroups, namely, the H\"older-Kopell theorey and Proposition \ref{simpler prop}.  The following example shows that Proposition \ref{simpler prop} does not hold in higher dimensions:

\begin{example}[Large centralizers in higher dimensions]
Consider the following two subgroups of $\Diff^r(\R^2)$. 
$$G:= \{f \in \Diff^r(\R^2) : f(x,y) = (f_1(x), y) \text{ for some } f_1 \in \Diff^r(\R) \}$$ 
$$H:= \{f \in \Diff^r(\R^2) : f(x,y) = (x, f_2(y)) \text{ for some } f_2 \in \Diff^r(\R) \}$$
These commute, i.e. $H$ is a subset of the centralizer of $G$, they are both ``large" in the sense that they are infinite dimensional, non-abelian, etc. and yet neither one fixes any nonempty subset of $\R^2$.  
\end{example}

Similar subgroups can be found in $\Diff^r(\T^2)$, in $\Diff^r(\R^n)$ and $\Diff^r(\T^n)$ for the $n$-dimensional torus.  However, we note that the groups $G$ and $H$ above are \emph{not} groups of compactly supported diffeomorphisms.  

\begin{question} Do there exist examples of large (for any reasonable definition of ``large") commuting subgroups in $\Diff^r_c(\R^n)$ that do not fix any open set?  Do there exist examples in $\Diff^r_c(M^n)$ for all $n$-manifolds?
\end{question}

A negative answer to this question would be the first step towards finding an algebraic-topological correspondence for manifolds of higher dimension, analogous to the correspondence we found here for 1-manifolds.


Dept. of Mathematics 

University of Chicago 

5734 University Ave. Chicago, IL 60637 

E-mail: mann@math.uchicago.edu

\newpage


\begin{thebibliography}{99}


\bibitem{Ba} A. Banyaga
\textit{The Structure of Classical Diffeomorphism Groups}. 
Mathematics and Its Applications, Kluwer Acad. Publ. (1997)

\bibitem{Ba2} A. Banyaga
\textit{On isomorphic classical diffeomorphism groups I}. 
Proc. Am. Math. Soc. 98 (1986) 113-118

\bibitem{FF1} B. Farb and J. Franks
\textit{Groups of homeomorphisms of one-manifolds, I: actions of nonlinear groups}
Preprint: arXiv:math/0107085v2

\bibitem{FF} B. Farb and J. Franks
\textit{Group actions on 1-manifolds, II: Extensions of H\"olderÕs Theorem}.
Trans. AMS 355 no. 11 (2003) 4385-4396 

\bibitem{Fi} R. Filipkiewicz, 
\textit{Isomorphisms between diffeomorphism groups}. 
Ergodic Theory Dynamical Systems 2 (1982), 159-171

\bibitem{Gh} E. Ghys
\textit{Prolongements des diff\'eomorphismes de la sph\`ere}.
L'Enseignement Math. 37 (1991) 45-59

\bibitem{Hu} S. Hurtado
\textit{Continuity of discrete homomorphisms of diffeomorphism groups}.
Preprint: arXiv:1307.4447 

\bibitem{Ko} N. Kopell, 
Commuting diffeomorphisms. In \textit{Global Analysis}. Proc. Sympos. Pure Math. XIV, Berkeley, Calif. (1968), 165-184

\bibitem{Ma1} J. Mather
\textit{Commutators of diffeomorphisms}.
Comment. Math. Helv. 49 (1974) 512-528

\bibitem{Ma2} J. Mather
\textit{Commutators of diffeomorphisms: II}.
Comment. Math. Helv. 50 (1975) 33-40

\bibitem{Ma3} J. Mather
\textit{Commutators of diffeomorphisms, III: a group which is not perfect}.
Comment. Math. Helv. 60 (1985) 122-124

\bibitem{Mat} S. Matsumoto
\textit{New proofs of theorems of Kathryn Mann}.
Preprint: arXiv:1308.5381v1

\bibitem{Mat2} S. Matsumoto
\textit{Actions of groups of diffeomorphisms on one-manifolds}.
Preprint: arXiv:1309.0618v1

\bibitem{Militon 1} E. Militon 
\textit{Continuous actions of the group of homeomorphisms of the circle on surfaces}
Preprint: arXiv:1211.0846v2

\bibitem{Militon 2} E. Militon
\textit{Actions of groups of homeomorphisms on one-manifolds}
Preprint: arXiv:1302.3737 

\bibitem{Na} A. Navas
\textit{Groups of Circle Diffeomorphisms}.
University of Chicago press, Chicago (2011)

\bibitem{Ho} O. H\"older 
\textit{Die Axiome der Quantit\"at und die Lehre vom Mass}. 
Ber.Verh. Sachs. Ges. Wiss. Leipzig, Math. Phys. C1. 53 (1901) 1-64 

\bibitem{Ry} T. Rybicki
\textit{Isomorphisms between groups of diffeomorphisms}.
Proceed. Am. Math. Soc. 123 No. 1 (1995) 303-310

\bibitem{Sc} P. Schweitzer
\textit{Normal subgroups of diffeomorphism and homeomorphism groups of $\R^n$ and other open manifolds}.
Preprint: arXiv:0911.4835v3

\bibitem{Ta} F. Takens. \textit{Characterization of a differentiable structure by its group of diffeomorphisms}.
Bol. Soc. Brasil. Mat. 10 (1979), 17-25

\vspace{.5in}

\end{thebibliography}
\end{document}